\documentclass[a4paper]{amsart}
\usepackage[top=2.2cm,left=2.8cm,right=2.8cm,bottom=2.8cm]{geometry}

\usepackage[foot]{amsaddr}
\usepackage{hyperref}
\hypersetup{
    colorlinks,
    citecolor=green,
    filecolor=black,
    linkcolor=blue,
    urlcolor=blue
}
\hypersetup{linktocpage}

\usepackage{cleveref}
\crefname{figure}{Fig.}{Figs.}
\Crefname{figure}{Fig.}{Figs.}
\usepackage{graphicx} 
\usepackage{amsmath,amsthm,amsfonts,amssymb}  
\numberwithin{equation}{section}
\usepackage[backend=biber, style=numeric, giveninits=true, doi=false, url=false, eprint=false, maxbibnames=5, minbibnames=5]{biblatex}

\usepackage{subfig} 
\usepackage{svg}
\usepackage{algpseudocode}
\usepackage{enumitem}
\usepackage{todonotes}
\usepackage{xparse}
\usepackage{tikz-cd}
\title[Grassmannian interpolation for parametric multiscale PDEs]
{Efficient localised model reduction for multiscale PDEs via Grassmannian interpolation}
\date{October 6, 2026}
\author{Christian Alber$^1$}
\address{\rm $^1$ Institute for Mathematics, Heidelberg University, 69047 Heidelberg, Germany}
\email[Christian Alber]{c.alber@uni-heidelberg.de}

\author{Markus Bachmayr$^2$}
\address{\rm $^2$ Institut f\"ur Geometrie und Praktische Mathematik, RWTH Aachen University, Im S\"usterfeld 2, 52072 Aachen, Germany}
\email[Markus Bachmayr]{bachmayr@igpm.rwth-aachen.de}

\author{Robert Scheichl$^1$}
\email[Robert Scheichl]{r.scheichl@uni-heidelberg.de}

\author{Huqing Yang$^2$}
\email[Huqing Yang]{yang@igpm.rwth-aachen.de}
\thanks{
M.B.\ and H.Y.\ acknowledge funding by Deutsche Forschungsgemeinschaft (DFG, German Research Foundation) -- project number 442047500/SFB 1481 \emph{Sparsity and Singular Structures}. } 

\renewcommand{\u}{\boldsymbol{u}}
\renewcommand{\v}{\boldsymbol{v}}
\newcommand{\w}{\boldsymbol{w}}

\newcommand{\Amax}{A_{\mathrm{max}}}
\newcommand{\Amin}{A_{\mathrm{min}}}

\renewcommand{\H}[1]{H^1(#1)}

\newcommand{\HzC}[1]{H^1_0(#1; \mathbb{C})}
\newcommand{\HzR}[1]{H^1_0(#1; \mathbb{R})}

\newcommand{\Hdir}[1]{H^{1}_{D}(#1;\R)}
\newcommand{\Linf}[1]{L^{\infty}(#1)}

\NewDocumentCommand{\linop}{m g}{%
  \IfNoValueTF{#2}
    {\mathcal{L}(#1)}
    {\mathcal{L}(#1,#2)}%
}

\newcommand{\htraceC}[1]{H^{1/2}(\partial #1; \mathbb{C})}

\newcommand{\htraceZC}[1]{H_0^{1/2}(\partial #1; \mathbb{C})}
\newcommand{\htraceZR}[1]{H_0^{1/2}(\partial #1; \mathbb{R})}
\newcommand{\trace}{\gamma}

\newcommand{\norm}[2]{\|{#1}\|_{#2}}

\newcommand{\di}{\operatorname{div}}

\newcommand{\orthProj}[1]{\mathbb{P}_{#1}}

\newcommand{\om}[1]{\omega_{#1}}
\newcommand{\omi}{\om{i}}
\newcommand{\os}[1]{\omega^{\ast}_{#1}}
\newcommand{\osi}{\os{i}}

\newcommand{\numDomPerDir}{N}
\newcommand{\dglob}{d}
\newcommand{\spatDim}{N}

\newcommand{\nlocGlob}{\boldsymbol{n}}
\newcommand{\nloc}{n}

\newcommand{\dloc}{d_{\mathrm{loc}}}

\NewDocumentCommand{\pu}{ g }{%
  \IfValueTF{#1}{P_{#1}^{\partial}}{P^{\partial}}%
}

\newcommand{\puOrig}{P}
\newcommand{\AharmEx}{E}
\newcommand{\gap}{\delta}

\newcommand{\specProj}{\Pi^{\nloc{}}}
\newcommand{\specProjR}{\mathcal{V}_{\nloc, \R}^{\partial}}
\newcommand{\specProjPuR}{\mathcal{V}_{\nloc, \R}}
\newcommand{\optSpace}{\mathcal{V}}
\newcommand{\optSpacePu}{\mathcal{V}_{\nloc}^{\partial}}
\newcommand{\optSpacePuOrig}{\mathcal{V}_{\nloc}}

\newcommand{\specProjRTan}{g^{\partial}}
\newcommand{\specProjPuRTan}{g}
\newcommand{\specProjRApprox}{\widehat{\mathcal{V}}_{\nloc, \R}^{\partial}}
\newcommand{\specProjPuRApprox}{\widehat{\mathcal{V}}_{\nloc, \R}}
\newcommand{\specProjRTanApprox}{g^{\partial}_S}
\newcommand{\specProjPuRTanApprox}{g_S}
\newcommand{\z}{\boldsymbol{z}}

\newcommand{\C}{\mathbb{C}}
\newcommand{\R}{\mathbb{R}}

\newcommand{\GrR}{\mathrm{Gr}}

\newcommand{\Yglob}{[-1,1]^{\dglob}}
\newcommand{\Yloc}{[-1,1]^{\dloc}}
\newcommand{\para}{\z}
\newcommand{\hb}{\boldsymbol{h}}

\newcommand{\compconj}[1]{%
  \overline{#1}%
}

\newtheorem{theorem}{Theorem}[section]
\newtheorem{lemma}[theorem]{Lemma}
\newtheorem{corollary}[theorem]{Corollary}
\newtheorem{proposition}[theorem]{Proposition}
\newtheorem{remark}[theorem]{Remark}
\newtheorem{assumption}[theorem]{Assumption}
\newtheorem{definition}[theorem]{Definition}

\crefname{section}{Section}{Sections}
\crefname{figure}{Fig.}{Figs.}
\crefname{theorem}{Theorem}{Theorems}
\crefname{lemma}{Lemma}{Lemmas}
\crefname{corollary}{Corollary}{Corollaries}
\crefname{proposition}{Proposition}{Propositions}
\crefname{remark}{Remark}{Remarks}
\crefname{assumption}{Assumption}{Assumptions}

\newcommand{\bw}{\mathbf{w}}
\newcommand{\balpha}{\boldsymbol{\alpha}}

\newcommand{\cI}{\mathcal{I}}

\begin{document}

\keywords{localized model order reduction, multiscale methods, optimal local
approximation spaces, parametric partial differential equations, sparse grid
interpolation, parametric regularity, GFEM}

\subjclass[2020]{65N55, 65N30, 65N25, 65N15, 65D05, 58B20}

 \begin{abstract}
     Multiscale, parameter-dependent partial differential equations (PDEs) pose severe
     computational challenges due to strong coefficient heterogeneity and high-dimensional
     parameter spaces. We develop a geometric interpolation approach within the multiscale
     generalized finite element method (MS-GFEM) that targets the most expensive component:
     computing parameter-dependent optimal local approximation spaces. Leveraging the
     spatial localization of MS-GFEM and assuming local parameter dependence, we decompose
     the global problem into parametrically low-dimensional local subproblems.
     The optimal
     subspaces for each parameter are identified as points on a Grassmann manifold and
     approximated via Grassmann interpolation on sparse grids, which preserves the geometric
     structure of these spaces while efficiently handling high-dimensional parameter spaces.
     The resulting localized model reduction method inherits the nearly exponential spatial
     convergence of MS-GFEM and the parametric convergence rates of sparse grids. Numerical experiments for elliptic problems confirm the theoretical
     convergence results.

 \end{abstract}

\maketitle

\section{Introduction}
\label{sec:introduction}

Many problems in science and engineering are governed by partial differential equations
(PDEs) whose coefficients exhibit strong heterogeneity across multiple spatial scales. When
these coefficients additionally depend on parameters, representing uncertain material
properties, loading conditions, or design variables, the resulting parametric multiscale
PDEs present a twofold computational challenge. On the one hand, resolving the fine-scale
structure of the coefficients requires very fine spatial discretizations. For classical finite
element methods, this is essential to achieve acceptable accuracy~\cite{Babuka1999} and leads
to very large, ill-conditioned linear systems. On the other hand, exploring the parameter
space demands repeated solves across potentially high-dimensional parameter domains.
Standard numerical methods that attempt to address both aspects simultaneously face
computational costs that grow prohibitively with spatial resolution and parametric
dimensionality.
 
Multiscale methods address the spatial challenge by incorporating the fine-scale structure of
the coefficients into problem-adapted approximation spaces, enabling accurate approximations
even on coarse grids. Some methods exploit structural assumptions on the coefficients such as
periodicity or scale separation, including the Heterogeneous Multiscale
Method~\cite{EE03}, the Two-Scale Finite Element Method~\cite{MaS02}, and the Multiscale
Finite Element Method~\cite{HoW97}. Other methods provide good approximations under minimal
assumptions on the coefficients, such as the Generalized Multiscale Finite Element
Method~\cite{EfeGH13,ChuEL18b}, Adaptive Local Bases~\cite{GraGS12}, the (Super-)Localized
Orthogonal Decomposition method~\cite{MalP14,HenP13,HaPe21b,Freese2023}, or
Gamblets~\cite{Owh17}; for comprehensive overviews see~\cite{AltHP21,ma2025unified}. These
methods have in common that they construct basis functions by solving local problems on
subdomains. Among them, the Multiscale Spectral Generalized Finite Element Method (MS-GFEM),
introduced in~\cite{BabL11} and further developed
in~\cite{babuvska2014machine,babuvska2020multiscale,ma2022novel,ma2022error,ma2025unified},
stands out for its strong localization properties. The required oversampling regions are
typically small compared to other multiscale methods. This makes the MS-GFEM particularly
well-suited for parametric problems, since on each subdomain the coefficient, and hence the
optimal local approximation space, depends on only a few parameters, reducing
the effective parametric dimension of the local problems.
 
For parametric and stochastic PDEs, a variety of approaches have been developed to
approximate the parameter-to-solution map. These include stochastic Galerkin
methods, stochastic collocation
methods~\cite{nobileAnisotropicSparseGrid2008,BabuskaNobileTempone2010}, sparse grid
techniques~\cite{BungartzGriebel2004}, tensor
decompositions~\cite{Oseledets2011,Bachmayr2023,BachmayrCohen2017,BachmayrCohenMigliorati2017a,BachmayrCohenDeVoreMigliorati2017b},
and generalized polynomial chaos expansions~\cite{XiuKarniadakis2002}. Model reduction
methods such as proper orthogonal decomposition (POD) and greedy reduced basis (RB)
methods~\cite{BennerGugercinWillcox2015} approximate the solution manifold in
low-dimensional spaces and have been combined with optimal local approximation spaces in
localized model order reduction
approaches~\cite{smetana2016optimal,Buhr2018,buhr2020localized,keil2023online,abdulle2015reduced}.
While greedy algorithms can achieve error decay rates comparable to the Kolmogorov $n$-width,
existing results typically guarantee accuracy only on finite training sets rather than
uniformly over the parameter domain~\cite{smetana2016optimal}. Moreover, in
high-dimensional parameter spaces, constructing sufficiently rich training sets becomes
prohibitively expensive, and the resulting reduced local spaces may be impractically large.
 
In this work, we propose a different strategy. Rather than building a single
local space that is accurate for all parameter values, we construct
parameter-dependent local approximation spaces by interpolating the entire map
from parameters to optimal local approximation spaces. The key observation is
that the MS-GFEM framework reduces the global parametric problem to independent
local subproblems, each depending on only a few parameters. On each subdomain,
the optimal local approximation space is determined by a local eigenvalue
problem whose leading $k$-dimensional eigenspace varies smoothly with the
parameters. Since these eigenspaces are subspaces of a fixed
Hilbert space, they are naturally identified as points on the Grassmann
manifold. To approximate the resulting parameter-to-subspace map, we compose it
with the Riemannian logarithm at a base point to obtain a map into the tangent
space, where standard sparse grid
interpolation~\cite{nobileAnisotropicSparseGrid2008} can be applied. The
interpolant is then mapped back to the Grassmann manifold via the Riemannian
exponential. This approach, similar
to~\cite{amsallem2008interpolation,savostianov2025grassmanian}, preserves the
geometric structure of the Grassmannian throughout the approximation process.
 
The starting point of our error analysis is a result that bounds the local approximation error for
any subspace of the local operator-harmonic spacein terms of the optimal local approximation
error and the largest principal angle between the given local subspace and
the optimal one. A key technical difficulty in the parametric setting
is that the operator-harmonic spaces at the interpolation points depend on the parameter, and
the interpolation of such spaces does in general not yield an operator-harmonic
space. Consequently, the local approximation error results are not directly applicable. To
overcome this, we observe that operator-harmonic functions are uniquely determined by their
boundary traces. Instead of interpolating
subspaces of the operator-harmonic space, we interpolate suitable  boundary trace
spaces and then apply operator-harmonic extension to recover the local
approximation spaces. With this reformulation, we can bound the local approximation error of the
interpolated spaces in terms of the geodesic distance on the Grassmann manifold between the
interpolated and optimal local subspaces.
 
It remains to control this distance. The error analysis consists of two main
steps: bounding the geodesic distance on the manifold by the interpolation error
in the tangent space, and bounding the latter. For the first step, we
extend a finite dimensional
result of \cite{jacobsson2025approximating} to the Grassmannian in infinite
dimensions. For the second, we establish real analyticity of the
parameter-to-tangent-space map under a spectral gap assumption on the local
eigenvalue problems (cf.~\cite{grubisic2023stochastic}), which allows us to
apply convergence results for sparse grid interpolation. Combining the MS-GFEM
spatial error estimates with the sparse grid convergence estimates on the
Grassmannian yields a method with nearly exponential convergence, both in the
spatial approximation and in the parametric interpolation error. The dimensionality of
the local interpolation problems can be controlled by choosing the subdomain
partition in MS-GFEM appropriately. 
 
The remainder of this paper is organized as follows. \Cref{sec:ms_gfem} introduces the
problem setting and recalls the construction and error analysis of the GFEM. Moreover, it 
introduces the MS-GFEM in the parametric setting and the relevant operator, whose 
eigenfunctions determine the optimal local approximation spaces of the MS-GFEM. Its
two main results are \cref{thm:T_is_holomorphic}, establishing the 
holomorphy of this operator, and
\cref{thm:real_analyticity_spectral_projection}, stating the real analyticity of the map 
from parameters to the optimal local approximation spaces. 
\Cref{sec:interpolation} reviews anisotropic sparse grid interpolation, extends the manifold
approximation result of \cite{jacobsson2025approximating} to the 
Grassmannian in infinite dimensions, and combines these to obtain local approximation error bounds for the 
interpolated local spaces in \cref{thm:local_approximation_error}. These local approximation errors are bounded by the interpolation 
error, which hinges on the regularity of the parameter-to-subspace map from
\cref{sec:ms_gfem}. 
\Cref{sec:numerical_experiments} presents numerical experiments
confirming the theoretical rates. \Cref{sec:appendix} collects auxiliary technical results.

\section{Parametric regularity of the optimal local MS-GFEM spaces}
\label{sec:ms_gfem}
In this section, we outline the spatial discretization underlying our
method and establish the parmatric regularity of the resulting local spaces. 
First, we recall that the global approximation error of the GFEM reduces to
the maximum over the local errors on the subdomains
(\cref{thm:global_error_gfem}), so that it suffices to approximate a single
optimal local space on each subdomain and for each parameter~$\para$.
Second, by slightly modifying the MS-GFEM, we identify the optimal
local approximation space as the dominant eigenspace of an operator $T_{\para}$ acting on a
parameter-independent space. Third, we show that $T$ extends holomorphically
in $\para$ and that the associated spectral projection is holomorphic,
subject to a spectral gap assumption. Restricting to real parameters, the induced
parameter-to-subspace map is then real analytic as a map into a Grassmannian.
\subsection{Problem setting} 
As a model problem, we consider the second-order elliptic diffusion equation with parameter dependent coefficients
\begin{equation}
    \label{eq:parametric_pde}
	\begin{aligned}
            -\nabla \cdot(A(x,\para) \nabla \u(x,\para)) & =f(x) \quad & & \text { in } \Omega,  \\
            \u(x,\para) & =0  \quad & & \text{ on } \partial \Omega,
	\end{aligned}
\end{equation}
for all $\para\in \Yglob$,
where $\Omega \subset \R^\spatDim$ is a bounded Lipschitz domain, 
$ \Yglob$ is the parameter domain, $f\in L^2(\Omega)$ and $A:D\to \Linf{\Omega;\C}$
is a holomorphic map defined on the open set $D\subset \C^{\dglob}$ containing $\Yglob$ with 
$A(\para)\in \Linf{\Omega;\R}$ for all real $\para\in D$.
For a given $\para \in D$, the solution of~\eqref{eq:parametric_pde} is defined in weak form
as a function $\u_{\para} \in H_0^1(\Omega;\C)$ such that
\begin{equation}
    \label{eq:weak_formulation}
     a_{\Omega, \para}(\u_{\para}, \v) := \int_{\Omega} A(x,\para) \nabla \u_{\para}(x) \cdot \compconj{\nabla \v(x)} \mathrm{d} x = \int_{\Omega} f(x) \compconj{\v(x)} \mathrm{d} x, \quad \v \in H_0^1(\Omega;\C).
\end{equation}
For subdomains $\om{}\subset\Omega$, we use the notation
$a_{\om{},\para}(\cdot,\cdot)$ to denote the restriction of the sesquilinear
form $a_{\Omega, \para}(\cdot,\cdot)$ to the integration domain
$\om{}$. For brevity, we also
write $A_{\para}(x) = A(x,\para)$ and $a_{\para} = a_{\Omega, \para}$.

Existence and uniqueness of the solutions $\u_{\para}$ of
\eqref{eq:weak_formulation} are guaranteed by the Lax-Milgram Theorem,
cf.~\cite[Theorem 2.1]{cohen2015approximation}, under the
\textbf{uniform ellipticity assumption}: there are $0
< \Amin \leq \Amax < \infty$ such that for all $x \in \Omega$ and for all $
\para \in D$
$$
\Amin \leq \mathrm{Re} (A(x, \para)), \quad |A(x, \para)| \leq \Amax\,.
$$

Although this work focuses on the
diffusion equation, the MS-GFEM has been successfully applied to a wide range of
problems, including parabolic, Helmholtz, and convection–diffusion problems; cf.
\cite{ma2025unified,ma2023wavenumber,schleuss2022optimal,holbach2025robust}. The
results of this work are expected to extend to these settings under
suitable assumptions, including holomorphic dependence of the PDE coefficients
on the parameters. We leave such extensions to future work.

\subsection{GFEM}
The spatial discretization underlying our method is based on the Generalized Finite Element
Method (GFEM). 
Unlike the classical FEM, the GFEM allows the use of problem-adapted local ansatz spaces,
which can yield significantly more accurate approximations than polynomial-based spaces. 
This flexibility is central to our approach.
In this subsection, we recall the general GFEM framework and a global error estimate that 
reduces the approximation error to local contributions.
Let \(\{\omega_i\}_{i=1}^M\) be an overlapping decomposition of the computational domain
\(\Omega\) into \(M \in \mathbb{N}\) bounded, connected Lipschitz subdomains. We assume that \(\cup_{i=1}^{M}
\omega_i = \Omega\) and that each point \( x \in \Omega\) belongs to at most \(\kappa \in
\mathbb N\) subdomains. Subordinate to this decomposition, we consider a partition of
unity consisting of nonzero functions \(\{\chi_i\}_{i=1}^M\) satisfying
\begin{equation}
	\label{eq:propPU}
	\begin{aligned}
			0\leq \chi_{i}\leq 1,\qquad
                        \operatorname{supp}(\chi_i) \subset
                        \overline{\omega_i},\qquad
                        \sum_{i=1}^{M}\chi_{i} \equiv 1, 
		\chi_{i}\in W^{1,\infty}(\Omega),\qquad  \max_{ x \in \Omega}\,|\nabla \chi_i|\leq \frac{C_\chi}{{ \delta_i}},
	\end{aligned}
\end{equation}
where $C_\chi>0$ and $\delta_i$ denotes the minimal width of the overlap $\{ x
\in \omi : \exists j\neq i \text{ s.t. }  x \in \om{j}\}$ of $\omi$.

The central ingredient of the GFEM is the choice of local approximation spaces
\(S_{n_i}(\omega_i) \subset H^1(\omega_i)\) with \(n_i \in \mathbb{N}\),
satisfying homogeneous boundary conditions on \(\partial \omega_i \cap \partial
\Omega\). A global approximation space is then obtained by gluing the local
spaces together via the partition of unity. Denoting \({\nlocGlob} = (\nloc_1,
\dots, \nloc_M) \in \mathbb{N}^M\), the global GFEM space is defined as
\begin{equation}
	\label{eq:Sn}
	S_{\nlocGlob}(\Omega) := \left\{ \sum_{i=1}^{M} \chi_i \phi_i : \, \phi_i \in S_{\nloc_i}(\omega_i) \right\} \subset H_0^1(\Omega).
\end{equation}
A key feature of the GFEM is that the approximation error of this global space
can be controlled by the maximum of the local approximation errors
(cf.~\cref{thm:global_error_gfem}), which enables us to construct and
interpolate the local spaces independently on each subdomain. In particular, we
will construct parameter-dependent local approximation spaces by solving a number 
of eigenproblems and interpolating the dominant eigenspaces.

To account for source terms, the GFEM employs particular functions. On each
subdomain, a local particular function $\u_i^p \in H_0^1(\omi)$ is constructed.
The global particular function is then glued together via the partition of unity,
leading to
\begin{equation*}
	\u^p :=  \sum_{i=1}^{M} \chi_i \u_i^p \in H^1_0(\Omega).
\end{equation*}
For brevity, we
suppress the dependence of the on the parameter \(\para\) in our notation throughout
the rest of this subsection. The GFEM solution is defined as the Galerkin
approximation in the affine space $\u^p + S_{\nlocGlob}(\Omega)$, seeking $\u^G =
\u^p + \u^s$ with $\u^s \in S_{\nlocGlob}(\Omega)$ such that
\begin{equation}
	\label{eq:weakformulationgfem}
	a(\u^s,\v) = F(\v) - a(\u^p,\v)\qquad \forall \v \in S_{\nlocGlob}(\Omega). 
\end{equation}
The following result bounds the global GFEM error in terms of the local
approximation errors. The specific version stated here, which incorporates the
partition of unity in the local estimates, is taken
from~\cite[Theorem~2.1]{ma2022novel}. For each $i$, let $\osi$ be a bounded,
connected Lipschitz oversampling domain with $\omi\subset\osi\subset\Omega$
and
\[
    \operatorname{dist}\bigl(\omi,\partial\osi\setminus\partial\Omega\bigr)>0.
\]
We assume that $\partial\osi\cap\partial\Omega$ is either empty or has
positive surface measure.
\begin{theorem}[Error estimate of GFEM]\label{thm:global_error_gfem} Consider a
function $\Psi := \u^{p} + \sum_{i=1}^{M}\chi_{i}\phi_{i}$, with $\u^p \in
H^1_0(\Omega)$ and  $ \phi_i \in S_{\nloc_i}(\omi)$ such that 
		\begin{equation}\label{eq:local_gfem_approx_error}
		\big\Vert \chi_{i}(\u-\u^{p}_{i}-\phi_{i})\big\Vert_{a,\,\omega_{i}}\leq \varepsilon_{i} { \big\Vert \u \big\Vert_{a,\,\osi}},
	\end{equation}
	for some $\varepsilon_{i}>0$, and
	$\omega_i\subset\omega_i^*\subset\Omega$. Then, the GFEM error can be bounded as
	\begin{equation}
		\|\u-\u^G\|_a \leq \|\u-\Psi\|_a
		\leq  {\sqrt{\kappa\kappa^*}\left(\max_{i=1,\ldots,M} \varepsilon_i\right)\big\Vert \u \big\Vert_{a}},
	\end{equation}
	where $\kappa^* \in \mathbb N$ is the maximum number of oversampling domains $\osi$
    that any $ x \in \Omega$ is contained in.
  \end{theorem}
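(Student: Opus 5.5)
The first inequality is Céa's lemma in the energy norm. Since $a$ is Hermitian (for real $\para$) and coercive, $\u^G$ is the $a$-orthogonal projection of $\u$ onto the affine space $\u^p + S_{\nlocGlob}(\Omega)$. The function $\Psi$ belongs to this affine space, because $\sum_i \chi_i\phi_i \in S_{\nlocGlob}(\Omega)$ by~\eqref{eq:Sn}. Galerkin orthogonality $a(\u-\u^G,\v)=0$ for all $\v\in S_{\nlocGlob}(\Omega)$, which is~\eqref{eq:weakformulationgfem}, then gives $\|\u-\u^G\|_a\le\|\u-\Psi\|_a$. For complex $\para$ one would only get a quasi-optimality constant, so I take the statement to refer to real parameters, where the form is an inner product.

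For the second inequality, the plan is as follows.

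\textbf{Step 1 (localise the error).} I read the theorem with $\u^p=\sum_i\chi_i\u_i^p$, as in the construction preceding it. The partition of unity gives $\sum_i\chi_i\equiv 1$, so
\[
\u-\Psi=\sum_{i=1}^M \chi_i(\u-\u_i^p-\phi_i)=:\sum_{i=1}^M w_i ,
\]
where $\operatorname{supp} w_i\subset\overline{\omi}$.

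\textbf{Step 2 (use bounded overlap).} At each point $x$ at most $\kappa$ of the $w_i$ are nonzero. Cauchy--Schwarz in $\mathbb{C}^\kappa$, combined with the bound $|A|\le\Amax$ and pointwise positivity of the real symmetric coefficient, gives
\[
A\nabla(\textstyle\sum_i w_i)\cdot\overline{\nabla(\sum_i w_i)}\le \kappa\sum_i A\nabla w_i\cdot\overline{\nabla w_i}
\]
pointwise. Integrating yields $\|\u-\Psi\|_a^2\le\kappa\sum_i\|w_i\|_{a,\omi}^2$.

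\textbf{Step 3 (insert the local estimates).} By~\eqref{eq:local_gfem_approx_error}, $\|w_i\|_{a,\omi}^2\le\varepsilon_i^2\|\u\|_{a,\osi}^2\le(\max_i\varepsilon_i)^2\|\u\|_{a,\osi}^2$.

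\textbf{Step 4 (sum the oversampling contributions).} Every point lies in at most $\kappa^*$ of the sets $\osi$, so $\sum_i\|\u\|_{a,\osi}^2=\int_\Omega(\sum_i\mathbf 1_{\osi})A\nabla\u\cdot\overline{\nabla\u}\le\kappa^*\|\u\|_a^2$.

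Combining Steps 2--4 and taking square roots gives the constant $\sqrt{\kappa\kappa^*}$.

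The only delicate point is the pointwise estimate in Step 2. It requires $A$ to define a nonnegative pointwise form, which holds for real $\para$. For complex $A$, I would instead use $\mathrm{Re}\,A$ as the energy weight, but I will not pursue that case here.
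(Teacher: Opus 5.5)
Your proof is correct and is essentially the argument behind this result, which the paper does not reprove but takes from \cite[Theorem~2.1]{ma2022novel}: Galerkin orthogonality gives best approximation in $\u^p+S_{\nlocGlob}(\Omega)$. The identity $\u-\Psi=\sum_i\chi_i(\u-\u_i^p-\phi_i)$ (with $\u^p=\sum_i\chi_i\u_i^p$, as you correctly assume), together with the $\kappa$-fold overlap of the $\omi$ and the $\kappa^*$-fold overlap of the $\osi$, then yields the constant $\sqrt{\kappa\kappa^*}$. Your restriction to real parameters, where $a$ is Hermitian and pointwise nonnegative, is exactly the setting in which the statement is meant.
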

 In particular, the GFEM approximation
 is the best approximation in the energy norm in the space $\u^p +
 S_{\nlocGlob}(\Omega)$ due to Galerkin orthogonality.

\subsection{MS-GFEM for parametric problems}
We now turn to the parametric setting and let the local approximation spaces
depend on $\para$. We first recall the MS-GFEM construction. The local particular 
functions $\u_i^p$ depend on $\para$ and are given by $\u_i^p = \psi_i|_{\om{i}}$, where 
$\psi_i\in \HzC{\os{i}}$ is the unique solution of the local problem 
\begin{equation}
    \label{eq:local_particular_function}
    a_{\os{i},\para}(\psi_i, \v) = F(\v), \quad \forall \v \in \HzC{\os{i}}.
\end{equation} 
The optimal local spaces in the MS-GFEM are designed to minimize the local approximation
errors in \eqref{eq:local_gfem_approx_error} and are given as the dominant eigenspaces of a
compact parameter-dependent operator. Due to the localization of
\cref{thm:global_error_gfem}, we will drop the subdomain index $i$ in the following and
consider a single subdomain $\om{}$ and its oversampling domain $\os{}$ with
$\om{}\subset\os{}\subset\Omega$. We assume that on the oversampling domain $\os{}$, the
coefficient $A(x,\para)$ only depends on a few parameters. Without loss of generality, we
assume that $A(x,\para)$ only depends on the first $\dloc$ parameters on $\os{}$, that is,
we can assume that $D\subset \C^{\dloc}$ is open and contains $\Yloc$. We denote by
$\HzR{\os{}}$ or $\HzC{\os{}}$ the space of real or complex-valued $H^1$ functions on
$\os{}$ with zero trace.

A central observation in the MS-GFEM is that due to~\eqref{eq:local_particular_function},
the difference $\u_{\para}|_{\os{}} - \psi$ lies in the space of $A(\para)$-harmonic functions on $\os{}$, 
given by 
$$
H_{A(\para)}(\os{})
:= \{ \u \in H^1(\os{};\C) : a_{\os{},\para}(\u,\v) = 0, \, \forall \v \in \HzC{\os{}} \}.
$$ 
In the MS-GFEM, the local approximation spaces $S_{\nloc}(\om{})$ are defined via the optimal 
approximation space of the partition of unity operator 
\begin{equation}
    \label{def:puOrig}
    \puOrig_{\para}: H_{A(\para),0}(\os{}) \to \HzC{\om{}}, \quad\u\mapsto \chi \u|_{\om{}},
\end{equation}
where 
\begin{equation}
    \label{def:harmonic_space}
H_{A(\para),0}(\os{}) :=
    \begin{cases}
    \{\,\u \in H_{A(\para)}(\os{}) :
    a_{\om{},\para}(\chi\u,\chi)=0\},
    &\text{if } \partial\os{}\cap\partial\Omega=\emptyset,\\[6pt]
    \{\,\u \in H_{A(\para)}(\os{}) : \u|_{\partial\os{}\cap\partial\Omega} = 0 \},
    &\text{if } \partial\os{}\cap\partial\Omega\neq\emptyset.
    \end{cases}
\end{equation}
Here, the spaces $H_{A(\para),0}(\os{})$ and $\HzR{\om{}}$ are equipped with the energy 
inner products $a_{\os{},\para}(\cdot,\cdot)$ and $a_{\om{},\para}(\cdot,\cdot)$,
respectively.
We denote by $\optSpacePuOrig(\para)$ the space spanned by the 
eigenfunctions corresponding to the $\nloc$ largest eigenvalues of the 
eigenproblem, seeking $(\lambda, \u) \in \mathbb{R}
 \times H_{A(\para),0}(\os{})$ such that
\begin{equation}
    \label{eq:eigenproblem_puOrig}
    a_{\om{},\para}\bigl(\puOrig_{\para}\u,\, \puOrig_{\para}\v\bigr)
    = \lambda\, a_{\os{},\para}(\u, \v),\quad \forall \v \in H_{A(\para),0}(\os{}).
\end{equation}
The space $\optSpacePuOrig(\para)$ is the optimal space in the sense that it minimizes the 
Kolmogorov $\nloc$-width of the operator $\puOrig_{\para}$. 
The local approximation space on $\om{}$ of the MS-GFEM is defined as
\begin{equation*}
    S_{\nloc}(\om{}) = 
    \begin{cases}
    \optSpacePuOrig(\para)|_{\om{}} + \C,
    &\text{if } \partial\os{}\cap\partial\Omega=\emptyset,\\[6pt]
    \optSpacePuOrig(\para)|_{\om{}},
    &\text{if } \partial\os{}\cap\partial\Omega\neq\emptyset.
    \end{cases}
\end{equation*}
Note that in the case of
interior subdomains the dimension of $S_{\nloc}(\om{})$ is $\nloc +
1$.

By \cite[Lemma 2.19, Theorem 3.8]{ma2025unified} this
choice of $S_{\nloc}(\om{})$ leads to the nearly exponentially decaying local
approximation errors in \cref{thm:global_error_gfem}, in
particular,~\eqref{eq:local_gfem_approx_error} holds with $\epsilon =
\lambda_{\nloc + 1}^{1/2} \leq C \exp(-c\nloc^{1/\spatDim})$ with some constants $c,C>0$.
To analyze the spaces $S_{\nloc}(\om{})$ in the parametric setting, we need to analyze the
family of operators $\puOrig_{\para}$ as $\para$ varies in $\Yloc$. The technical problem is that 
the domain of $\puOrig_{\para}$ changes with $\para$. To circumvent this issue, 
it is more convenient to consider a related operator on the parameter-independent space 
\begin{align*}
\htraceZC{\os{}} :=
\begin{cases}
\left\{\,\u^\partial \in H^{1/2}(\partial\os{};\mathbb{C}) : \displaystyle\int_{\partial\os{}} \u^\partial\,\mathrm{d}s = 0\,\right\}, 
&\text{if } \partial\os{}\cap\partial\Omega=\emptyset,\\[6pt]
\{\,\u^\partial \in H^{1/2}(\partial\os{};\mathbb{C}) : \u^\partial = 0 \text{ on } \partial\os{}\cap\partial\Omega\,\},
&\text{otherwise.}
\end{cases}
\end{align*}
Now, for $\para \in D$, we consider the sesquilinear forms
\begin{equation}
    \label{eq:def_sesquilinear_forms}
    a_{\os{},\para}^{\partial}(\u^{\partial}, \v^{\partial}) 
    := a_{\os{},\para}(\AharmEx_{\para}(\u^{\partial}), \AharmEx_{\compconj{\para}}(\v^{\partial})),
    \qquad \text{and} \qquad
    a_{\om{},\para}(\u, \v),
\end{equation}
on $\htraceZC{\os{}}$ and $\HzC{\om{}}$, respectively,
where 
$$
\AharmEx_{\para}: \htraceZC{\os{}} \to
H_{A(\para)}(\os{})
$$ is the $A(\para)$-harmonic extension operator: For any  $\u\in
\htraceZC{\os{}}$ we denote by $\AharmEx_{\para}(\u)$ its
$A(\para)$-harmonic extension to $\omega^*$ to $\omega^*$, defined as the unique solution of 
\begin{equation*}
 a_{\os{},\para}(\AharmEx_{\para}(\u),\v) = 0, \quad\forall \v\in \HzC{\os{}}
\end{equation*}
with boundary data on $\partial\os{}$ given by $\u$. 
For an interior domain, we define
\begin{equation}
    \label{eq:def_M_para}
    \mathcal{M}_{\para} : H^1(\os{};\C) \to \C, \quad
    \mathcal{M}_{\para}(\u)
    :=\frac{a_{\om{},\para}(\chi\u,\chi)}
    {a_{\om{},\para}(\chi,\chi)}
\end{equation}
and let
\begin{equation}
    \label{eq:def_AharmEx_zero}
    \AharmEx_{\para}^{0}\u^{\partial}
    :=\AharmEx_{\para}\u^{\partial}
    -\mathcal{M}_{\para}(\AharmEx_{\para}\u^{\partial}).
\end{equation}
For a boundary domain, let $\AharmEx_{\para}^{0}:=\AharmEx_{\para}$. Since
$\chi$ is nonzero and supported in $\overline{\om{}}$, we have
$\|\nabla\chi\|_{L^2(\om{})}>0$. Uniform ellipticity therefore ensures that the
denominator in~\eqref{eq:def_M_para} does not vanish. Moreover, for an interior
domain,
\begin{equation}
    \label{eq:AharmEx_zero_constraint}
    a_{\om{},\para}(\chi\AharmEx_{\para}^{0}\u^{\partial},\chi)
    =a_{\om{},\para}(\chi\AharmEx_{\para}\u^{\partial},\chi)
    -\mathcal{M}_{\para}(\AharmEx_{\para}\u^{\partial})
    a_{\om{},\para}(\chi,\chi)=0.
\end{equation}
Thus, since $\AharmEx_{\para}^{0}\u^{\partial}-\AharmEx_{\para}\u^{\partial}$
is constant, for any $\para\in D$ and $\u^{\partial},\v^{\partial}\in\htraceZC{\os{}}$,
we can replace $\AharmEx_{\para}$ by $\AharmEx_{\para}^{0}$ in~\eqref{eq:def_sesquilinear_forms} and obtain
\begin{equation}
    \label{eq:AharmEx_zero_energy_identity}
    a_{\os{},\para}
    (\AharmEx_{\para}^{0}\u^{\partial},
    \AharmEx_{\compconj{\para}}^{0}\v^{\partial})
    =a_{\os{},\para}^{\partial}(\u^{\partial},\v^{\partial}).
\end{equation}
\begin{lemma}
    \label{prop:inner_product_for_real}
    For real parameters $\para \in \Yloc$, the spaces $\htraceZC{\os{}}$ and $\HzC{\om{}}$ equipped with the sesquilinear forms defined in~\eqref{eq:def_sesquilinear_forms} are Hilbert spaces.
\end{lemma}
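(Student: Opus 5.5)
For real $\para\in\Yloc$ the coefficient $A_{\para}$ is real-valued with $A_{\para}\ge\Amin>0$, and $\compconj{\para}=\para$, so both extension operators in~\eqref{eq:def_sesquilinear_forms} coincide. The plan is to check, for each form, that it is a sesquilinear, Hermitian, positive definite form whose induced norm is equivalent to the standard norm of the underlying space. Completeness is then inherited from the completeness of $\HzC{\om{}}$ and of $\htraceZC{\os{}}$. The latter is a closed subspace of $\htraceC{\os{}}$, being the kernel of a bounded functional (interior case) or the set of traces vanishing on a portion of the boundary (boundary case).

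\textbf{The form on $\HzC{\om{}}$.} Since $A_{\para}$ is real, $a_{\om{},\para}(\v,\u)=\compconj{a_{\om{},\para}(\u,\v)}$. Moreover
\[
\Amin\|\nabla \u\|_{L^2(\om{})}^2\le a_{\om{},\para}(\u,\u)\le \Amax\|\nabla \u\|_{L^2(\om{})}^2 .
\]
By the Poincar\'e inequality on the bounded domain $\om{}$ for functions with zero trace, $\|\nabla\cdot\|_{L^2}$ is an equivalent norm on $\HzC{\om{}}$. Hence this space is a Hilbert space with this inner product.

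\textbf{The form on $\htraceZC{\os{}}$.} Here $a^{\partial}_{\os{},\para}(\u^\partial,\v^\partial)=a_{\os{},\para}(\AharmEx_{\para}\u^\partial,\AharmEx_{\para}\v^\partial)$. Linearity of $\AharmEx_{\para}$ gives sesquilinearity, and realness of $A_\para$ gives Hermitian symmetry. By~\eqref{eq:AharmEx_zero_energy_identity} we may equivalently use $\AharmEx^0_{\para}$. The key step is the norm equivalence
\[
c\|\u^\partial\|_{H^{1/2}(\partial\os{})}^2\le a^{\partial}_{\os{},\para}(\u^\partial,\u^\partial)\le C\|\u^\partial\|_{H^{1/2}(\partial\os{})}^2 .
\]

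\emph{Upper bound.} Let $R$ be a bounded right inverse of the trace on the Lipschitz domain $\os{}$. Lax--Milgram applied to $\AharmEx_\para\u^\partial-R\u^\partial\in\HzC{\os{}}$ gives $\|\AharmEx_\para \u^\partial\|_{H^1}\lesssim\|\u^\partial\|_{H^{1/2}}$. Equivalently, the harmonic extension minimizes the energy among extensions, so its energy is at most $\Amax\|\nabla R\u^\partial\|^2$.

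\emph{Lower bound.} This is where the constraint in $\htraceZC{\os{}}$ is needed, since constants have zero energy. By the trace theorem, $\|\u^\partial\|_{H^{1/2}}\lesssim\|\AharmEx_\para\u^\partial\|_{H^1(\os{})}$, so it suffices to bound the full $H^1$ norm of $\w=\AharmEx_\para\u^\partial$ by $\|\nabla \w\|_{L^2(\os{})}$.
\begin{itemize}
\item In the boundary case, $\w$ vanishes on $\partial\os{}\cap\partial\Omega$, a set of positive surface measure. A Friedrichs-type Poincar\'e inequality then gives $\|\w\|_{H^1}\lesssim\|\nabla \w\|_{L^2}$.
\item In the interior case, $\int_{\partial\os{}}\w\,\mathrm{d}s=0$. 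I would prove the Poincar\'e inequality $\|\w\|_{L^2(\os{})}\lesssim\|\nabla\w\|_{L^2(\os{})}$ for such $\w$ by the standard compactness (Rellich) argument. If it failed, there would be a sequence $\w_k$ with $\|\w_k\|_{H^1}=1$ and $\nabla\w_k\to0$. A subsequence converges in $H^1$ to a constant (using connectedness of $\os{}$) whose boundary mean is zero by continuity of the trace. That constant is therefore zero, contradicting $\|\w_k\|_{H^1}=1$.
\end{itemize}
Combining these with $a_{\os{},\para}(\w,\w)\ge\Amin\|\nabla\w\|^2$ gives the lower bound, hence positive definiteness and norm equivalence.

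This compactness-based Poincar\'e inequality for the trace-mean constraint, together with its Friedrichs analogue in the boundary case, is the only non-routine point. Everything else follows from uniform ellipticity and standard trace theory.
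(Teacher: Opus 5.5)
Your proof is correct, but it is organized differently from the paper's. The paper treats only the trace space and proves completeness directly. It maps a Cauchy sequence $(\u_n^{\partial})$ in the $a^{\partial}_{\os{},\para}$-norm by $\AharmEx_{\para}$ to an energy-Cauchy sequence in the kernel of the boundary-mean functional. The extended Poincar\'e--Steklov inequality (cited from Ern--Guermond) upgrades this to an $H^1(\os{})$-Cauchy sequence. Finally it identifies the limit as the $A(\para)$-harmonic extension of its own trace, which lies in $\htraceZC{\os{}}$ by continuity of the trace.

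You instead prove a two-sided equivalence between the $a^{\partial}_{\os{},\para}$-norm and the $H^{1/2}(\partial\os{})$-norm, and inherit completeness from the closed subspace $\htraceZC{\os{}}\subset\htraceC{\os{}}$. The core estimate is the same: your Rellich compactness argument is a proof of exactly the Poincar\'e--Steklov inequality the paper cites, and the Friedrichs inequality plays the same role in the boundary case. Your route costs one extra ingredient, the upper bound via a bounded right inverse of the trace or energy minimization. In return it gives more:
\begin{itemize}
\item It makes positive definiteness explicit. The paper's step ``$a^{\partial}_{\os{},\para}(\u^{\partial},\u^{\partial})=0$ implies $\AharmEx_{\para}\u^{\partial}=0$'' tacitly uses that zero energy forces a constant, which the constraint in $\htraceZC{\os{}}$ then kills.
\item It shows that the topology induced by $a^{\partial}_{\os{},\para}$ is the standard, parameter-independent $H^{1/2}$ topology.
\item It sidesteps identifying the limit of harmonic extensions.
\end{itemize}
You also treat $\HzC{\om{}}$ explicitly, which the paper leaves to the reader.
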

The proof is given in \cref{proof:inner_product_for_real}.
For complex parameters, on the other hand, these sesquilinear forms
are in general not Hermitian and thus do not define inner products.
\begin{lemma}
    \label{lem:AharmEx_zero_isometry}
    For real $\para\in D$, the map
    \begin{equation*}
        \AharmEx_{\para}^{0}:
        \bigl(\htraceZC{\os{}},a_{\os{},\para}^{\partial}\bigr)
        \to
        \bigl(H_{A(\para),0}(\os{}),a_{\os{},\para}\bigr)
    \end{equation*}
    is an isometry. For a boundary domain, its inverse is the trace
    operator $\trace : \HzC{\os{}} \to \htraceZC{\os{}}$. For an interior domain, its inverse is
    \begin{equation}
        \label{eq:AharmEx_zero_inverse}
        \u\mapsto \trace\u
        -\frac{1}{|\partial\os{}|}
        \int_{\partial\os{}}\trace\u\,\mathrm{d}s.
    \end{equation}
    
\end{lemma}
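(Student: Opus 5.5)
The claim has three parts. First, $\AharmEx_{\para}^{0}$ maps $\htraceZC{\os{}}$ into $H_{A(\para),0}(\os{})$. Second, it preserves the inner products. Third, it is bijective, with the stated inverse. I would check them in that order, treating the interior and boundary cases separately only where the definitions differ. Throughout, $\para$ is real, so $\compconj{\para}=\para$, $\AharmEx_{\compconj{\para}}=\AharmEx_{\para}$, and by \cref{prop:inner_product_for_real} both forms are genuine inner products.

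\emph{Mapping property and isometry.} For a boundary domain, $\AharmEx_{\para}\u^\partial$ is $A(\para)$-harmonic by construction. Its trace vanishes on $\partial\os{}\cap\partial\Omega$ because $\u^\partial$ does, so it lies in $H_{A(\para),0}(\os{})$.

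For an interior domain, $\AharmEx_{\para}^0\u^\partial$ differs from $\AharmEx_{\para}\u^\partial$ by a constant. Constants are $A(\para)$-harmonic, since their gradient vanishes, so $\AharmEx_{\para}^0\u^\partial\in H_{A(\para)}(\os{})$. The orthogonality constraint defining $H_{A(\para),0}$ is exactly \eqref{eq:AharmEx_zero_constraint}.

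The isometry then follows directly from \eqref{eq:AharmEx_zero_energy_identity} with $\compconj{\para}=\para$:
\[
a_{\os{},\para}(\AharmEx_{\para}^0\u^\partial,\AharmEx_{\para}^0\v^\partial)=a^\partial_{\os{},\para}(\u^\partial,\v^\partial).
\]
In particular the map is injective.

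\emph{Inverse and surjectivity.} For a boundary domain, take $\u\in H_{A(\para),0}(\os{})$ and set $\u^\partial=\trace\u$. Then $\u^\partial\in\htraceZC{\os{}}$, because $\u$ vanishes on $\partial\os{}\cap\partial\Omega$. Both $\u$ and $\AharmEx_{\para}\u^\partial$ are $A(\para)$-harmonic with the same trace, so their difference lies in $\HzC{\os{}}$ and is $A(\para)$-harmonic. Testing with the difference itself and using uniform ellipticity ($\mathrm{Re}\,A\ge\Amin$) together with Poincaré's inequality forces the difference to vanish. This identifies the inverse as $\trace$; I read the statement's $\trace:\HzC{\os{}}\to\cdot$ as a typo for the trace on $H^1$.

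For an interior domain, let $c=|\partial\os{}|^{-1}\int_{\partial\os{}}\trace\u\,\mathrm{d}s$ and $\u^\partial=\trace\u-c\in\htraceZC{\os{}}$. By the same uniqueness argument, $\AharmEx_{\para}\u^\partial=\u-c$. Hence $\AharmEx_{\para}^0\u^\partial=\u-c-\mathcal{M}_{\para}(\u-c)$. Since $\mathcal{M}_{\para}(\text{const})=\text{const}$ and $\mathcal{M}_{\para}(\u)=0$ for $\u\in H_{A(\para),0}$, we get $\mathcal{M}_{\para}(\u-c)=-c$, and therefore $\AharmEx_{\para}^0\u^\partial=\u$. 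This gives surjectivity.

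Conversely, take $\u=\AharmEx_{\para}^0\u^\partial$. Its trace is $\u^\partial$ minus a constant, and subtracting the boundary mean returns $\u^\partial$, because $\u^\partial$ has zero mean. So \eqref{eq:AharmEx_zero_inverse} is a two-sided inverse.

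The main, though mild, obstacle is the interior case: the mean-zero normalization on the boundary and the $\mathcal{M}_{\para}$-normalization in the interior are different, and one must show they are compatible. This is exactly what the computation $\mathcal{M}_{\para}(\u-c)=-c$ achieves, and it relies on $\mathcal{M}_{\para}$ being linear and equal to the identity on constants.
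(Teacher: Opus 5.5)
Your proof is correct and follows essentially the same route as the paper: the isometry comes directly from \eqref{eq:AharmEx_zero_energy_identity}, and the inverse is checked by the same two-sided computation, using uniqueness of the $A(\para)$-harmonic extension and $\mathcal{M}_{\para}(\u - c) = \mathcal{M}_{\para}(\u) - c = -c$. You also verify explicitly that $\AharmEx_{\para}^0$ maps into $H_{A(\para),0}(\os{})$, which the paper leaves implicit, and you are right to read the stated domain $\HzC{\os{}}$ of the trace operator as a typo.
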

\begin{proof}
    Let $\os{}$ be a boundary domain. The trace operator $\trace$ is a left
    inverse of $\AharmEx_{\para}$ by definition. Moreover, for $\u \in
    H_{A(\para),0}(\os{})$, $\u$ is
    $A(\para)$-harmonic with boundary values $\trace\u$, so that 
    $\AharmEx_{\para}\trace\u = \u$. 

    Let $\os{}$ be an interior domain. Let us define $\ell : \htraceC{\os{}} \to
    \C$ by $\ell(\u^{\partial}) :=
    |\partial\os{}|^{-1}\int_{\partial\os{}}\u^{\partial}\,\mathrm{d}s$. Moreover, 
    let $\widetilde{\trace}: H^1(\os{};\C) \to \htraceC{\os{}}$ be defined by 
    $\widetilde{\trace}\u := \trace\u - \ell(\trace\u)$. 
    Let $\u^{\partial}\in\htraceZC{\os{}}$. Since $\widetilde{\trace}$ maps 
    constants to zero, we have
    \begin{align*}
        \widetilde{\trace}(\AharmEx_{\para}^{0}\u^{\partial})
        &=\widetilde{\trace}(\AharmEx_{\para}\u^{\partial})
        = \trace(\AharmEx_{\para}\u^{\partial}) 
        - \ell(\trace(\AharmEx_{\para}\u^{\partial}))
        = \u^{\partial} - \ell(\u^{\partial}) = \u^{\partial}.
    \end{align*}
    Conversely, for $\u\in H_{A(\para),0}(\os{})$, 
    we have 
    \begin{align*}
        \AharmEx_{\para}(\widetilde{\trace}\u)
        &= \AharmEx_{\para}(\trace\u - \ell(\trace\u))
        = \u - \ell(\trace\u)
    \end{align*}
    and thus 
    \begin{align*}
        \AharmEx_{\para}^0(\widetilde{\trace}\u)
        = \u - \ell(\trace\u) - \mathcal{M}_{\para}(\u - \ell(\trace\u))
        = \u - \ell(\trace\u) - \mathcal{M}_{\para}(\u) + \ell(\trace\u) 
        = \u - \mathcal{M}_{\para}(\u)
        = \u.
    \end{align*}
    Thus,
    $\AharmEx_{\para}^{0}$ is a bijection. 
    The isometry follows from~\eqref{eq:AharmEx_zero_energy_identity}.
\end{proof}
\begin{remark}
    The reason for complex conjugation in the definition of  $a_{\os{},\para}^{\partial}(\cdot,
    \cdot)$ in~\eqref{eq:def_sesquilinear_forms} 
    will become clear in the proof of \cref{prop:holomorphy_G_V}, 
    where the operator associated to the sesquilinear form
    $a_{\os{},\para}^{\partial}(\cdot, \cdot)$
    is shown to be holomorphic in $\para$.
\end{remark}
We now define the alternative parametric partition of unity operator
\begin{equation*}
    \pu: D \to \linop{\htraceZC{\os{}}}{\HzC{\om{}}}, \quad \para\mapsto \pu{\para},
\end{equation*}
where $\pu{\para}\u^{\partial} := \chi \AharmEx_{\para}^{0}(\u^{\partial})$. In
contrast to $\puOrig_{\para}$ defined in~\eqref{def:puOrig}, the domain of 
$\pu_{\para}$ is independent of $\para$, and
$\pu{\para}=\puOrig_{\para}\AharmEx_{\para}^{0}$.
\begin{lemma}
    \label{lem:compactness_pu}
    For real $\para\in D$, the operator $\pu{\para}$ is compact.
\end{lemma}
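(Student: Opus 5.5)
Since $\pu{\para}=\puOrig_{\para}\AharmEx_{\para}^{0}$, and \cref{lem:AharmEx_zero_isometry} shows that $\AharmEx_{\para}^{0}$ is an isometry (hence bounded) from $(\htraceZC{\os{}},a^{\partial}_{\os{},\para})$ onto $(H_{A(\para),0}(\os{}),a_{\os{},\para})$ for real $\para$, it suffices to prove that $\puOrig_{\para}:H_{A(\para),0}(\os{})\to\HzC{\om{}}$ is compact. The composition of a bounded operator with a compact one is compact.

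Compactness of $\puOrig_{\para}$ is the standard MS-GFEM fact, which I would reprove as follows. Take a bounded sequence $(\u_k)\subset H_{A(\para),0}(\os{})$, so that $a_{\os{},\para}(\u_k,\u_k)\le C$.

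\textbf{Normalisation.} In the interior case, the functions are determined only up to the constraint $a_{\om{},\para}(\chi\u,\chi)=0$. I would use a Poincar\'e-type argument to show that $\|\u_k-c_k\|_{L^2(\os{})}$ is bounded for suitable constants $c_k$ (mean values). The constants are then recovered from the constraint, because $a_{\om{},\para}(\chi,\chi)\neq 0$; equivalently, $\|\u\|_{H^1(\os{})}\lesssim \|\u\|_{a,\os{}}$ on $H_{A(\para),0}(\os{})$. In the boundary case, Friedrichs' inequality applies directly, since $\u_k$ vanishes on $\partial\os{}\cap\partial\Omega$, which has positive measure. Either way, $(\u_k)$ is bounded in $H^1(\os{})$. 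By Rellich, a subsequence converges in $L^2(\os{})$.

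\textbf{Caccioppoli step.} This is the key step. Fix a cutoff $\eta\in W^{1,\infty}$ with $\eta=1$ on $\om{}$, supported away from $\partial\os{}\setminus\partial\Omega$; this uses $\operatorname{dist}(\om{},\partial\os{}\setminus\partial\Omega)>0$. For $A(\para)$-harmonic $\w$ vanishing on $\partial\os{}\cap\partial\Omega$ (or after subtracting a constant in the interior case, which is still harmonic), the Caccioppoli inequality gives
\[
\|\nabla \w\|_{L^2(\om{})}\le C\,\|\w\|_{L^2(\os{})}.
\]
To obtain it, test with $\eta^2\w\in\HzC{\os{}}$ and use uniform ellipticity with real $A(\para)$. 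Applying this to $\w=\u_k-\u_l$ shows that $\nabla\u_k$ is Cauchy in $L^2(\om{})$. Hence $\chi\u_k$ is Cauchy in $H^1(\om{})$, since
\[
\nabla(\chi\w)=\chi\nabla \w+\w\nabla\chi, \qquad \chi\in W^{1,\infty}.
\]
This convergence in $H^1(\om{})$ gives convergence in the $a_{\om{},\para}$-norm.

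In the interior case, $\u_k-\u_l$ need only converge in $L^2$ modulo constants. Because $\chi$ multiplies the constants, I would handle this by tracking $c_k$: either $c_k$ is bounded, in which case I pass to a convergent subsequence of constants, or I simply work with the $H^1$ bound from the normalisation step, which already includes the constants.

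The main obstacle is this interior-case normalisation, namely showing that the $a_{\os{},\para}$-norm controls the full $H^1(\os{})$ norm on $H_{A(\para),0}(\os{})$. I would prove it by contradiction using Rellich: a normalised sequence with vanishing energy converges to a constant $c$, and the constraint then forces
\[
c\,a_{\om{},\para}(\chi,\chi)=0,
\]
so $c=0$, contradicting the normalisation.
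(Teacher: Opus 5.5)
Your proposal is correct and follows the paper's proof: factor $\pu{\para}=\puOrig_{\para}\AharmEx_{\para}^{0}$, use that $\AharmEx_{\para}^{0}$ is an isometry by \cref{lem:AharmEx_zero_isometry}, and conclude from compactness of $\puOrig_{\para}$. The only difference is that the paper takes compactness of $\puOrig_{\para}$ as the known MS-GFEM fact, whereas you also reprove it (the energy norm controls the $H^1(\os{})$ norm on $H_{A(\para),0}(\os{})$, as in \cref{lem:constraint_poincare}, followed by Rellich and a Caccioppoli estimate), which is a correct, self-contained addition.
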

\begin{proof}
    Let $(\u^{\partial}_n)_{n\in \mathbb{N}}$ be a bounded sequence in $\htraceZC{\os{}}$. 
    Then, by \cref{lem:AharmEx_zero_isometry}, the sequence
    $(\AharmEx_{\para}^{0}\u^{\partial}_n)_{n\in \mathbb{N}}$ is a bounded sequence in
    $H_{A(\para),0}(\os{})$. Since $\puOrig_{\para}$ is compact, the sequence 
    $(\puOrig_{\para}(\AharmEx_{\para}^{0}\u^{\partial}_n))_{n\in \mathbb{N}}$ has a convergent
    subsequence. Thus, $\pu{\para}$ is compact.
\end{proof}
For real $\para\in D$, we denote by
$ \optSpacePu(\para)$ the space spanned by the $\nloc$
eigenfunctions  corresponding to the $\nloc$ largest eigenvalues
of the eigenproblem seeking $(\lambda^{\partial}, \u^{\partial}) \in \R \times \htraceZC{\os{}}$ such that
\begin{equation}
    \label{eq:eigenproblem_pu}
    a_{\om{},\para}\bigl(\pu_{\para}\u^{\partial},\, \pu_{\para}\v^{\partial}\bigr)
    = \lambda^{\partial}\, a_{\os{},\para}^{\partial}(\u^{\partial}, \v^{\partial}),\quad \forall \v^{\partial} \in \htraceZC{\os{}}.
\end{equation}
The optimal approximation space of $\pu_{\para}$ is given by $\pu_{\para}(\optSpacePu(\para))$.
The following proposition 
shows that the two optimal approximation spaces $\optSpacePu(\para)$ 
and $\optSpacePuOrig(\para)$ are related through the operator harmonic
extension operator
$\AharmEx_{\para}^{0}$.
\begin{proposition}
    \label{prop:optimal_spaces_pu_and_puOrig}
    For real $\para\in D$, the spaces $\optSpacePu(\para)$ and $\optSpacePuOrig(\para)$ are related by
    \begin{equation*}
        \optSpacePuOrig(\para) = \AharmEx_{\para}^{0}(\optSpacePu(\para)).
    \end{equation*}
\end{proposition}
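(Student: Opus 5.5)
The plan is to transport the eigenproblem \eqref{eq:eigenproblem_pu} to \eqref{eq:eigenproblem_puOrig} through the isometry of \cref{lem:AharmEx_zero_isometry}. Fix real $\para\in D$ and write $J:=\AharmEx_{\para}^{0}$. By \cref{lem:AharmEx_zero_isometry}, $J$ is a bijective isometry from $(\htraceZC{\os{}},a^{\partial}_{\os{},\para})$ onto $(H_{A(\para),0}(\os{}),a_{\os{},\para})$. Since $\para$ is real, $\compconj{\para}=\para$, so $\AharmEx_{\compconj{\para}}^{0}=J$. Identity \eqref{eq:AharmEx_zero_energy_identity} then reads
\[
a_{\os{},\para}(J\u^{\partial},J\v^{\partial})=a^{\partial}_{\os{},\para}(\u^{\partial},\v^{\partial}).
\]
Together with $\pu{\para}=\puOrig_{\para}J$, for all $\u^{\partial},\v^{\partial}$ we get
\[
a_{\om{},\para}(\pu{\para}\u^{\partial},\pu{\para}\v^{\partial})=a_{\om{},\para}(\puOrig_{\para}J\u^{\partial},\puOrig_{\para}J\v^{\partial}).
\]

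The first step is to show that $(\lambda,\u^{\partial})$ solves \eqref{eq:eigenproblem_pu} if and only if $(\lambda,J\u^{\partial})$ solves \eqref{eq:eigenproblem_puOrig}. Suppose $(\lambda,\u^{\partial})$ solves \eqref{eq:eigenproblem_pu}, and let $\v\in H_{A(\para),0}(\os{})$ be arbitrary. Surjectivity of $J$ gives $\v=J\v^{\partial}$ for some $\v^{\partial}$. Substituting into the two identities above turns \eqref{eq:eigenproblem_pu} tested with $\v^{\partial}$ into \eqref{eq:eigenproblem_puOrig} tested with $\v$. The converse direction uses $J^{-1}$ in the same way. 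Hence $J$ maps each eigenspace $\ker(\cdot-\lambda)$ of one problem bijectively onto the corresponding eigenspace of the other, and the two problems have the same eigenvalues with the same multiplicities.

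The second step is to make this operator-theoretic. Let $K^{\partial}:=\pu{\para}^{*}\pu{\para}$ and $K:=\puOrig_{\para}^{*}\puOrig_{\para}$, with adjoints taken in the respective energy inner products. Both are compact by \cref{lem:compactness_pu} and the compactness of $\puOrig_{\para}$, and both are self-adjoint and nonnegative. The eigenproblems \eqref{eq:eigenproblem_pu} and \eqref{eq:eigenproblem_puOrig} are exactly $K^{\partial}\u^{\partial}=\lambda\u^{\partial}$ and $K\u=\lambda\u$. Since $J$ is unitary, $J^{*}=J^{-1}$, which gives $K^{\partial}=J^{-1}KJ$. The orthonormal eigenbases, the ordering of the eigenvalues and their multiplicities therefore correspond under $J$. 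Taking the span of the eigenfunctions for the $\nloc$ largest eigenvalues yields $\optSpacePuOrig(\para)=J(\optSpacePu(\para))$.

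The only delicate point is well-definedness when $\lambda_{\nloc}=\lambda_{\nloc+1}$, where "the $\nloc$ largest" is ambiguous. I would either assume, as the paper implicitly does, that the spaces are well defined, or state the result as: every admissible choice of eigenbasis for one problem maps under $J$ to an admissible choice for the other. The other points to check are routine: that real $\para$ makes the forms Hermitian (\cref{prop:inner_product_for_real}), so the adjoints exist in the stated inner products, and that the interior-domain constraint in $H_{A(\para),0}(\os{})$ is exactly the image of $J$, which is already part of \cref{lem:AharmEx_zero_isometry}.
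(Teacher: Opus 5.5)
Your proposal is correct and takes the same route as the paper. The paper's proof is the one-line remark that the claim follows from \cref{lem:AharmEx_zero_isometry} ($\AharmEx_{\para}^{0}$ is a bijective isometry for real $\para$) together with $\pu{\para}=\puOrig_{\para}\AharmEx_{\para}^{0}$; your unitary equivalence $K^{\partial}=J^{-1}KJ$ spells that remark out, and your caveat about the choice of eigenbasis when $\lambda_{\nloc}(\para)=\lambda_{\nloc+1}(\para)$ is a fair point the paper leaves implicit (\cref{assumption:spectral_gap} later rules it out on $\Yloc$).
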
 
\begin{proof}
    This follows directly from \cref{lem:AharmEx_zero_isometry} and the 
    definition of $\pu_{\para}$.
\end{proof}
For every $V\subset\htraceZR{\os{}}$, real $\para\in D$, and an interior domain,
\eqref{eq:def_AharmEx_zero} gives
\begin{equation}
    \label{eq:equal_local_spaces_AharmEx_zero}
    \AharmEx_{\para}^{0}V|_{\om{}}+\R
    =\AharmEx_{\para}V|_{\om{}}+\R.
\end{equation}
This proposition lets us work with the operator $\pu_{\para}$ on parameter-independent
spaces to establish our theoretical results, while in practice the optimal approximation 
spaces can still be computed as in MS-GFEM using~\eqref{eq:eigenproblem_puOrig}.
For real $\para\in \Yloc$, we define the operator
\begin{equation}
    \label{eq:def_operator_T}
    T_{\para} := (\pu{\para})^{\ast}\pu{\para},
\end{equation}
where $(\pu{\para})^{\ast} \in \linop{\HzC{\om{}}}{\htraceZC{\os{}}}$ denotes the Hilbert adjoint of 
$\pu{\para}$ with respect to the inner products $a_{\os{},\para}^{\partial}(\cdot, \cdot)$ 
and $a_{\om{},\para}(\cdot, \cdot)$.
\subsection{Holomorphy of the operator $T$ and of the spectral projection}
\label{sec:T_holomorphy}
Holomorphic dependence of $T$ on the parameter $\para$ is the first
of two ingredients to prove the regularity of the
parameter-to-subspace map.
Holomorphy of the spectral projection then follows from a spectral gap assumption.
\begin{theorem}
    \label{thm:T_is_holomorphic}
    The operator $T: \Yloc \to \linop{\htraceZC{\os{}}}$ 
    in~\eqref{eq:def_operator_T} admits a holomorphic extension 
    $T: \widetilde{D} \to \linop{\htraceZC{\os{}}}$ to an open connected neighborhood 
    $\widetilde{D} \subset D \subset \C^{\dloc}$ of $\Yloc$.
\end{theorem}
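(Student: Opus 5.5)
The plan is to write $T_{\para}$, for real $\para$, in terms of operators that are built only from the sesquilinear forms $a_{\os{},\para}^{\partial}$ and $a_{\om{},\para}$ and from $\pu_{\para}$. Each of these will be shown to extend holomorphically, and the product formula then gives the extension. To get parameter-independent reference inner products, I would fix a real base parameter, for instance $\para_0=0\in\Yloc$. On $\htraceZC{\os{}}$ I would use the inner product $\langle\cdot,\cdot\rangle_\partial:=a^{\partial}_{\os{},\para_0}$, and on $\HzC{\om{}}$ the inner product $\langle\cdot,\cdot\rangle_\omega:=a_{\om{},\para_0}$. By \cref{prop:inner_product_for_real} these are Hilbert inner products, and by uniform ellipticity they are equivalent to the standard ones. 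By Riesz representation, for every $\para\in D$ there are operators $G_{\para}\in\linop{\htraceZC{\os{}}}$ and $B_{\para}\in\linop{\HzC{\om{}}}$ with
\begin{equation*}
a^{\partial}_{\os{},\para}(\u^\partial,\v^\partial)=\langle G_{\para}\u^\partial,\v^\partial\rangle_\partial,\qquad a_{\om{},\para}(\u,\v)=\langle B_{\para}\u,\v\rangle_\omega .
\end{equation*}
For real $\para$, the defining identity of the adjoint, $a^{\partial}_{\os{},\para}(T_{\para}\u^\partial,\v^\partial)=a_{\om{},\para}(\pu_{\para}\u^\partial,\pu_{\para}\v^\partial)$, then reads $G_{\para}T_{\para}=(\pu_{\para})^{\dagger}B_{\para}\pu_{\para}$. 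Here $\dagger$ is the adjoint with respect to the fixed inner products. For real $\para$ this gives
\begin{equation*}
T_{\para}=G_{\para}^{-1}(\pu_{\para})^{\dagger}B_{\para}\pu_{\para}.
\end{equation*}
The right-hand side has a problem, however: $\para\mapsto(\pu_{\para})^{\dagger}$ is antiholomorphic. To fix this I would replace $(\pu_{\para})^{\dagger}$ by $(\pu_{\compconj{\para}})^{\dagger}$, which coincides with it for real $\para$ and is holomorphic in $\para$ whenever $\pu$ is. The same trick is already built into $a^\partial$ through $\AharmEx_{\compconj{\para}}$. The candidate extension is therefore
\begin{equation*}
T_{\para}:=G_{\para}^{-1}(\pu_{\compconj{\para}})^{\dagger}B_{\para}\pu_{\para}.
\end{equation*}

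The key steps, in order, are as follows.

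First, I would show that $\para\mapsto\AharmEx_{\para}\in\linop{\htraceZC{\os{}}}{H^1(\os{};\C)}$ is holomorphic on $D$. To do this, write $\AharmEx_{\para}\u^\partial=R\u^\partial-L_{\para}^{-1}\mathcal{A}_{\para}R\u^\partial$. Here $R$ is a fixed bounded right inverse of the trace, $\mathcal{A}_{\para}:H^1\to H^{-1}$ is the operator associated with $a_{\os{},\para}$, and $L_{\para}:\HzC{\os{}}\to H^{-1}$ is the corresponding Dirichlet operator. The map $\para\mapsto\mathcal{A}_{\para}$ is holomorphic because it depends linearly and boundedly on $A(\para)\in\Linf{\os{};\C}$, which is holomorphic by assumption. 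By uniform ellipticity (Lax–Milgram with $\mathrm{Re}A\ge\Amin$), $L_{\para}$ is invertible for all $\para\in D$, and operator inversion preserves holomorphy.

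Second, I would show that $\mathcal{M}_{\para}$ is holomorphic; it is a quotient of holomorphic functionals with a nonvanishing denominator. Consequently $\AharmEx^0_{\para}$ and $\pu_{\para}=\chi\AharmEx^0_{\para}$ are holomorphic, and $\para\mapsto\pu_{\compconj{\para}}^{\dagger}$ is holomorphic as well.

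Third, I would show that $B_{\para}$ is holomorphic, which is immediate from the linear dependence on $A(\para)$. For $G_{\para}$, note that $a^\partial_{\os{},\para}(\u^\partial,\v^\partial)=a_{\os{},\para}(\AharmEx_{\para}\u^\partial,\AharmEx_{\compconj{\para}}\v^\partial)$. Since the second argument enters antilinearly and depends on $\compconj{\para}$, this form is holomorphic in $\para$ for fixed $\u^\partial,\v^\partial$. Via Riesz, with the second step giving holomorphy of $\AharmEx_{\compconj{\para}}^{\dagger}$, this yields operator-valued holomorphy of $G_{\para}$.

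The main obstacle is the fourth step: the invertibility of $G_{\para}$ for complex $\para$, since the form $a^\partial_{\os{},\para}$ is then no longer Hermitian or coercive. I would handle this by a perturbation argument rather than a global one. For real $\para\in\Yloc$, $G_{\para}$ is invertible, because $a^\partial_{\os{},\para}$ is an inner product equivalent to $\langle\cdot,\cdot\rangle_\partial$ (\cref{prop:inner_product_for_real}, together with uniform bounds on the harmonic extension). The set of invertible operators is open, and $G$ is continuous. Hence each real point has a complex neighborhood on which $G_{\para}$ is invertible. Because $\Yloc$ is compact, finitely many such neighborhoods cover it. I would then take $\widetilde{D}\subset D$ to be the connected component containing $\Yloc$ of the union of these neighborhoods; alternatively, one can take a uniform $\varepsilon$-neighborhood of $\Yloc$, which is convex and hence connected. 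On $\widetilde{D}$, the map $\para\mapsto G_{\para}^{-1}$ is holomorphic. The product of holomorphic operator families is holomorphic, so $T$ is holomorphic on $\widetilde{D}$ and agrees with \eqref{eq:def_operator_T} on $\Yloc$, which completes the argument.
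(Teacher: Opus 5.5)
Your proposal is correct and follows essentially the same route as the paper. Both arguments use reference inner products at a fixed $\para_0$, the change-of-inner-product operators (your $G_{\para}$ and $B_{\para}$ are the paper's $G^{\partial}_{\para}$ and $G_{\para}$), the extension $T_{\para}=(G^{\partial}_{\para})^{-1}(\pu{\compconj{\para}})^{\ast}_{0}G_{\para}\pu_{\para}$ with the conjugated parameter in the adjoint factor, holomorphy of $\AharmEx$ by lifting the boundary data and inverting the Dirichlet operator, and invertibility of $G^{\partial}_{\para}$ near $\Yloc$ via openness of the invertible operators.

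Two small refinements on your side: you derive $G_{\para}T_{\para}=(\pu_{\para})^{\dagger}B_{\para}\pu_{\para}$ directly, without the self-adjointness used in \cref{lem:adjoint_relation}, and you choose $\widetilde{D}$ explicitly connected. One detail to add: take $D$ and $\widetilde{D}$ symmetric about the real axis, as the paper does, so that $\AharmEx_{\compconj{\para}}$ and $\pu{\compconj{\para}}$ are defined.
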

The
proof decomposes $T$ into simpler operators and is given in \Cref{sec:proof_holomorphy_T}.
To show that the eigenspaces
of $T$ depend holomorphically on the parameters requires an assumption on the spectral
structure of the MS-GFEM eigenproblem, which we will make for the rest of the paper.
\begin{assumption}
    \label{assumption:spectral_gap}
    We assume that the spectrum of $T$ is separated into two parts, that is,
    there exist a constant $\gap>0$ and an integer $\nloc\geq1$ such that
    $$
        \inf_{\para\in\Yloc}\bigl(\lambda_{\nloc}(\para)-\lambda_{\nloc+1}(\para)\bigr) \geq \gap.
    $$
    Here, since for all $\para\in\Yloc$, the operator $T_{\para}$ is compact by \cref{lem:compactness_pu}
    and self-adjoint with respect to the inner product $a_{\os{},\para}^{\partial}(\cdot,\cdot)$, we order the eigenvalues in decreasing order
    (counted with multiplicity and completed by zeros if necessary) and set
    $$
        \sigma_1(\para) = \{\lambda_1(\para), \ldots, \lambda_{\nloc}(\para)\}, \quad
        \sigma_2(\para) = \sigma(T_{\para})\setminus\sigma_1(\para).
    $$
\end{assumption}
We first define the spectral projection $\specProj: \Yloc \to \linop{\htraceZC{\os{}}}$ by
\begin{equation}
    \label{eq:def_spectral_projection}
    \specProj_{\para} := \frac{1}{2\pi i}\int_{\Lambda(\para)}(\lambda - T_{\para})^{-1} d\lambda,
\end{equation}
where $\Lambda(\para)$ is a closed curve in the resolvent set 
of $T_{\para}$ separating $\sigma_1(\para)$ and $\sigma_2(\para)$ and enclosing $\sigma_1(\para)$ in the positive orientation.

For fixed $\para$ the spectral projection $\specProj_{\para}$ is independent of the choice of 
$\Lambda(\para)$, see \cite[Lemma 6.1]{hislop2012introduction}. 
Moreover, due to \cite[Theorem IV.3.16]{kato2013perturbation}, at any $\widetilde{\para} \in \Yloc$,
there exists a complex ball $U\subset\widetilde{D}$ centered at $\widetilde{\para}$ such that
the same curve $\Lambda$ separates the spectrum of $T_{\para}$ into the continued clusters $\sigma_1$ and $\sigma_2$
for all $\para \in U$. 
Thus, locally, the curve can be chosen independently of the parameter $\para$. 
For sufficiently small balls, the local holomorphic Riesz projections agree
at real parameters, where they select the dominant eigenspace. Since overlaps
are connected and contain open real subsets, the identity theorem gives
agreement throughout. The union of the balls, again denoted by $\widetilde{D}$,
therefore carries a well-defined rank-$\nloc$ projection $\specProj$. Also for complex
parameters, $\sigma_2(\para)=\sigma(T_{\para})\setminus\sigma_1(\para)$.
\begin{lemma}
    \label{prop:spectral_projection_properties}
    The spectral projection has the following properties:
    \begin{enumerate}
        \item[(i)] $\specProj_{\para}$ is a projection, that is $(\specProj_{\para})^2 = \specProj_{\para}$.
        \item[(ii)] For all real $\para\in \widetilde{D}$,
        $\specProj_{\para}$ is self-adjoint with respect to the inner product
        $a_{\os{},\para}^{\partial}(\cdot,\cdot)$.
        \item[(iii)] For all real $\para\in \widetilde{D}$, $\mathrm{range}(\specProj_{\para}) = \optSpacePu(\para)$. 
        \item[(iv)] For all real $\para\in \widetilde{D}$, $\specProj_{\para}$ maps
                    $\htraceZR{\os{}}$ into itself.
        
    \end{enumerate}
\end{lemma}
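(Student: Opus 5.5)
For (i) I will use the standard Riesz-projection argument; for (ii)--(iv) I will exploit that at real parameters $T_{\para}$ is compact and self-adjoint for $a^{\partial}_{\os{},\para}$ and maps real functions to real functions, so the Riesz projection is the orthogonal projection onto the dominant eigenspace.

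For (i), I fix $\para\in\widetilde D$ and two admissible contours $\Lambda,\Lambda'$ enclosing $\sigma_1(\para)$, with $\Lambda'$ lying strictly inside $\Lambda$. Independence of the contour, \cite[Lemma 6.1]{hislop2012introduction}, allows this choice. I write $\specProj_{\para}^2$ as a double integral over $\Lambda\times\Lambda'$ and apply the first resolvent identity
\[
(\lambda-T_{\para})^{-1}(\mu-T_{\para})^{-1}
=\frac{(\mu-T_{\para})^{-1}-(\lambda-T_{\para})^{-1}}{\lambda-\mu}.
\]
Cauchy's integral formula then gives $\frac{1}{2\pi i}\int_{\Lambda}\frac{d\lambda}{\lambda-\mu}=1$ for $\mu\in\Lambda'$, and $\frac{1}{2\pi i}\int_{\Lambda'}\frac{d\mu}{\lambda-\mu}=0$ for $\lambda\in\Lambda$. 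This yields $\specProj_{\para}^2=\specProj_{\para}$. The same argument works at complex parameters, since it uses only the resolvent.

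For (ii)--(iii), I take $\para$ real and use that $T_{\para}=(\pu{\para})^*\pu{\para}$ is compact and self-adjoint on the Hilbert space of \cref{prop:inner_product_for_real}. By the spectral theorem, $T_{\para}=\sum_k\lambda_k\, a^{\partial}_{\os{},\para}(\cdot,e_k)\,e_k$ for an $a^{\partial}_{\os{},\para}$-orthonormal eigenbasis $(e_k)$, completed by an orthonormal basis of the kernel. The resolvent is then diagonal, $(\lambda-T_{\para})^{-1}=\sum_k(\lambda-\lambda_k)^{-1}a^{\partial}_{\os{},\para}(\cdot,e_k)e_k$, and the series converges in operator norm uniformly for $\lambda\in\Lambda$ because $\Lambda$ stays at positive distance from the spectrum. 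Integrating term by term, the residue calculus gives
\[
\specProj_{\para}=\sum_{k\le\nloc}a^{\partial}_{\os{},\para}(\cdot,e_k)\,e_k.
\]
This is the orthogonal projection onto $\mathrm{span}\{e_1,\dots,e_\nloc\}$, which proves (ii). Since $a_{\om{},\para}(\pu{\para}\u,\pu{\para}\v)=a^{\partial}_{\os{},\para}(T_{\para}\u,\v)$, the eigenproblem \eqref{eq:eigenproblem_pu} is exactly the eigenproblem for $T_{\para}$. Hence that span equals $\optSpacePu(\para)$, which proves (iii). Assumption \ref{assumption:spectral_gap} ensures $\lambda_\nloc>\lambda_{\nloc+1}$, so the span is well defined.

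For (iv), I introduce the complex conjugation $J\u=\compconj{\u}$ and show that $JT_{\para}J=T_{\para}$ for real $\para$. This requires the following checks.
\begin{itemize}
\item The extension commutes with conjugation: since $A(\para)$ is real, $\AharmEx_{\para}J=J\AharmEx_{\para}$.
\item $\mathcal M_{\para}(\compconj{\u})=\compconj{\mathcal M_{\para}(\u)}$, so $\AharmEx^0_{\para}$ and $\pu{\para}$ commute with $J$.
\item The forms satisfy $a(J\u,J\v)=\compconj{a(\u,\v)}$, which shows that the adjoint also commutes with $J$.
\end{itemize}
Then $J(\lambda-T_{\para})^{-1}J=(\compconj\lambda-T_{\para})^{-1}$. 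I choose $\Lambda$ symmetric under conjugation, say a circle centred on the real axis, which is possible since $\sigma_1(\para)\subset\R$. Conjugating the contour integral and reversing orientation then gives $J\specProj_{\para}J=\specProj_{\para}$. Alternatively, the real eigenfunctions can be chosen real because $T_{\para}$ commutes with $J$, and (iv) follows directly from the formula in (ii).

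The main obstacle is this commutation with $J$, in particular for the Hilbert adjoint $(\pu{\para})^*$ and for the mean-correction $\mathcal M_{\para}$ on interior domains. The rest is standard spectral calculus.
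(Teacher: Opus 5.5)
Your argument is correct. For (i) it is the nested-contour/resolvent-identity proof that the paper cites from \cite[Proposition~6.3]{hislop2012introduction}. Your fallback argument for (iv) is exactly the paper's: a real eigenbasis because $T_{\para}$ commutes with $J$, followed by the explicit formula. The paper checks that $\pu{\para}$ and $(\pu{\para})^*$ preserve real-valuedness by comparing imaginary parts, which is equivalent to your check that they commute with $J$. Your symmetric-contour finish is a harmless variant.

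The genuine difference is in (ii)--(iii). The paper stays with general Riesz-projection facts. It transfers the adjoint through the resolvent integral as in \cite[Proposition~6.3]{hislop2012introduction}. It identifies the range from $T_{\para}\specProj_{\para}=\specProj_{\para}T_{\para}$ together with the splitting of the spectrum into $\sigma_1(\para)$ on $\mathrm{range}(\specProj_{\para})$ and $\sigma_2(\para)$ on $\mathrm{range}(I-\specProj_{\para})$.

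You instead diagonalize $T_{\para}$ by the spectral theorem and compute $\specProj_{\para}=\sum_{k\le\nloc}a^{\partial}_{\os{},\para}(\cdot,e_k)e_k$ outright. This gives (ii), (iii) and the formula used in (iv) in one stroke. It also makes explicit the completeness of the eigenbasis, which the paper needs but leaves implicit when passing from ``each eigenvector lies in the range or the kernel of $\specProj_{\para}$'' to equality of the range. The paper's route, in turn, uses only generic properties of Riesz projections.

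One justification in your (ii) is false as written. The diagonal series for $(\lambda-T_{\para})^{-1}$ does not converge in operator norm. The norm of its tail beyond index $N$ is $\sup_{k>N}|\lambda-\lambda_k|^{-1}$, which tends to $|\lambda|^{-1}\neq 0$, since the $\lambda_k$ accumulate at $0$ or $0$ is an eigenvalue. What holds, and suffices, is strong convergence uniformly in $\lambda\in\Lambda$. For fixed $\u^{\partial}$ the tail is bounded by $\mathrm{dist}(\Lambda,\sigma(T_{\para}))^{-1}\bigl(\sum_{k>N}|a^{\partial}_{\os{},\para}(\u^{\partial},e_k)|^2\bigr)^{1/2}$. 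So you may integrate term by term after applying the operator to $\u^{\partial}$.

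Alternatively, split off the finite-rank part $\sum_{k\le\nloc}$. The remainder is the resolvent of $T_{\para}$ restricted to the orthogonal complement of $\mathrm{span}\{e_1,\dots,e_{\nloc}\}$. It is holomorphic on and inside $\Lambda$, because that region avoids $\sigma_2(\para)$, so its contour integral vanishes by Cauchy's theorem.
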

\begin{proof}
        (i)  Similar to \cite[Proposition 6.3]{hislop2012introduction}.\\[0.5em]
        (ii) For real $\para$, the operator
        $T_{\para}$ is self-adjoint by definition. Similar to \cite[Proposition 6.3
        (3)]{hislop2012introduction}, one can conclude that $\specProj(\para)$ is self-adjoint.\\[0.5em]
        (iii) By \cite[Proposition 6.9]{hislop2012introduction}  
        $T_{\para}$ commutes with $\specProj_{\para}$. Thus, given an eigenpair 
        $(\lambda, \u^{\partial}) \in \R \times \htraceZC{\os{}}$ of $T_{\para}$, we have
        that $(\lambda, \specProj_{\para} \u^{\partial})$ and $(\lambda,
        (I-\specProj_{\para}) \u^{\partial})$ are also eigenpairs of $T_{\para}$. Moreover,
        by \cite[Proposition 6.9]{hislop2012introduction} the spectrum 
        of $T_{\para}\specProj_{\para}$ is $\sigma_1(\para)$ and the spectrum of
        $T_{\para}(I-\specProj_{\para})$ is $\sigma_2(\para)$. Thus, either 
        $\specProj_{\para} \u^{\partial} = 0$ or $(I-\specProj_{\para}) \u^{\partial} = 0$,
        which directly implies $\mathrm{range}(\specProj_{\para}) = \optSpacePu(\para)$.\\[0.5em]
        (iv) By definition, 
        $\pu_{\para}$ maps $\htraceZR{\os{}}$ into $\HzR{\om{}}$ for real $\para$.
        To see that $(\pu_{\para})^*$ maps $\HzR{\om{}}$ into $\htraceZR{\os{}}$, we let 
        $\u \in \HzR{\om{}}$. Then, for all $\v^{\partial} \in \htraceZR{\os{}}$, we have
        \begin{equation*}
            a^{\partial}_{\os{},\para}(\mathrm{Re}((\pu_{\para})^* \u), \v^{\partial}) + i\, a^{\partial}_{\os{},\para}(\mathrm{Im}((\pu_{\para})^* \u), \v^{\partial})
            = a^{\partial}_{\os{},\para}((\pu_{\para})^* \u, \v^{\partial})
            = a_{\om{},\para}(\u, \pu_{\para} \v^{\partial}) \in \R,
        \end{equation*}
        since $\u$, $\pu_{\para} \v^{\partial}$, and $A(\para)$ are real-valued. 
        Comparing imaginary parts gives $a^{\partial}_{\os{},\para}(\mathrm{Im}((\pu_{\para})^* \u), \v^{\partial}) = 0$ 
        for all $\v^{\partial} \in \htraceZR{\os{}}$. Since $a^{\partial}_{\os{},\para}$ is an 
        inner product on $\htraceZR{\os{}}$, we get $\mathrm{Im}((\pu_{\para})^* \u) =
        0$.
        
        Now, since $T_{\para}$ is self-adjoint and maps $\htraceZR{\os{}}$ into
        itself, the orthonormal eigenfunctions of $T_{\para}$ spanning
        $\mathrm{range}(\specProj_{\para})$ can be chosen real valued. We denote them by
        $\u^\partial_1, \ldots, \u^{\partial}_{\nloc{}}$. 
        Then, for $\u^{\partial}\in \htraceZR{\os{}}$, we have $\specProj_{\para} \u^{\partial} = \sum_{i=1}^{\nloc{}} a^{\partial}_{\os{},\para}(\u^{\partial}, \u^{\partial}_i) \u^{\partial}_i \in \htraceZR{\os{}}$.
\end{proof}

\begin{proposition}
    \label{prop:holomorphy_spectral_projection}
    The spectral projection
    $
        \specProj : \widetilde{D} \to \linop{\htraceZC{\os{}}}
    $
    is holomorphic.
\end{proposition}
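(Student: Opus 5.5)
Holomorphy is a local property, so the plan is to work ball by ball in the cover of $\widetilde{D}$ built above. Fix $\widetilde{\para}\in\Yloc$ and let $U\subset\widetilde{D}$ be a complex ball centred at $\widetilde{\para}$. On $U$ a single closed curve $\Lambda$ lies in the resolvent set of $T_{\para}$ for every $\para\in U$ and encloses exactly the cluster $\sigma_1(\para)$. Since the projections on overlapping balls agree by the identity-theorem argument preceding \cref{prop:spectral_projection_properties}, it suffices to show that
\begin{equation*}
    \para\mapsto \frac{1}{2\pi i}\int_{\Lambda}(\lambda-T_{\para})^{-1}\,d\lambda
\end{equation*}
is holomorphic on $U$ as a map into $\linop{\htraceZC{\os{}}}$. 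Holomorphy in several variables for Banach-space-valued maps can be checked via continuity together with separate holomorphy in each variable (Hartogs), or simply via local power series. The key point is that the contour is $\para$-independent on $U$.

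First I would show that $(\lambda,\para)\mapsto R(\lambda,\para):=(\lambda-T_{\para})^{-1}$ is jointly continuous on $\Lambda\times U$ and holomorphic in $\para$ for each fixed $\lambda\in\Lambda$. By \cref{thm:T_is_holomorphic}, $\para\mapsto\lambda-T_{\para}$ is holomorphic, and it is invertible for $\para\in U$ and $\lambda\in\Lambda$. Inversion $B\mapsto B^{-1}$ is holomorphic (indeed analytic via the Neumann series) on the open set of invertible operators in $\linop{\htraceZC{\os{}}}$. So $\para\mapsto R(\lambda,\para)$ is a composition of holomorphic maps. Joint continuity follows from the second resolvent identity and the Neumann series:
\begin{equation*}
    R(\lambda,\para)-R(\lambda_0,\para_0)=R(\lambda,\para)\bigl[(\lambda_0-\lambda)+(T_{\para}-T_{\para_0})\bigr]R(\lambda_0,\para_0),
\end{equation*}
together with local boundedness of $\|R\|$ near $(\lambda_0,\para_0)$.

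Second, I would interchange holomorphy and integration. Since $\Lambda$ is compact, $R$ is uniformly bounded on $\Lambda\times K$ for any compact $K\subset U$. For each $\lambda$, Cauchy's integral formula in polydiscs then gives power series expansions of $R(\lambda,\cdot)$ around any $\para_0\in U$ whose coefficients are bounded uniformly in $\lambda$. Integrating term by term over $\Lambda$ yields a convergent power series for $\specProj_{\para}$. Alternatively, one can argue weakly: for every functional $\phi\in\linop{\htraceZC{\os{}}}'$ the scalar function $\phi(\specProj_{\para})$ is holomorphic by Morera/Fubini, and $\specProj$ is locally bounded. Weak holomorphy plus local boundedness gives strong holomorphy.

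I expect the main obstacle to be not the calculus but the bookkeeping that makes the contour genuinely $\para$-independent and the gluing consistent. The argument must ensure that $\Lambda$ avoids $\sigma(T_{\para})$ uniformly on $U$, which is where \cite[Theorem IV.3.16]{kato2013perturbation} and \cref{assumption:spectral_gap} enter, possibly after shrinking $U$. I would take this from the construction preceding \cref{prop:spectral_projection_properties} rather than reprove it. The uniform resolvent bound on $\Lambda\times K$ then comes from continuity on a compact set.
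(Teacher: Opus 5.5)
Your proof is correct, but it takes a different route from the paper.

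\textbf{The paper's route.} It applies \cite[VII Theorem 1.7]{kato2013perturbation}, which is a one-parameter result, to get holomorphy of $\specProj$ in each complex variable separately. It then upgrades this to joint holomorphy by applying Hartogs' theorem \cite[Theorem 2.2.8]{hormander1973introduction} to the scalar maps $\varphi\circ\specProj$, for all bounded $\C$-linear $\varphi:\linop{\htraceZC{\os{}}}\to\C$. Finally it passes from weak to strong holomorphy via \cite[Theorem 4]{barletta2009vector}.

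\textbf{Your route.} You work in all $\dloc$ variables at once. Since \cref{thm:T_is_holomorphic} already gives joint holomorphy of $T$, and operator inversion is holomorphic, the resolvent $R(\lambda,\para)$ is jointly holomorphic in $\para$ and jointly continuous on $\Lambda\times U$. The locally fixed contour then lets holomorphy pass through the Riesz integral via uniform Cauchy estimates.

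\textbf{What each buys.} The paper's version is a three-line citation chain. However, it relies on comparatively heavy tools (Hartogs' separate-analyticity theorem and a vector-valued weak-to-strong theorem) that are stronger than needed here, since continuity is available. Your version is self-contained and essentially reproduces Kato's contour argument directly in several variables. It also makes explicit that the only inputs are joint holomorphy of $T$ and the $\para$-independent contour supplied by \cite[Theorem IV.3.16]{kato2013perturbation}.

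One small terminological correction: the fact that continuity plus separate holomorphy implies joint holomorphy is Osgood's lemma, not Hartogs' theorem. Your argument does not actually need either.
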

\begin{proof}
    By Assumption \ref{assumption:spectral_gap} and \cite[VII Theorem 1.7]{kato2013perturbation}, the mapping $\specProj$ is holomorphic in each 
    variable separately. 
    Since $\specProj$ is holomorphic in each variable separately, we have that
    $\varphi\circ \specProj$ is holomorphic in each variable separately for each
    $\C$-linear functional $\varphi:\linop{\htraceZC{\os{}}}\to\C$. Thus, by
    \cite[Theorem 2.2.8]{hormander1973introduction} the map $\varphi\circ \specProj$ is
    holomorphic for all such $\varphi$, hence $\specProj$ is
    weakly holomorphic. Using \cite[Theorem 4]{barletta2009vector}, we conclude that $\specProj$ is holomorphic.
\end{proof}

\subsection{The parameter-to-subspace map on the Grassmannian}
We have shown that the spectral projection $\specProj$ depends holomorphically on
$\para$. We now view the optimal local space as a point on a Grassmannian
and use this holomorphy to prove that the parameter-to-subspace map is real
analytic, which is the regularity required for the interpolation in
\Cref{sec:interpolation}. 

Following \cite[Section 2.2]{biliotti2014properly} 
we first recall the relevant manifolds.
Let $V$ be
a Hilbert space with inner product $(\cdot, \cdot)_V$, and let $k \in \mathbb{N}$. The space
$\linop{\R^k}{V}$ of bounded linear operators from $\R^k$ to $V$ is a Hilbert space with the inner product
\begin{equation*}
    (L_1, L_2)_{\linop{\R^k}{V}} := \mathrm{tr}(L_1^* L_2), \quad L_1, L_2 \in \linop{\R^k}{V},
\end{equation*}
where $\mathrm{tr}$ denotes the trace. 
The Stiefel manifold $\mathrm{St}_k(V)$ is the subset of $\linop{\R^k}{V}$ 
defined by 
\begin{equation*}
    \mathrm{St}_k(V) := \{ L \in \linop{\R^k}{V} : L^* L = I_k \},
\end{equation*}where $I_k$ is the identity operator on $\R^k$. The Stiefel 
manifold is a submanifold of $\linop{\R^k}{V}$ and hence inherits a Riemannian structure
from $\linop{\R^k}{V}$. The tangent space of $\mathrm{St}_k(V)$ at $L \in \mathrm{St}_k(V)$ is given by
\begin{equation*}
    T_L \mathrm{St}_k(V) = \{ X \in \linop{\R^k}{V} : L^* X + X^* L = 0 \}.
\end{equation*}
Moreover, the orthogonal group $\mathcal{O}(k)$ acts 
freely and isometrically on $\mathrm{St}_k(V)$ by right composition, and the quotient 
$\GrR_k(V) :=  
\mathrm{St}_k(V)/\mathcal{O}(k)$ is a Riemannian manifold with the quotient
metric. 
It is called the Grassmannian of $k$-dimensional subspaces of $V$.
The canonical projection $\pi: \mathrm{St}_k(V) \to \GrR_k(V)$ is a
Riemannian submersion. 
\begin{remark}
    \label{remark:analytic_manifolds}
$\linop{\R^k}{V}$, $\mathrm{St}_k(V)$ and $\GrR_k(V)$ are all real analytic
Riemannian manifolds, because the results used in their construction, in
particular the preimage theorem \cite[Theorem 4.J]{zeidler1986_nonlinear} and
the quotient manifold construction \cite[III.1.5. Proposition
10]{bourbaki1989lie}, have analytic analogues. By naturality of the Riemannian
metrics, and since the action of the orthogonal group on the Stiefel manifold is
analytic, the Riemannian metrics on the Grassmannian and the Stiefel manifold
are analytic.
\end{remark}
Let $\para_0$ be a fixed parameter in $\Yloc$. We choose $V=\htraceZR{\os{}}$
and endow it with the scalar product
$a^{\partial}_{\para_0,\os{}}(\cdot,\cdot)$. Let $\GrR_k(\htraceZR{\os{}})$ be
the Grassmannian of $k$-dimensional subspaces of $\htraceZR{\os{}}$. Note that
the construction of the Riemannian metric on $\GrR_k(\htraceZR{\os{}})$ depends
on the above chosen fixed scalar product. Let us denote by $D_{\R}$ the open (in
$\R^{\dloc}$) and connected component of $\widetilde{D}\cap \R^{\dloc}$ that
contains $\Yloc$. We define the map 
\begin{equation*}
    \specProjR:= \mathrm{range} \circ \specProj|_{D_{\R}}: 
    D_{\R} \to \GrR_{\nloc}(\htraceZR{\os{}}),
\end{equation*}
which satisfies $\specProjR(\para) = \optSpacePu(\para)\cap \htraceZR{\os{}}$ 
for all $\para\in D_{\R}$.

Moreover, we define 
\begin{align*}
    \Hdir{\os{}}
    := 
    \{\,\u \in H^1(\os{};\R) : \u|_{\partial\os{}\cap\partial\Omega} = 0 \}
\end{align*}
endowed with the standard $H^1(\os{})$ scalar product.
Let
\begin{equation*}
    \specProjPuR:D_{\R}\to
    \GrR_{\nloc}\bigl(\Hdir{\os{}}\bigr),
    \qquad
    \specProjPuR(\para)
    :=\AharmEx_{\para}^{0}\bigl(\specProjR(\para)\bigr),
\end{equation*}
where $\Hdir{\os{}}$ is equipped with its standard scalar product. By
\cref{prop:optimal_spaces_pu_and_puOrig} and because $A(\para)$ is real for
$\para\in D_{\R}$,
\begin{equation*}
    \specProjPuR(\para)
    =\optSpacePuOrig(\para)\cap H^1(\os{};\R),
    \qquad \para\in D_{\R}.
\end{equation*}
\begin{theorem}
\label{thm:real_analyticity_spectral_projection}
Let Assumption \ref{assumption:spectral_gap} hold. 
Then $\specProjR$ and $\specProjPuR$ are real analytic.
\end{theorem}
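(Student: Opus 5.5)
The plan is to obtain real analyticity of $\specProjR$ from the holomorphy of $\specProj$ in \cref{prop:holomorphy_spectral_projection}, by building explicit real analytic local charts of the Grassmannian around each point $\specProjR(\para_1)$, $\para_1\in D_{\R}$. The statement for $\specProjPuR$ will then follow by composing with the extension $\AharmEx^0_{\para}$, which we also show depends analytically on $\para$.

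\textbf{Step 1: real analyticity of the restricted projection.} Restricting the holomorphic map $\specProj:\widetilde D\to\linop{\htraceZC{\os{}}}$ to $D_{\R}$ gives a real analytic map: it has locally convergent power series with complex operator coefficients, and real variables may be inserted. By \cref{prop:spectral_projection_properties}(iv), $\specProj_{\para}$ maps $\htraceZR{\os{}}$ into itself for real $\para$. Taking real parts of the coefficients, or composing with the bounded real-linear restriction-to-real-subspace map, then shows that $\para\mapsto \specProj_{\para}|_{\htraceZR{\os{}}}\in\linop{\htraceZR{\os{}}}$ is real analytic.

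\textbf{Step 2: local analytic frames.} Fix $\para_1\in D_{\R}$ and an $a^\partial_{\para_0}$-orthonormal basis $L_1\in\mathrm{St}_{\nloc}(V)$ of $\specProjR(\para_1)$. Define $F(\para):=\specProj_{\para}L_1\in\linop{\R^{\nloc}}{V}$, which is real analytic by Step 1. Since $F(\para_1)=L_1$ is injective, $F(\para)$ is injective near $\para_1$ and its range is $\specProjR(\para)$, because the rank is constant equal to $\nloc$. Orthonormalize with respect to the fixed scalar product $a^\partial_{\para_0}$ by setting $L(\para):=F(\para)(F(\para)^*F(\para))^{-1/2}$. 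The Gram matrix $F^*F$ is a real analytic map into symmetric positive definite $\nloc\times\nloc$ matrices, and the inverse square root is analytic there. Hence $L$ is a real analytic map into $\mathrm{St}_{\nloc}(V)$. Since $\specProjR=\pi\circ L$ near $\para_1$, and the canonical projection $\pi$ is analytic by \cref{remark:analytic_manifolds}, $\specProjR$ is real analytic near $\para_1$.

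\textbf{Step 3: the main obstacle, $\specProjPuR$.} Here $\AharmEx^0_{\para}$ maps $\htraceZR{\os{}}$ into $\Hdir{\os{}}$. We need $\para\mapsto\AharmEx^0_{\para}\in\linop{\htraceZR{\os{}}}{\Hdir{\os{}}}$ to be real analytic; this should come out of the decomposition used for \cref{thm:T_is_holomorphic}.

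\emph{Analyticity of the extension.} Write $\AharmEx_{\para}u=\mathcal R u-B_{\para}^{-1}(\text{lifted data})$, where $\mathcal R$ is a fixed bounded right inverse of the trace. The correction solves a Dirichlet problem whose operator $B_{\para}:\HzC{\os{}}\to H^{-1}$ depends holomorphically on $\para$ because $A$ does. Inversion is holomorphic on invertible operators, and uniform ellipticity gives invertibility. The functional $\mathcal M_{\para}$ in \eqref{eq:def_M_para} is a quotient of holomorphic maps with nonvanishing denominator. Together these make $\AharmEx^0_{\para}$ holomorphic, hence real analytic on $D_{\R}$, and it preserves real-valuedness.

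\emph{Injectivity and frames.} By \cref{lem:AharmEx_zero_isometry}, $\AharmEx^0_{\para}$ is injective. Repeat Step 2 with $G(\para):=\AharmEx^0_{\para}L(\para)$, orthonormalized in the $H^1(\os{})$ scalar product. $G$ is real analytic, and $G(\para)$ is injective with range $\specProjPuR(\para)$, so $\specProjPuR=\pi\circ G(G^*G)^{-1/2}$ is real analytic.

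The delicate points are two. First, justify that the Stiefel projection and the orthonormalization are analytic in the infinite-dimensional setting; this reduces to finite-dimensional matrix functions and bounded bilinear maps. Second, check that the real restriction in Step 1 preserves analyticity.
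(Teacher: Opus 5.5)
Your proposal is correct and follows essentially the paper's argument. You build a local real analytic frame by applying $\specProj_{\para}$ to a fixed basis at a base point, get injectivity nearby from continuity of the Gram matrix, and for $\specProjPuR$ compose with $\AharmEx^{0}_{\para}$, whose analyticity you derive from the same Dirichlet-lift decomposition as \cref{prop:partition_of_unity_holomorphic}. The one difference is how you pass from the frame to the Grassmannian: you do it explicitly, via the orthonormalized Stiefel lift $F(F^*F)^{-1/2}$ composed with the analytic quotient map $\pi$, whereas the paper cites the real analogue of the subbundle characterization in \cite[Lemma 2.1]{martin2022holomorphic}.
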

\begin{proof}
To see that $\specProjR$ is analytic, we use the real analogue of \cite[Lemma
2.1]{martin2022holomorphic}, which states that $\specProjR$ is real analytic if and 
only if $\specProjR[D_{\R}]:= \{ (\para, \specProjR(\para)) : \para \in D_{\R}\}$ is a real analytic subbundle of $D_{\R} \times
\htraceZR{\os{}}$. This is equivalent to the existence of a local real analytic
frame of $\specProjR[D_{\R}]$. It can be obtained by applying the real analytic spectral
projection $\specProj$ to a fixed reference basis.

Fix any basis $(\u^{\partial}_1, \ldots, \u^{\partial}_{\nloc})$ of $\specProjR(\para_0)$ and define
$\v^{\partial}_i(\para) := \specProj_{\para} \u^{\partial}_i$ for $i = 1, \ldots, \nloc$.
Each $\v^{\partial}_i$ is real analytic since $\para \mapsto  \specProj_{\para}$ is 
real analytic. At $\para_0$, we have $\v^{\partial}_i(\para_0) = \specProj_{\para_0} \u^{\partial}_i = \u^{\partial}_i$ 
since $\u^{\partial}_i \in \specProjR(\para_0)$.
The Gram determinant $\det\big(a^{\partial}_{\para_0,\os{}}(\v^{\partial}_i(\para), \v^{\partial}_j(\para))\big)_{i,j=1}^{\nloc}$ is 
real analytic and nonzero at $\para_0$, hence nonzero in a neighborhood $U$ 
of $\para_0$. This proves linear independence on $U$.
Finally, $\v^{\partial}_i(\para) = \specProj_{\para} \u^{\partial}_i \in \mathrm{range}(\specProj_{\para}) 
= \specProjR(\para)$, and since we have $\nloc$ linearly independent vectors in 
the $\nloc$-dimensional space $\specProjR(\para)$, they form a basis.

The proof that $\specProjPuR$ is real analytic is analogous, using the fact that 
the operator $\AharmEx_{\para}^{0}$ is real analytic in $\para$ by \cref{prop:partition_of_unity_holomorphic}.
\end{proof}

\section{Sparse grid Grassmannian interpolation of the optimal local spaces}
\label{sec:interpolation}
In \cref{thm:real_analyticity_spectral_projection}, we established that the maps
$\specProjR$ and $\specProjPuR$ are real analytic. To fix ideas for
the Grassmannian interpolation, we outline the
construction for $\specProjR$. The outline is informal, and the conditions under
which each step is well defined are stated in the analysis below. Since the
evaluation of this map involves solving an expensive eigenproblem, we seek an
efficient approximation. To achieve this, we follow the ideas in
\cite{amsallem2008interpolation} and approximate the map
$\specProjR: D_{\R} \to M := \GrR_{\nloc}(\htraceZR{\os{}})$
via interpolation on the Grassmannian $M$. Since the Grassmann
manifold $M$ does not
have a linear structure, we cannot directly apply standard interpolation
techniques. Instead, we interpolate the manifold-valued functions $\specProjR$ 
after composing them with the Riemannian logarithm
at a base point $p\in M$, resulting in a function $\specProjRTan = \log_p
\circ \specProjR: \Yloc \to T_p M$ with values in the tangent space $T_p M$,
which is a Hilbert space. This composition requires $\specProjR(\para)$ to lie
in the domain of $\log_p$ for all $\para \in \Yloc$, which we make precise in
\cref{prop:exp_diffeo_and_radial_geodesics_minimal}. Since the codomain of
$\specProjRTan$ is a linear space, standard interpolation techniques can be
applied. The resulting interpolated function~$\specProjRTanApprox$ is then
mapped back via the Riemannian exponential map, yielding the interpolated
function $\specProjRApprox = \exp_p \circ \specProjRTanApprox: \Yloc \to M$. 

The
roadmap for the analysis is as follows: First, we recall results on sparse grid
interpolation and state error estimates for functions admitting a holomorphic
extension. Second, we combine these with the regularity established in
\cref{sec:ms_gfem} to bound the interpolation error of $\specProjRTan$ in the
tangent space, and transfer this bound to an error for $\specProjRApprox$ on the
Grassmannian by extending the results of \cite{jacobsson2025approximating} to
the infinite-dimensional setting. Finally, we bound the local approximation
error of the interpolated space in terms of the Grassmannian interpolation
error.
\subsection{Sparse grid interpolation}
\label{sec:sparse_grid_interpolation}

Let $V$ be a Banach space. We describe the construction of an anisotropic sparse
grid interpolation operator for $V$-valued functions. We begin by recalling
interpolation based on Lagrange polynomials in one dimension. Let $\{z_k\}_{k
\geq 0} \subset [-1,1]$ be a sequence of distinct points for Lagrange
interpolation on $[-1,1]$. For any univariate function $g \in C^0([-1,1];V)$, we
introduce a sequence of univariate Lagrange interpolation operators $I_k:
C^0([-1,1];V) \to \mathcal{P}_k([-1,1];V)$, such that
\begin{equation}
    \label{eq:univariate_interpolation}
    I_k(g)(z) := \sum_{j=0}^{k} g(z_j) \ell_j^k(z),
\end{equation}
where $\ell_j^k(z) = \prod\limits_{i=0,i\neq j}^{k}\frac{(z-z_i)}{(z_j-z_i)}$
are Lagrange polynomials of degree $k$ associated with $\{z_0,\ldots, z_k\}$,
and
\[
    \mathcal{P}_k([-1,1];V) := \Big\{g \in C^0([-1,1];V) : g(z) = \sum_{j=0}^{k} g_j \ell_j^k(z) \text{ with } g_0,\ldots,g_k \in V \Big\}.
\]

We now establish an estimate for the interpolation error for a function
$g:[-1,1] \to V$ which admits an analytic extension in the complex plane,
recalling a result from \cite[Theorem
8.2]{Trefethen2013}.
\begin{lemma}
    \label{lemma:univariate_convergence_rate}
    Given a function $g \in C^0([-1,1];V)$ that admits an analytic extension in
    the region 
    $\Sigma([-1,1];\tau):=\{z \in \C: \text{dist}(z,[-1,1])\leq \tau
    \}$ for some $\tau > 0$, then its Chebyshev interpolants satisfy
    \begin{equation}
        \label{eq:1d_cheby_inter_error}
        \norm{g - I_k(g)}{C^0([-1,1];V)} \leq \frac{4}{\rho - 1} e^{-k \log(\rho)} \max_{z \in \Sigma([-1,1];\tau)}\norm{g(z)}{V},
    \end{equation}
    where 
    \[
        1 < \rho := \tau + \sqrt{1 + \tau^2}.
    \]
\end{lemma}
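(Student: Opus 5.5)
The plan is to follow the classical Chebyshev-coefficient argument of \cite[Theorem 8.2]{Trefethen2013} and transfer it to $V$-valued functions, using that each step is linear in $g$ and only requires norms and Cauchy-type integrals. Here $I_k$ is taken with the Chebyshev points $z_j$ (extrema or roots) of degree $k$. First, note that $\Sigma([-1,1];\tau)$ contains the closed Bernstein ellipse $E_\rho$ with foci $\pm1$ and parameter $\rho=\tau+\sqrt{1+\tau^2}$. Indeed, the ellipse $E_\rho$ has semi-minor axis $(\rho-\rho^{-1})/2=\tau$, and its points lie within distance $\tau$ of $[-1,1]$: the semi-major axis is $(\rho+\rho^{-1})/2=\sqrt{1+\tau^2}$, and every point of $E_\rho$ is within $\tau$ of the segment. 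This geometric check is the step I expect to need the most care; if the inclusion fails near the vertices one would instead take the largest Bernstein ellipse inside $\Sigma$, but the stated $\rho$ suggests the minor-axis matching suffices. Set $M:=\max_{z\in\Sigma}\|g(z)\|_V$, which bounds $\|g\|_V$ on $E_\rho$.

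Second, expand $g$ in a $V$-valued Chebyshev series $g=\sum_{m\ge0}a_mT_m$, with
\[
a_m=\frac{2}{\pi}\int_{-1}^1 \frac{g(x)T_m(x)}{\sqrt{1-x^2}}\,dx \in V
\]
as Bochner integrals (halved for $m=0$). Substituting $x=(w+w^{-1})/2$ turns $a_m$ into a contour integral $\frac{1}{\pi i}\oint_{|w|=1} g\bigl((w+w^{-1})/2\bigr)w^{-m-1}\,dw$ of a $V$-valued holomorphic function in the annulus $\rho^{-1}<|w|<\rho$. Cauchy's theorem for Banach-valued holomorphic functions lets us deform the contour to $|w|=\rho$, giving $\|a_m\|_V\le 2M\rho^{-m}$. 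Alternatively, one can apply scalar functionals $\varphi\in V^*$ and use Hahn--Banach. The series then converges absolutely in $C^0([-1,1];V)$.

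Third, use aliasing. On the Chebyshev grid, $I_k T_m=\pm T_{m'}$ for some $m'\le k$, so $I_k g=\sum_m a_m I_kT_m$, and linearity plus continuity of $I_k$ on the absolutely convergent series justify the interchange. Hence
\[
g-I_kg=\sum_{m>k}a_m\,(T_m-I_kT_m),
\qquad
\|T_m-I_kT_m\|_\infty\le 2,
\]
and therefore
\[
\|g-I_kg\|_{C^0}\le \sum_{m>k}4M\rho^{-m}=\frac{4M\rho^{-k}}{\rho-1},
\]
which is the claimed bound since $\rho^{-k}=e^{-k\log\rho}$.
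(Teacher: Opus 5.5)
Your proposal is correct and reproduces the standard argument behind \cite[Theorem 8.2]{Trefethen2013}, which the paper cites without giving a proof of its own: the coefficient bound $\|a_m\|_V\le 2M\rho^{-m}$ via the Joukowski substitution, followed by aliasing. Your additions, namely Bochner integrals or scalarization for $V$-valued $g$ and the inclusion $E_\rho\subset\Sigma([-1,1];\tau)$, are exactly what is needed to transfer it to the stated setting, and the inclusion you were unsure about does hold: on $E_\rho$ one has $y^2=\tau^2\bigl(1-x^2/(1+\tau^2)\bigr)$, so for $1<|x|\le\sqrt{1+\tau^2}$ the condition $(|x|-1)^2+y^2\le\tau^2$ reduces to $x^2/(1+\tau^2)-2|x|+1\le 0$, a convex quadratic in $|x|$ that is negative at both endpoints, while the points with $|x|\le 1$ satisfy $|y|\le\tau$ and the interior of the ellipse is covered by convexity of $\Sigma$.
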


We now extend these univariate interpolation operators and the corresponding
error estimates to the multidimensional setting. To this end, and using the
convention $I_{-1} = 0$, we introduce the univariate difference operators
\begin{equation}
    \label{eq:univariate_difference_operator}
    \Delta_k := I_k - I_{k-1}, \text{ for } k\geq 0.
\end{equation}
With these notations, for any $k\geq 0$, we write
\begin{equation}
    \label{eq:interpolation_in_difference_form}
    I_k = \sum_{j=0}^{k}\Delta_j.
\end{equation}
Next, for any multi-index $\nu \in \mathbb{N}^\dglob$, we associate the
multivariate point 
\[
    \para_\nu = (z_{\nu_1}^1, \ldots, z_{\nu_\dglob}^\dglob) \in [-1,1]^\dglob,
\]
and the tensorised multivariate operators
\begin{equation}
    \label{eq:multivariate_operators}
    \cI_\nu := \bigotimes_{j=1}^{\dglob} I_{\nu_j}, \quad \Delta_\nu := \bigotimes_{j=1}^{\dglob} \Delta_{\nu_j}.
\end{equation}
Similarly, the multivariate interpolation operators can be written as the sums
of difference operators, namely, for every $\nu \in \mathbb{N}^{\dglob}$
\begin{equation}
    \label{eq:multi_interpolation_in_difference_form}
    \cI_\nu = \bigotimes_{j=1}^{\dglob} \Big(\sum_{k=0}^{\nu_j} \Delta_k\Big) = \sum_{\mu \leq \nu}\Delta_\mu.
\end{equation}
However, $\cI_\nu$ is the interpolation operator associated with the full tensor
product grid, which involves $\prod_{j=1}^{\dglob}(\nu_j+1)$ degrees of freedom.
When the number of variables $\dglob$ is moderately large, the computation of
$\cI_\nu$ becomes prohibitively expensive due to the curse of dimensionality.
This motivates the use of anisotropic sparse grids, with the aim of
substantially reducing the number of required function evaluations while
preserving the same approximation error, as shown in
\cite{nobileAnisotropicSparseGrid2008}.

We now introduce the anisotropic Smolyak formulas $\cI_{w,\balpha}$,
parameterized by the level $w \in \mathbb{N}$ and the weight vector $\balpha =
(\alpha_1,\ldots,\alpha_{\dglob}) \in \mathbb{R}^{\dglob}$ with $\balpha > 0$.
Moreover, we denote by $\underline{\alpha} := \min_{1\leq i \leq
  \dglob} \alpha_i$ and
by $|\balpha| := \sum\nolimits_{1\leq i \leq \dglob} \alpha_i$, and define the set
\begin{equation}
    \label{eq:anisotropic_set}
    \Lambda_{w,\balpha} := \Big\{ \bw \in \mathbb{N}^{\dglob}: \sum_{i=1}^{\dglob}\bw_i \alpha_i \leq w \underline{\alpha} \Big\}.
\end{equation}
The anisotropic Smolyak formula $\cI_{w,\balpha}$ is then given by
\begin{equation}
    \label{eq:anisotropic_Smolyak_formula}
    \cI_{w,\balpha} = \sum_{\bw \in \Lambda_{w,\balpha}} \Delta_\bw.
\end{equation}
In addition, to compute $\cI_{w,\balpha}(\cdot)$, it suffices to evaluate the
function on the anisotropic sparse grid
\begin{equation}
    \label{eq:anisotropic_sparse_grid}
    \mathcal{S}_{w,\balpha} := \bigcup_{\bw \in \Lambda_{w,\balpha} \backslash \Lambda_{w-|\balpha|/\underline{\alpha}, \balpha}} \Big( \{z_1^1,\ldots,z_{k(\bw_1)}^1\} \times \cdots \times \{z_1^{\dglob},\ldots,z_{k(\bw_{\dglob})}^{\dglob}\} \Big),
\end{equation}
where $k(\bw_i)$ denotes the number of grid nodes in direction $i$ dependent on
$\bw_i$.

The Smolyak formula yields an interpolation operator whenever a nested sequence
of univariate interpolation nodes is used. In what follows, we focus on the
Clenshaw-Curtis nodes in the construction of $\cI_{w,\balpha}$. For any $k_i\in
\mathbb{N}$ with $k_i>1$, these nodes in direction $i$ are given by
\begin{equation}
    \label{eq:Clenshaw-Curtis_nodes}
    z_j^i = -\cos \Big(\frac{\pi(j-1)}{k_i-1}\Big), \quad j = 1,\ldots,k_i.
\end{equation}
Moreover, for a given set $\Lambda_{w,\balpha}$ and for each $\bw \in
\Lambda_{w,\balpha}$, we let the number of nodes in direction $i$ be
\begin{equation}
    k_i = k(\bw_i) = 2^{\bw_i} + 1, \quad \bw_i \geq 1,
\end{equation}
with the convention that $k(0) = 1$ and $z_1^i =0$ when $\bw_i = 0$. With this
choice, we obtain a nested sequence of nodes, and consequently
$\mathcal{S}_{w,\balpha} \subset \mathcal{S}_{w+1,\balpha}$.

The key idea of anisotropic sparse interpolation is to use more nodes in
directions with slower convergence. This property of
$\cI_{w,\balpha}$ is encoded in the choice of the weight vector
$\balpha$. It is essential here, since the
parameter dependence of  the diffusion coefficient $A\left(x, \para\right)$ in
\eqref{eq:parametric_pde} is strongly anisotropic; some of the parameters $z_j$
influence the solution $\u(x,\para)$ more than others. 
Taking all components of the vector $\balpha$ to be the same yields the
isotropic Smolyak formula, which treats all variables equally. 

Hereafter, we assume that $g \in C^0([-1,1]^{\dglob};V)$ admits an analytic
extension to the complex region 
\begin{equation}
    \label{eq:complex_region}
    E(\boldsymbol{\tau})
    := \{\mathbf z \in \mathbb{C}^{\dglob} :
    \operatorname{dist}(z_i,[-1,1]) \le \tau_i,\; i=1,\dots,\dglob\},
\end{equation}
where $\tau_i > 0$ for all $i=1,\ldots,\dglob$. According to \cite[Remark
3.12]{nobileAnisotropicSparseGrid2008}, the choice
\begin{equation}
    \label{eq:weights}
    \alpha_i = \frac{1}{2} \log(\tau_i + \sqrt{1+\tau_i^2})
\end{equation}
is optimal with respect to the error estimate established in
\cite[Theorem~3.8]{nobileAnisotropicSparseGrid2008}. The result is recalled in
the following lemma.
\begin{lemma}
    \label{lemma:multivariate_convergence_rate}
    Let $g \in C^0([-1,1]^{\dglob};V)$ admit an analytic
    extension to the complex region \eqref{eq:complex_region} with $\tau_i >0$ for
    all $i=1,\ldots,\dglob$, and consider the anisotropic sparse interpolator
    $\cI_{w,\balpha}$ based on Clenshaw-Curtis nodes
    \eqref{eq:Clenshaw-Curtis_nodes} with weights $\balpha$ as defined in
    \eqref{eq:weights}. Let $\eta = \eta(w,\dglob) := \#
    \mathcal{S}_{w,\balpha}$ be the number of
    interpolation nodes. Then, there exists a constant $C_{\balpha}$
    independent of $\eta$ such that
    \begin{equation}
      \label{eq:multivariate_rate}
      \left\| g - \cI_{w,\balpha}(g)
      \right\|_{L^\infty([-1,1]^{\dglob};V)} \leq 
      \left\{
        \begin{array}{ll}
          C_{\balpha} \eta^{\rho (\frac12 - \log(2)\mathrm{e})},
          & \text{if} \ \ 0 \leq w \leq \frac{|\balpha|}{\underline{\alpha} \log(2)},\\[1.5ex]
         C_{\balpha} \eta^{\frac{\rho}2} \exp\Big( -|\balpha| \,
          \eta^{\frac{\log(2)}{|\balpha|}\rho} \Big), & \text{if} \
                                                           \ w >
                                                           \frac{|\balpha|}{\underline{\alpha}
                                                           \log(2)},
            \end{array}\right.                                               
    \end{equation}
    where
    \[
     \rho :=
    \frac{\underline{\alpha}}{\log(2) +
      \sum_{i=1}^{\dglob}\underline{\alpha}/\alpha_i} \, .
    \]   
\end{lemma}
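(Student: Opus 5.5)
This result is essentially \cite[Theorem~3.8]{nobileAnisotropicSparseGrid2008}, stated there for Hilbert-space-valued functions. The plan is to reproduce its proof for a general Banach space $V$. The proof has two parts: a bound on each hierarchical surplus $\Delta_\bw g$ in terms of the analyticity radii $\tau_i$, and a summation of these bounds over the complement of $\Lambda_{w,\balpha}$ that is then rewritten in terms of the number of nodes $\eta$.

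\textbf{Surplus bounds.} First, \cref{lemma:univariate_convergence_rate} holds verbatim for $V$-valued functions. One can see this either by applying it to $\varphi\circ g$ for every $\varphi\in V^*$ with $\|\varphi\|\le1$ and taking the supremum (Hahn--Banach), or by noting that its Chebyshev-coefficient proof only uses Cauchy estimates, which are valid in Banach spaces. With Clenshaw--Curtis nodes and $k(\bw_i)=2^{\bw_i}+1$, we have $\Delta_{\bw_i}=(I_{k(\bw_i)}-g)-(I_{k(\bw_i-1)}-g)$. This gives the univariate bound $\|\Delta_{\bw_i}\|\lesssim e^{-2\alpha_i 2^{\bw_i-1}}$, up to the Lebesgue constant factor $\log k$ of Clenshaw--Curtis interpolation. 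Here we use $\log\rho_i=2\alpha_i$ by \eqref{eq:weights}. The bound is applied as an operator estimate on functions analytic in $\Sigma([-1,1];\tau_i)$.

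Next, we tensorise, applying the univariate bound direction by direction. Analyticity in $E(\boldsymbol\tau)$ is inherited by each partial interpolant in the remaining variables, because interpolation in one variable preserves analyticity in the others. Lebesgue constants control the sup norm on the complex region. This yields
\[\|\Delta_\bw g\|_{L^\infty}\le C\,\max_{E(\boldsymbol\tau)}\|g\|_V\prod_{i}\,(\text{log factor})\,e^{-\alpha_i 2^{\bw_i}}.\]

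\textbf{Summation.} The interpolation error satisfies $\|g-\cI_{w,\balpha}g\|\le\sum_{\bw\notin\Lambda_{w,\balpha}}\|\Delta_\bw g\|$. The multi-indices outside $\Lambda_{w,\balpha}$ are those with $\sum_i\alpha_i\bw_i> w\underline\alpha$. To estimate the sum, use convexity of $2^x$ to bound $\sum_i\alpha_i2^{\bw_i}$ from below in terms of $\sum_i\alpha_i\bw_i$. Then compare the sum with an integral over the anisotropic simplex, exactly as in \cite[Lemma~3.4 and Theorem~3.7]{nobileAnisotropicSparseGrid2008}. Two regimes arise. For small $w$, the sum is dominated by a term of order $e^{-w\underline\alpha\, \mathrm e\log 2}$ times polynomial factors. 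For $w>|\balpha|/(\underline\alpha\log2)$, it is dominated by $\exp(-|\balpha|\,2^{w\underline\alpha/|\balpha|})$.

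\textbf{Conversion to $\eta$.} Finally, we count nodes. From \eqref{eq:anisotropic_sparse_grid}, $\eta\le\sum_{\bw\in\Lambda_{w,\balpha}}\prod_i 2^{\bw_i}$. This is bounded by $C\,2^{w}\prod_i(1+\underline\alpha/\alpha_i)\ldots$, which gives
\[w\ge \frac{\log\eta}{\log2+\sum_i\underline\alpha/\alpha_i}\cdot\frac1{\underline\alpha}\quad\text{(up to constants)}.\]
Substituting this into the two regimes produces the exponents $\rho(\tfrac12-\log(2)\mathrm e)$ and $\exp(-|\balpha|\eta^{\rho\log2/|\balpha|})$, with all remaining constants absorbed into $C_{\balpha}$.

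\textbf{Main obstacle.} I expect the combinatorial summation over the complement of $\Lambda_{w,\balpha}$, together with the node counting, to be the main difficulty. To handle it I would follow \cite{nobileAnisotropicSparseGrid2008} line by line, since nothing in that step depends on $V$ being a Hilbert space. The only genuinely new point is the Banach-space extension of the univariate estimate, which the duality argument settles.
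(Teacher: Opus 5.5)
Your plan matches the paper, which gives no proof of its own and simply recalls \cite[Theorem~3.8]{nobileAnisotropicSparseGrid2008}; your sketch of that argument (surplus bounds, summation over the complement of $\Lambda_{w,\balpha}$, node counting) and the duality extension to Banach-valued $g$ are fine. One slip in the last step: the count is $\eta\le 2^{w}\prod_i(1+w\underline{\alpha}/\alpha_i)\le e^{w(\log 2+\sum_i\underline{\alpha}/\alpha_i)}$, i.e.\ $w\underline{\alpha}\ge\rho\log\eta$, so your display should not carry the extra factor $1/\underline{\alpha}$ (taken literally it would give $\rho/\underline{\alpha}$ instead of $\rho$ in the exponents).
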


The algebraic estimate applies in the preasymptotic regime, while the
subexponential estimate requires the refinement level $w$ to exceed the
threshold stated in the lemma. This threshold is at least $\dglob/\log(2)$, so
reaching the subexponential regime can require very fine grids when many
parameters are present. Consequently, the algebraic estimate is often more
relevant in practical computations
\cite[Remark~3.10]{nobileAnisotropicSparseGrid2008}. For the local interpolation
problems considered below, this condition involves the number of active local
parameters $\dloc$ rather than the global dimension.

\subsection{Grassmannian interpolation error}
Our goal is to approximate the mapping $\specProjR$ via interpolation on the
Grassmann manifold. In the finite-dimensional setting for Riemannian manifolds,
it is shown in \cite{jacobsson2025approximating}, that this can be done by
interpolating the composition of the Riemannian logarithm and $\specProjR$.
First, we state some properties of the Grassmannian manifold. 
Let $M = \GrR_{\nloc}(V)$ with $V$ a real Hilbert space.
We will now express the Grassmannian distance in terms of the principal angles between
subspaces. This will be useful for bounding the local approximation error of the spaces $\AharmEx_{\para}(\specProjRApprox(\para))$ in the GFEM.
Let $V_1, V_2 \in \GrR_k(V)$
and $\theta_1, \ldots, \theta_k$ be the principal angles between $V_1$ and $V_2$
which are defined by
\begin{equation*}
    \cos(\theta_i) = \sigma_i, \quad i=1, \ldots, k,
\end{equation*}
where $\sigma_1 \geq \cdots \geq \sigma_k$ are the singular values of
the matrix with entries $( v_{1,i}, v_{2,j})_V$ for $i,j=1, \ldots, k$
where $\{v_{1,1}, \ldots, v_{1,k}\}$ and $\{v_{2,1}, \ldots, v_{2,k}\}$ are orthonormal
bases of $V_1$ and $V_2$, respectively.
\begin{lemma}
    \label{lem:grassmannian_distance_principal_angles}
    Let $V$ be a Hilbert space and $V_1, V_2 \in M = \GrR_k(V)$. Let $\theta_1, \ldots, \theta_k$ 
    be the principal angles between $V_1$ and $V_2$.
    Then, we have
    \begin{equation}
        \label{eq:grassmannian_distance_principal_angles}
        d_{M}(V_1, V_2) = \sqrt{\theta_1^2 + \cdots + \theta_k^2}.
    \end{equation}
\end{lemma}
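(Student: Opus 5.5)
The plan is to reduce the infinite-dimensional statement to the classical finite-dimensional one. Both $V_1$ and $V_2$ lie in the finite-dimensional subspace $W := V_1 + V_2$, with $\dim W \le 2k$. First I will show that $\GrR_k(W)$ embeds as a totally geodesic Riemannian submanifold of $\GrR_k(V)$. Then I will compute the geodesic distance inside $\GrR_k(W)$ with the finite-dimensional formula of Edelman--Arias--Smith (equivalently, Wong's formula), which gives exactly $\sqrt{\theta_1^2+\cdots+\theta_k^2}$. This reduction needs two facts: an upper bound, which comes from an explicit curve, and a lower bound, which says no curve leaving $W$ is shorter.

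\textbf{Upper bound via an explicit curve.} Choose principal bases in the usual way. Take the SVD of the matrix $\bigl((v_{1,i},v_{2,j})_V\bigr)_{i,j}$ and rotate the orthonormal bases so that $(u_i,w_j)_V=\delta_{ij}\cos\theta_i$, with $u_i\in V_1$ and $w_j\in V_2$. For $\theta_i>0$, write $w_i=\cos\theta_i\,u_i+\sin\theta_i\,q_i$. The vectors $q_i$ are orthonormal and orthogonal to $V_1$; this orthogonality follows from the diagonal structure. Define
\[
L(t)u^{(i)} := \cos(t\theta_i)u_i+\sin(t\theta_i)q_i, \qquad t\in[0,1],
\]
where $u^{(i)}$ is the $i$-th standard basis vector of $\R^k$. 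This $L$ is a curve in $\mathrm{St}_k(V)$. Its velocity $L'(t)$ is horizontal, meaning $L(t)^*L'(t)=0$, so its speed is the same in the quotient metric. That speed is $\|L'(t)\|_{\mathrm{tr}}=\sqrt{\sum_i\theta_i^2}$. Hence the projected curve $\pi\circ L$ joins $V_1$ to $V_2$ with length $\sqrt{\sum\theta_i^2}$, and $d_M(V_1,V_2)\le\sqrt{\sum\theta_i^2}$.

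\textbf{Lower bound via a retraction onto $\GrR_k(W)$.} This is the main obstacle. In infinite dimensions one cannot just quote Hopf--Rinow together with the finite-dimensional classification of geodesics. My plan is to build a $1$-Lipschitz retraction onto the finite-dimensional Grassmannian. Let $W'\supseteq W$ be any finite-dimensional subspace. For a piecewise $C^1$ curve $\gamma$ from $V_1$ to $V_2$ with length $\ell$, pick a horizontal lift $L(t)$ in $\mathrm{St}_k(V)$. Then approximate: the image of $L$ is compact, so there is a finite-dimensional $W'\supseteq W$ with $\|(I-\orthProj{W'})L\|$ and $\|(I-\orthProj{W'})L'\|$ uniformly small. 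Set $\tilde L := \orthProj{W'}L\,(L^*\orthProj{W'}L)^{-1/2}$, the polar orthonormalization. This is a curve in $\mathrm{St}_k(W')$ whose endpoints still span $V_1$ and $V_2$, because $V_1,V_2\subset W'$. Its length is at most $\ell+\varepsilon$, since projection reduces the norm of the velocity and the orthonormalization is a perturbation close to the identity.

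The lower bound then follows. The finite-dimensional result gives $d_{\GrR_k(W')}(V_1,V_2)=\sqrt{\sum\theta_i^2}$; the principal angles do not depend on the ambient space. Therefore $\ell+\varepsilon\ge\sqrt{\sum\theta_i^2}$, and letting $\varepsilon\to0$ and taking the infimum over $\gamma$ yields $d_M(V_1,V_2)\ge\sqrt{\sum\theta_i^2}$.

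The delicate point is controlling the length of the orthonormalized curve. I would handle it by noting that the map $X\mapsto X(X^*X)^{-1/2}$ is smooth near $\mathrm{St}_k$ and its derivative at $\mathrm{St}_k$ is a contraction onto the tangent space. An alternative is a compactness argument via quotient lengths.
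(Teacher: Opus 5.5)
Your proposal is correct, but it handles the infinite-dimensional case by a different route than the paper.

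\textbf{The paper's route.} It proves the formula for $\R^n$ by citing Absil et al. It transfers the formula to an arbitrary finite-dimensional inner product space through a linear isometry, which induces an isometry of Grassmannians and preserves principal angles. It then reduces infinite-dimensional $V$ to this case by quoting Theorem~6 of Harms--Mennucci: there is a $2k$-dimensional $U$ with $V_1,V_2\in\GrR_k(U)$ and $d_{\GrR_k(V)}(V_1,V_2)=d_{\GrR_k(U)}(V_1,V_2)$.

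\textbf{Your route.} You prove that reduction by hand. The upper bound comes from an explicit horizontal curve whose length you compute directly. This curve is a minimizing geodesic, and this half needs no finite-dimensional input at all. The lower bound comes from pushing an arbitrary competitor curve into some $\GrR_k(W')$ with $\dim W'<\infty$, at the cost of a factor $1+C\varepsilon$ in length. The paper's argument is much shorter. Yours is self-contained and does not import the genuinely infinite-dimensional facts (existence of minimizers, equality of ambient and sub-Grassmannian distances) as a black box.

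\textbf{Points to tighten.}
\begin{enumerate}
\item The announced totally geodesic embedding is never used and can be dropped.
\item What you construct is not a global $1$-Lipschitz retraction. It is a curve-by-curve approximation, with $W'$ depending on the curve and on $\varepsilon$.
\item The length estimate is cleanest via $\widetilde L'=D\Phi(\orthProj{W'}L)[\orthProj{W'}L']$ with $\Phi(X)=X(X^*X)^{-1/2}$. For $L\in\mathrm{St}_k(V)$, the derivative $D\Phi(L)[H]=H-\tfrac12L(L^*H+H^*L)$ is the orthogonal projection onto $T_L\mathrm{St}_k(V)$ in the trace inner product. Hence $\|D\Phi(X)\|\le 1+C\|X-L\|$ for $X$ near $L$. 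So only $\sup_t\|(I-\orthProj{W'})L(t)\|$ needs to be small, not the velocity $L'$.
\item State that horizontal lifts exist. From any piecewise $C^1$ lift $L_0$, solve $Q'=-(L_0^*L_0')Q$ in $\mathcal{O}(k)$ and set $L=L_0Q$.
\item Make explicit that the finite-dimensional formula on $W'$ with the inherited inner product follows from the $\R^n$ case by choosing an orthonormal basis. This is exactly the isometry step the paper spells out.
\end{enumerate}
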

\begin{proof}
    For $\R^n$ with the standard Euclidean inner product, 
    this is shown in \cite[p.211]{absil2004riemannian}. Now let $V$ be an
    $n$-dimensional Hilbert space with a general inner product $(\cdot, \cdot)_V$ and 
    $V_1, V_2 \in \GrR_k(V)$.
    There exists a linear isometry $L:(V, (\cdot, \cdot)_V) \to (\R^n, (\cdot, \cdot)_{2})$.
    Then $L$ induces an isometry $\widetilde{L}: \GrR_k(V) 
    \to \GrR_k(\R^n)$ by
    \cite[Lemma 2.3]{biliotti2014properly}. In particular, we obtain 
    \begin{equation}
        \label{eq:dist_grass_general_inner_product}
        d_{\GrR_k(V)}(V_1, V_2) 
        = d_{\GrR_k(\R^n)}(\widetilde{L}(V_1), \widetilde{L}(V_2)) 
        = \sqrt{\sum_{i=1}^k \theta_i^2(\widetilde{L}(V_1), \widetilde{L}(V_2))},
    \end{equation}
    where  $\theta_{i}(\widetilde{L}(V_1), \widetilde{L}(V_2))$ are the principal angles
    between $\widetilde{L}(V_1)$ and $\widetilde{L}(V_2)$ with respect to the
    standard Euclidean inner product.  The first of the equalities is a
    consequence of $\widetilde{L}$ being an isometry and the second equality 
    is the result in the Euclidean setting. 

    We show that the principal angles are invariant under isometries. For the
    spaces $V_1$ and $V_2$ we choose orthonormal bases $\{v_{1,1}, \ldots,
    v_{1,k}\}$ and $\{v_{2,1}, \ldots, v_{2,k}\}$. Then, $\{L(v_{1,1}), \ldots,
    L(v_{1,k})\}$ and $\{L(v_{2,1}), \ldots, L(v_{2,k})\}$ are orthonormal bases
    of $\widetilde{L}(V_1)$ and $\widetilde{L}(V_2)$, respectively, because $L$
    is an isometry. Thus, the matrices with entries $(v_{1,i}, v_{2,j})_V$ and
    $(L(v_{1,i}), L(v_{2,j}))_{\R^n}$ are the same, which implies that the
    principal angles $\theta_i(V_1, V_2)$ and $\theta_i(\widetilde{L}(V_1),
    \widetilde{L}(V_2))$ are the same. Together
    with~\eqref{eq:dist_grass_general_inner_product} this shows the result for
    $V$ finite-dimensional. The infinite-dimensional case now follows from the
    finite-dimensional case using \cite[Theorem 6]{harms2012geodesics}. Let $V$
    be the infinite-dimensional Hilbert space and $V_1, V_2 \in \GrR_k(V)$. By
    \cite[Theorem 6 (5)]{harms2012geodesics} there exists a $2k$-dimensional
    subspace $U \subset V$ such that $V_1$ and $V_2$ are contained in $\GrR_k(U)
    \subset \GrR_k(V)$. 
    
    Note that the principal angles between $V_1$ and $V_2$
    in $\GrR_k(U)$ are the same as the principal angles between $V_1$ and
    $V_2$ in $\GrR_k(V)$. By \cite[Theorem 6(2)]{harms2012geodesics} and
    since we have already shown
    the result for the finite-dimensional case, 
    \begin{equation*}
        d_{\GrR_k(V)}(V_1, V_2) 
        = d_{\GrR_k(U)}(V_1, V_2) 
        = \sqrt{\sum_{i=1}^k \theta_{i}^2(V_1, V_2)}. \qedhere
    \end{equation*}
\end{proof}
Let us fix $p \in M$. We denote the tangent space of $M$ at $p$ as $T_p M$. For
each $v \in T_p M$ let $\gamma_v$ be the unique geodesic with $\gamma_v(0) = p$ and
$\dot{\gamma}_v(0) = v$. As $\GrR_{\nloc}(V)$ is geodesically complete
\cite[Proposition~2.4]{biliotti2014properly}, the Riemannian exponential
$\exp_p \colon T_p M \to M$, $\exp_p(v) := \gamma_v(1)$, is defined on all of
$T_p M$. A neighborhood $U \subset M$ of $p$ is called a normal neighborhood of $p$
if it is the diffeomorphic
image under $\exp_p$ of a star-shaped neighborhood $S \subset T_p M$ of 
$0\in T_p M$.
The Riemannian logarithm $\log_p : U \to S$ is the inverse of $\exp_p$ on $U$.
\begin{proposition}
    \label{prop:properties_grassmannian}
    Let $V$ be a real Hilbert space and $\GrR_{k}(V)$ denote the Grassmann manifold.
    Let $p \in \GrR_{k}(V)$. The Riemannian exponential of $\GrR_{k}(V)$ is real 
    analytic on $T_p \GrR_{k}(V)$. Let $U$ be a normal neighborhood of $p$ and 
    $S$ the corresponding star-shaped neighborhood of $0 \in T_p \GrR_{k}(V)$. Then the Riemannian
    logarithm $\log_p : U \to S$ is real analytic.
\end{proposition}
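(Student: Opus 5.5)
We would prove the statement by writing the exponential explicitly through the Stiefel manifold and then using the real analytic inverse function theorem on Banach spaces. Fix $L_p\in\mathrm{St}_k(V)$ with $\pi(L_p)=p$. The horizontal space at $L_p$ is $\mathcal{H}_{L_p}=\{X\in\linop{\R^k}{V}: L_p^*X=0\}$. Since $\pi$ is a Riemannian submersion, $d\pi_{L_p}$ restricts to a linear isometry $\mathcal{H}_{L_p}\cong T_pM$. Horizontal geodesics of $\mathrm{St}_k(V)$ project to geodesics of $\GrR_k(V)$, so it suffices to compute the horizontal geodesic through $L_p$ with initial velocity $X$. Following the finite-dimensional formula (Edelman--Arias--Smith, Absil et al.), which carries over to the Hilbert setting by \cite[Theorem 6]{harms2012geodesics} since everything happens inside the finite-dimensional subspace $\mathrm{range}(L_p)+\mathrm{range}(X)$, we obtain
\begin{equation*}
\exp_p(d\pi_{L_p}X)=\pi\bigl(L_p\cos(\sqrt{X^*X})+X\,\mathrm{sinc}(\sqrt{X^*X})\bigr).
\end{equation*}
Here $\cos(\sqrt{S})$ and $\mathrm{sinc}(\sqrt{S})$ are entire power series in the symmetric $k\times k$ matrix $S=X^*X$. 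The map $X\mapsto X^*X$ is a continuous quadratic polynomial $\mathcal{H}_{L_p}\to\R^{k\times k}$. Hence the map $X\mapsto L_p\cos(\sqrt{X^*X})+X\,\mathrm{sinc}(\sqrt{X^*X})$ is real analytic from the Hilbert space $\mathcal{H}_{L_p}$ into $\linop{\R^k}{V}$, with values in $\mathrm{St}_k(V)$. By \cref{remark:analytic_manifolds}, $\mathrm{St}_k(V)$ is an analytic submanifold and $\pi$ is an analytic submersion. Composing with $\pi$ and the isometry $d\pi_{L_p}^{-1}$ therefore shows that $\exp_p$ is real analytic on all of $T_pM$.

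For the logarithm we would use the real analytic inverse function theorem in Banach spaces (e.g.\ \cite{zeidler1986_nonlinear}). Let $v\in S$ and $q=\exp_p(v)\in U$. By the definition of a normal neighborhood, $\exp_p|_S:S\to U$ is a diffeomorphism, so $d(\exp_p)_v$ is a bounded linear bijection $T_pM\to T_qM$ with bounded inverse by the open mapping theorem. Work in analytic charts of $\GrR_k(V)$ around $q$, for instance the standard graph chart $\linop{q}{q^\perp}\to\GrR_k(V)$, which is analytic by \cref{remark:analytic_manifolds}. The analytic inverse function theorem then gives an analytic local inverse of $\exp_p$ near $q$. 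This local inverse coincides with $\log_p$ on a neighborhood of $q$ because $\log_p$ is the (continuous) inverse of $\exp_p|_S$. Since analyticity is a local property and $q\in U$ was arbitrary, $\log_p$ is real analytic on $U$.

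The main obstacle is the first step: rigorously justifying the closed-form geodesic in infinite dimensions and the analyticity of the resulting map. Our plan is to avoid analysing the infinite-dimensional geodesic equation directly. The curve $t\mapsto L_p\cos(t\sqrt{X^*X})+tX\,\mathrm{sinc}(t\sqrt{X^*X})$ stays in $\mathrm{St}_k(W)$ with $W=\mathrm{range}(L_p)+\mathrm{range}(X)$ of dimension at most $2k$. By \cite[Theorem 6]{harms2012geodesics} and the finite-dimensional result, its projection is a geodesic of $\GrR_k(V)$ with the correct initial data, and uniqueness of geodesics identifies it with $\gamma_v$. Analyticity in $X$ then follows from the power series of the entire even functions $\cos(\sqrt{s})$ and $\mathrm{sinc}(\sqrt{s})$, composed with the bounded quadratic map $X\mapsto X^*X$. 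We would also check that the graph charts are analytic-compatible with the quotient structure, which we take from \cref{remark:analytic_manifolds}.
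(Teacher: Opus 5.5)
Your proposal is correct, but it takes a genuinely different route from the paper. The paper never writes down the geodesics. In an analytic chart it uses Lang's local representation of the geodesic spray and shows that its second component $f_2$ is real analytic. This requires analyticity of the metric $g$ in charts, of $dg$, of a bounded trilinear map, and of $x\mapsto g(x)^{-1}$ via \cite[Lemma 1.2.25]{zech2018sparse}. It then invokes an analytic version of the smooth dependence of flows on initial data \cite[Theorem 4.D]{zeidler1986_nonlinear}, together with geodesic completeness, to conclude that $\exp_p$ (the time-one flow composed with the bundle projection) is analytic. The logarithm is then handled, as in your proposal, by the analytic inverse function theorem.

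You instead exploit the specific structure of the Grassmannian. The horizontal-lift formula $L_p\cos(\sqrt{X^*X})+X\,\mathrm{sinc}(\sqrt{X^*X})$ makes analyticity essentially immediate: entire power series in a $k\times k$ matrix, composed with a bounded quadratic map, then a bounded bilinear product. It gives $\exp_p$ globally on $T_pM$ without ODE theory. It uses from \cref{remark:analytic_manifolds} only the analytic structure of $\mathrm{St}_k(V)$, $\pi$ and the charts, and not the analyticity of the metric in charts.

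The price is that you must justify the closed-form geodesic in infinite dimensions. Your reduction to $\GrR_k(W)$ with $\dim W\leq 2k$ works, but the fact you actually need is that $\GrR_k(W)$ is totally geodesic in $\GrR_k(V)$. This is \cite[Lemma 2.3]{biliotti2014properly}, as used in the proof of \cref{prop:exp_diffeo_and_radial_geodesics_minimal}; a distance statement alone does not give it. Alternatively, you can verify the geodesic directly in $V$. Write $Y(t)$ for your curve and $A=X^*X$, and use $L_p^*X=0$ and the fact that all matrix functions of $A$ commute. Then $Y^*Y=I$, $Y^*\dot Y=0$ and $\ddot Y=-YA$, so the acceleration lies in the normal space $\{YS: S=S^*\}$ of $\mathrm{St}_k(V)\subset\linop{\R^k}{V}$. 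Hence $Y$ is a horizontal Stiefel geodesic and projects to a Grassmann geodesic by the Riemannian submersion property.

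In short, the paper's argument is more general, applying to any geodesically complete real analytic Riemannian Hilbert manifold. Yours is more elementary and explicit, but tied to the Grassmannian.
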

The proof is given in \cref{proof:properties_grassmannian}.
\begin{proposition}
    \label{prop:exp_diffeo_and_radial_geodesics_minimal}
    Let $V$ be a real Hilbert space and $p\in \GrR_{k}(V)$. 
    The map $\exp_p : B_{\pi/2}(0) \to B_{\pi/2}(p)$ is a 
    diffeomorphism between the open ball $B_{\pi/2}(0) \subset T_p \GrR_{k}(V)$ of radius $\pi/2$ around $0$ 
    and the open ball $B_{\pi/2}(p) \subset \GrR_{k}(V)$ of radius $\pi/2$ around $p$. For all 
    $v\in B_{\pi/2}(0)$, the geodesic $\gamma_v:[0,1] \to \GrR_{k}(V)$ defined by $\gamma_v(t) = \exp_p(tv)$ 
    is the minimal geodesic connecting $p$ and $\exp_p(v)$.
\end{proposition}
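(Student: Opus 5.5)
The plan is to reduce everything to the finite-dimensional Grassmannian $\GrR_k(U)$ with $\dim U = 2k$, where the structure of $\exp_p$ is explicit, and then transfer back using the totally geodesic embedding from \cite[Theorem 6]{harms2012geodesics]}, as in the proof of \cref{lem:grassmannian_distance_principal_angles}.

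\textbf{Step 1: explicit exponential.} Identify $T_p\GrR_k(V)$ with the horizontal space $\{X\in\linop{\R^k}{V}: L^*X=0\}$ at a representative $L\in\mathrm{St}_k(V)$ of $p$. For a tangent vector $v$ take a thin SVD $X = Q\Sigma W^{\top}$, with $Q\in\mathrm{St}_k(V)$ and $Q^*L=0$. Then
\[
\gamma_v(t)=\pi\bigl(LW\cos(t\Sigma)W^\top + Q\sin(t\Sigma)W^\top\bigr).
\]
In finite dimensions this is \cite{absil2004riemannian}. In general it follows because $\mathrm{span}(L,Q)$ has dimension at most $2k$ and $\GrR_k$ of it is totally geodesic.

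\textbf{Step 2: distance to $p$.} Write $\sigma_1,\dots,\sigma_k$ for the singular values of $X$, i.e.\ the diagonal entries of $\Sigma$. If $\|v\|=(\sum_i\sigma_i^2)^{1/2}<\pi/2$, then every $\sigma_i<\pi/2$. The principal angles between $p$ and $\exp_p(tv)$ are therefore exactly $t\sigma_i$, since the cross-Gram matrix is $W\cos(t\Sigma)W^\top$. By \cref{lem:grassmannian_distance_principal_angles},
\[
d(p,\exp_p(tv))=t\|v\| = \mathrm{length}(\gamma_v|_{[0,t]}).
\]
This shows that $\gamma_v$ is minimal and that $\exp_p$ maps $B_{\pi/2}(0)$ into $B_{\pi/2}(p)$.

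\textbf{Step 3: bijectivity.} Let $q\in B_{\pi/2}(p)$. By \cref{lem:grassmannian_distance_principal_angles}, $\sum_i\theta_i^2<\pi^2/4$, so all principal angles satisfy $\theta_i<\pi/2$. Equivalently, $L^*M$ is invertible for any representative $M$ of $q$, so $q$ is the graph of a linear map from $p$ into $p^\perp$. Use principal vectors to write $q$ in the form $LW\cos\Theta W^\top+Q\sin\Theta W^\top$. Then $v=Q\Theta W^\top$ is a preimage of norm $d(p,q)$.

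For uniqueness, suppose $\exp_p(v)=q$ with $\|v\|<\pi/2$. By Step 2 the singular values of $v$ must equal the principal angles $\theta_i$. The principal subspaces determine $v$, even with multiplicities, because within equal angles the map $Q\Theta W^\top$ is basis independent. Concretely, the inverse is
\[
\log_p(q)=Q\,\arctan(\Sigma')W'^\top, \qquad (I-LL^*)M(L^*M)^{-1}=Q\Sigma'W'^\top ,
\]
where $\Sigma'=\tan\Theta$. This is the standard closed formula.

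\textbf{Step 4: smoothness.} The map $M\mapsto (I-LL^*)M(L^*M)^{-1}$ is analytic on the chart $\{L^*M \text{ invertible}\}$, and it is $\mathcal O(k)$-invariant up to right multiplication. Write $G=(I-LL^*)M(L^*M)^{-1}$. Then
\[
\log_p = G\,f(G^*G),\qquad f(s)=\arctan(\sqrt s)/\sqrt s,
\]
and $f$ is analytic near $[0,\infty)$. Hence $\log_p$ is smooth and is a two-sided inverse of the analytic map $\exp_p$ (\cref{prop:properties_grassmannian}). So $\exp_p$ is a diffeomorphism between the balls.

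\textbf{Main obstacle.} I expect the hard part to be Step 3, namely uniqueness and well-definedness of $\log_p$ in the presence of repeated singular values. The formula $G f(G^*G)$, which is a functional calculus and so independent of any choice of SVD, is meant to handle this.
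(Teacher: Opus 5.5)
Your proof is correct, but it takes a different route from the paper. The paper never writes down $\exp_p$ or $\log_p$. For injectivity it takes two preimages $v_1,v_2$ and passes to the totally geodesic $\GrR_k(W)$, where $W$ is spanned by the ranges of $\widetilde p,\widetilde v_1,\widetilde v_2$ (so $\dim W\le 3k$). There it quotes the injectivity radius $\pi/2$ of $\GrR_k(\R^n)$ from \cite{bendokat2024grassmann}. Surjectivity and minimality then follow from the existence of minimal geodesics \cite{harms2012geodesics}, combined with that injectivity.

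You instead compute everything explicitly. You use the SVD formula for geodesics and the principal angles $t\sigma_i$ between $p$ and $\exp_p(tv)$. Via \cref{lem:grassmannian_distance_principal_angles}, these give $d(p,\exp_p(tv))=t\|v\|$, hence minimality and $\exp_p(B_{\pi/2}(0))\subset B_{\pi/2}(p)$ in one stroke. Finally you write down the closed-form inverse $Gf(G^*G)$.

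For injectivity, replace the vague remark about principal subspaces with the identity that actually does the work. If $\exp_p(v)=q$ with lift $Q\Sigma W^\top$, then Step 1 gives $G(q)=(I-LL^*)M(L^*M)^{-1}=Q\tan(\Sigma)W^\top$, independently of the representative $M$. Since all $\sigma_i<\pi/2$, $v=Gf(G^*G)$ is then a function of $q$ alone. Together with Step 2, this also shows that $\gamma_v$ is the \emph{unique} minimal geodesic, as the wording of the statement requires.

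What your route buys is the diffeomorphism part of the statement. The paper's proof establishes bijectivity and minimality but leaves smoothness of the inverse implicit. In infinite dimensions this does not follow from a finite-dimensional injectivity radius alone. Your explicit analytic $\log_p$ settles it directly. It also gives real analyticity of $\log_p$ on the whole ball, which is used in \cref{prop:interpolation_error_grassmann}, without the inverse-function argument of \cref{prop:properties_grassmannian}.

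The paper's route is shorter and avoids SVD bookkeeping. The only such detail on your side is that choosing $Q$ with $Q^*L=0$ for a rank-deficient lift needs $\dim L^\perp\ge k$. This is automatic for infinite-dimensional $V$; otherwise use a compact SVD.
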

\begin{proof}
    To show injectivity, let $v_1, v_2 \in B_{\pi/2}(0)$ be two tangent vectors
    such that $\exp_p(v_1) = \exp_p(v_2)$. Let $\widetilde{p}\in
    \mathrm{St}_k(V)$ be a representative of $p$ and let $\widetilde{v}_1,
    \widetilde{v}_2 \in T_{\widetilde{p}} \mathrm{St}_k(V) \subset
    \linop{\R^k}{V}$ be the horizontal lifts of $v_1$ and $v_2$, respectively.
    We define the subspace 
    $$
        W := \mathrm{span}\{\widetilde{p}(\R^k), \widetilde{v}_1(\R^k),
        \widetilde{v}_2(\R^k)\} \subset V
    $$ and consider the
    inclusion $\iota : W \to V$, which induces isometric immersions $\iota_S :
    \mathrm{St}_k(W) \to \mathrm{St}_k(V)$ and $\iota_G : \GrR_k(W) \to
    \GrR_k(V)$ by \cite[Lemma 2.3]{biliotti2014properly}. We will reduce the 
    problem to the finite-dimensional Grassmannian $\GrR_k(W)$, by providing 
    $p^W \in \GrR_k(W)$ and $v_1^W, v_2^W \in T_{p^W} \GrR_k(W)$ such that
    \begin{equation}
        \label{eq:reduction_to_finite_dimensional_grassmannian}
        d_{p^W}\iota_G(v_i^W) = v_i, 
        \quad \exp_p(t v_i) = \iota_G(\exp^W_{p^W}(t v_i^W)),
        \quad i=1,2.
    \end{equation}
    By construction of $W$, we can regard $\widetilde{p}$, $\widetilde{v}_1$ and
    $\widetilde{v}_2$ as elements of $\linop{\R^k}{W}$, denoted by
    $\widetilde{p}^W$, $\widetilde{v}_1^W$ and $\widetilde{v}_2^W$,
    respectively. Since $\iota$ is isometric the tangency condition is
    preserved, and we have $\widetilde{v}_1^W, \widetilde{v}_2^W \in
    T_{\widetilde{p}^W} \mathrm{St}_k(W)$. 
    Let $\pi^W : \mathrm{St}_k(W) \to \GrR_k(W)$ and 
    $\pi^V : \mathrm{St}_k(V) \to \GrR_k(V)$ be the Riemannian submersions.
    We set 
    $$
    p^W := \pi^W(\widetilde{p}^W), \qquad
    v_i^W := d_{\widetilde{p}^W}\pi^W(\widetilde{v}_i^W)
    $$ 
    and use the identity $\iota_G \circ \pi^W = \pi^V \circ \iota_S$ to infer
    $$
        p
        = \iota_G(p^W)
        ,\quad
        v_i 
        = d_{p^W}\iota_G(v_i^W).
    $$ 
    Moreover, since $\iota_G$ is an isometric immersion, we have
    $$
        \|v_i^W\|_{p^W}
        = \|d_{p^W}\iota_G(v_i^W)\|_{p}
        = \|v_i\|_p < \pi/2.
    $$
    Since $\iota_G(\GrR_k(W))$ is totally geodesic in $\GrR_k(V)$
    \cite[Lemma 2.3]{biliotti2014properly}, the curve
    $t\mapsto \iota_G\big(\exp^W_{p^W}(t\, v_i^W)\big)$ is a geodesic in
    $\GrR_k(V)$ with initial point $p$ and initial velocity $v_i$, so by
    uniqueness of geodesics
    $$
        \exp_p(t\, v_i) = \iota_G\big(\exp^W_{p^W}(t\, v_i^W)\big),
        \qquad t\in[0,1].
    $$
    Let $n = \dim(W) \leq 3k$. By \cite[p.~30]{bendokat2024grassmann}, the
    injectivity radius of $\GrR_k(\R^n)$ is $\pi/2$. Fixing any linear isometry
    between $\R^n$ and $W$, \cite[Lemma 2.3]{biliotti2014properly} yields an
    isometry between $\GrR_k(\R^n)$ and $\GrR_k(W)$, so the injectivity radius
    of $\GrR_k(W)$ is also $\pi/2$. From $\exp_p(v_1) = \exp_p(v_2)$ and
    injectivity of $\iota_G$ we obtain
    $\exp^W_{p^W}(v_1^W) = \exp^W_{p^W}(v_2^W)$, hence $v_1^W = v_2^W$ and
    therefore $v_1 = d_{p^W}\iota_G(v_1^W) = d_{p^W}\iota_G(v_2^W) = v_2$, 
    proving injectivity of $\exp_p$ on $B_{\pi/2}(0)$.

    For surjectivity, consider $q\in B_{\pi/2}(p)$. Then, using \cite[Theorem
    6]{harms2012geodesics} there exists a minimal geodesic $\gamma: [0,1] \to \GrR_k(V)$ of length smaller than $\pi/2$ connecting $p$ and $q$, 
    thus $\exp_p(\dot{\gamma}(0)) = q$ and due to \cite[VIII. Corollary 5.5]{lang2012fundamentals} it holds
    $\dot{\gamma}(0) \in B_{\pi/2}(0)$.
    
    Finally, for the minimality of the geodesic, let $v\in B_{\pi/2}(0)$. By \cite[Theorem 6]{harms2012geodesics}
    there exists a minimal geodesic $\gamma: [0,1] \to \GrR_k(V)$ connecting $p$
    and $\exp_p(v)$. The length of $\gamma$ is given by $\| \dot{\gamma}(0)\|_p$
    and hence $\dot{\gamma}(0) \in B_{\pi/2}(0)$. By injectivity of $\exp_p$ on
    $B_{\pi/2}(0)$, we have $\dot{\gamma}(0) = v$ and hence $\gamma(t) =
    \exp_p(tv)$ is minimal. 
\end{proof}

The interpolation result in \cite{jacobsson2025approximating} is an application
of Toponogov's theorem. In \cref{thm:toponogov} in \cref{sec:toponogov} we show
that it holds for infinite-dimensional Riemannian Hopf-Rinow manifolds as well.
Thus, we get the following proposition.

\begin{proposition}[Variant I]
    \label{prop:interpolation_error_grassmann}
    Let $M := \GrR_{\nloc}(\htraceZR{\os{}})$ and $\optSpace_0^{\partial}\in M$.
    Assume that the image of $\specProjR$ is contained in
    $B_{\pi/2}(\optSpace_0^{\partial})$ and let 
    $\log_{\optSpace_0^{\partial}} : B_{\pi/2}(\optSpace_0^{\partial}) \to B_{\pi/2}(0)$ denote the Riemannian logarithm.
    Then, we have the following results:\\[1ex] 
    (i) The mapping $\specProjRTan = \log_{\optSpace_0^{\partial}} \circ \specProjR: D_{\R}\to B_{\pi/2}(0)$ is real
    analytic and there exists $\boldsymbol{\tau}\in \R_{>0}^{\dloc}$ such that $\specProjRTan$ 
    admits a holomorphic extension to the complex region 
    $E(\boldsymbol{\tau})$ defined in \eqref{eq:complex_region}.\smallskip\\[1ex]
    (ii)
    Let $\specProjRTanApprox := \cI_{w,\balpha}(\specProjRTan|_{\Yloc})$ denote the sparse grid interpolation of
    $\specProjRTan|_{\Yloc}$ and let $C_{\mathrm{I}}$ denote the bound
    on the sparse grid interpolation error from
    Lemma~\ref{lemma:multivariate_convergence_rate}, that is,
    $$
        \| \specProjRTan - \specProjRTanApprox \|_{C^0(\Yloc, T_{\optSpace_0^{\partial}} M)} \leq  C_{\mathrm{I}}.
    $$  
    Furthermore, defining $\specProjRApprox := \exp_{\optSpace_0^{\partial}} \circ \specProjRTanApprox$, 
    we also have
    $$
        d_M(\specProjR(\para), \specProjRApprox(\para)) \leq C_{\mathrm{I}}, \quad \forall \para \in \Yloc. 
    $$
\end{proposition}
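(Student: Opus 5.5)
For part (i), I will compose the two real analytic maps already available. By \cref{thm:real_analyticity_spectral_projection}, $\specProjR : D_{\R} \to M$ is real analytic. By \cref{prop:exp_diffeo_and_radial_geodesics_minimal}, $B_{\pi/2}(\optSpace_0^{\partial})$ is a normal neighborhood of $\optSpace_0^{\partial}$, being the diffeomorphic image of the star-shaped ball $B_{\pi/2}(0)$. So \cref{prop:properties_grassmannian} gives that $\log_{\optSpace_0^{\partial}}$ is real analytic there. Since the image of $\specProjR$ lies in this ball by assumption, $\specProjRTan = \log_{\optSpace_0^{\partial}}\circ\specProjR$ is a real analytic map from the open set $D_{\R}\supset\Yloc$ into the real Hilbert space $T_{\optSpace_0^{\partial}}M$.

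To obtain the holomorphic extension, I will use that a real analytic map into a real Banach space is locally given by a convergent power series at each $\para\in\Yloc$. Its complexification converges on a complex polydisc around $\para$ with values in the complexified tangent space. The local extensions agree on overlapping real sets, so by the identity theorem they glue to a holomorphic map on a complex neighborhood $\mathcal{N}$ of $\Yloc$. Compactness of $\Yloc$ and a Lebesgue-number argument then give some $\tau_0>0$ with $E(\boldsymbol{\tau})\subset\mathcal{N}$ for $\boldsymbol{\tau}=(\tau_0,\dots,\tau_0)$. Since $E(\boldsymbol{\tau})$ is closed, I shrink $\tau_0$ slightly so that it lies in the open set $\mathcal{N}$. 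This step is routine, though one must be careful that the polydisc radii are bounded below uniformly over $\Yloc$; compactness handles this.

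For part (ii), I will first apply \cref{lemma:multivariate_convergence_rate} with $V=T_{\optSpace_0^{\partial}}M$, after complexifying $V$ so that the analytic-extension hypothesis makes sense. This gives the stated bound $C_{\mathrm{I}}$; note that $\specProjRTanApprox$ is real-valued because it is a real linear combination of real samples. The main step is then the transfer from the tangent space to the manifold. Fix $\para\in\Yloc$ and set $v=\specProjRTan(\para)$ and $w=\specProjRTanApprox(\para)$. Then $\exp_{\optSpace_0^{\partial}}(v)=\specProjR(\para)$, and I want to show $d_M(\exp(v),\exp(w))\le\|v-w\|$.

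I will follow \cite{jacobsson2025approximating} and use the infinite-dimensional Toponogov comparison \cref{thm:toponogov}. The Grassmannian has nonnegative sectional curvature. Consider the geodesic triangle with vertex $\optSpace_0^{\partial}$ and sides $t\mapsto\exp(tv)$ and $t\mapsto\exp(tw)$. Comparison with the flat model, whose Euclidean triangle has side lengths $\|v\|$ and $\|w\|$ and angle $\angle(v,w)$ at the vertex, bounds the third side by $\|v-w\|$. Applying this requires the two radial geodesics to be minimal. For $v$ this follows from \cref{prop:exp_diffeo_and_radial_geodesics_minimal}, since $\|v\|<\pi/2$. For $w$ I must know that $\|w\|$ is suitably small, or else argue with a different formulation of the hinge version of Toponogov in which only the vertex angle and the lengths of the geodesic sides matter.

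I expect this point to be the main obstacle: verifying that the hypotheses of \cref{thm:toponogov} hold for the geodesic $\exp(tw)$ when $w$ may leave $B_{\pi/2}(0)$. I would first check whether the hinge version used there requires minimality of both sides. If it does, I would reduce to a finite-dimensional totally geodesic subgrassmannian $\GrR_k(W)$, as in the proof of \cref{prop:exp_diffeo_and_radial_geodesics_minimal}. There the Euclidean-model comparison for nonnegative curvature is classical, and I would import the finite-dimensional argument of \cite{jacobsson2025approximating} directly.
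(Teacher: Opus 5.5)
Your plan follows the paper's proof. In (i) you compose the real analytic maps and then derive by hand the holomorphic extension and the choice of $\boldsymbol{\tau}$ by compactness; the paper instead cites a complexification result and the positive distance from the cube to the complement. In (ii) you compare against a flat hinge in a $2$-plane of $T_pM$, $p=\optSpace_0^{\partial}$, via \cref{thm:toponogov}.

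The step you single out as the main obstacle is not one. Your claim that both radial geodesics must be minimal is too strong: part (B) of \cref{thm:toponogov} requires only $\gamma_1$ to be minimal, and since $H=0$ the length condition on $\gamma_2$ is void. Orient the hinge as the paper does:
\begin{itemize}
\item[(a)] $\gamma_1(t)=\exp_p((l_1-t)\widehat{v})$ on $[0,l_1]$, ending at $p$. It is minimal by \cref{prop:exp_diffeo_and_radial_geodesics_minimal} because $l_1=\|v\|_p<\pi/2$.
\item[(b)] $\gamma_2(t)=\exp_p(t\widehat{w})$ on $[0,\|w\|_p]$, which may be arbitrarily long.
\end{itemize}
The hinge angle is then $\angle(v,w)$, and the flat comparison distance is $\|v-w\|_p$.

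To complete the argument you still need three things. Verify that $M$ is Hopf--Rinow. Verify that its sectional curvature is bounded above, which \cref{thm:toponogov} also assumes; the paper uses $\Delta=2$. Treat $v=0$ or $w=0$ separately: there the hinge angle is undefined, but the length of the remaining radial geodesic already gives $d_M(\exp_p v,\exp_p w)\le\|v-w\|_p$.

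Your fallback would also work. Take $W$ spanned by the ranges of the lifts of $p$, $v$, $w$, and the isometric immersion $\iota_G\colon\GrR_{\nloc}(W)\to M$ from the proof of \cref{prop:exp_diffeo_and_radial_geodesics_minimal}. It preserves tangent norms and satisfies $d_M(\iota_G a,\iota_G b)\le d_{\GrR_{\nloc}(W)}(a,b)$, so the finite-dimensional estimate transfers. This route even avoids the infinite-dimensional Toponogov theorem and its upper curvature bound, but it is not needed.
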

\begin{proof}
    (i) Real-analyticity follows from
            \cref{thm:real_analyticity_spectral_projection},
            \cref{prop:exp_diffeo_and_radial_geodesics_minimal} and
            \cref{prop:properties_grassmannian}, since the composition of two
            real analytic functions is again real analytic. 
            
            By \cite[Proposition
            1.2.33]{zech2018sparse}, there exists open $\widetilde{U}\subset
            \C^{\dloc}$ with $D_{\R} \subset \widetilde{U}$ such that
            $\specProjRTan$ admits a holomorphic extension $\widetilde{U}\to
            (T_{\optSpace_0^{\partial}} M)_{\C}$, where
            $(T_{\optSpace_0^{\partial}} M)_{\C}$ is the complexification of
            $T_{\optSpace_0^{\partial}} M$. 
            
            If $\widetilde{U} = \C^{\dloc}$, we
            can take $\tau_i = \infty$. Otherwise, by \cite[Miscellaneous
            Results 6.2, p.~242]{paulmetric} the $\infty$-distance
            $\rho$
            between the compact set $\Yloc$ and the closed set
            $\C^{\dloc}\setminus \widetilde{U}$ is positive. We can thus take
            $\tau_i = \frac{1}{2}\rho$ for all $i=1,\ldots,\dloc$ and
            have $E(\boldsymbol{\tau}) \subset \widetilde{U}$.\medskip

            \noindent
            (ii)    The proof closely follows that of \cite[Theorem
            1]{jacobsson2025approximating} and is an application of Toponogov's theorem, 
            stated in \cref{thm:toponogov} for the convenience of the reader. 
            Let 
            \begin{equation*}
                p = \optSpace_0^{\partial}, \quad v_1 = \specProjRTan(\para)
                \quad \text{and} \quad v_2 = \specProjRTanApprox(\para).
            \end{equation*}
            If one of
            the vectors $v_1$ or $v_2$ is zero, say $v_1=0$, then we have 
            \begin{equation*}
                d_M(\specProjR(\para), \specProjRApprox(\para)) 
                = d_M(p, \exp_p(v_2)) 
                \leq \|v_2\|_p 
                \leq C_{\mathrm{I}}. 
            \end{equation*}
            Thus, we can assume that $v_1, v_2 \neq 0$. Let $l_i = \|v_i\|_p$ for $i=1,2$. 
            Let 
            \begin{align*}
                \gamma_1 : [0, l_1] \to M, \quad t \mapsto \exp_p((l_1-t) \widehat{v}_1), \qquad
                \gamma_2 : [0, l_2] \to M, \quad t \mapsto \exp_p(t \widehat{v}_2),
            \end{align*}
            where $\widehat{v}_i = v_i / \|v_i\|_p$ for $i=1,2$. Due to \cref{prop:exp_diffeo_and_radial_geodesics_minimal},
             $\gamma_1$ is a minimal geodesic. 
             
            We now check the 
            requirements of \cref{thm:toponogov}(B). $M$ is a Hopf-Rinow manifold by 
            \cite[Proposition 2.4]{biliotti2014properly}. By \cite[Corollary 2.1]{biliotti2014properly},
            $M$ has non-negative sectional curvature, thus we can choose $H=0$. 
            Inspection of the proof of \cite[Corollary 2.1]{biliotti2014properly} also gives the upper 
            bound $\Delta=2$, because there the statement is proven by reducing to the finite-dimensional case,
            where the sectional curvature is bounded from above by $2$, see \cite[(4.3)]{bendokat2024grassmann}. 
            
            We choose $M_0$ to be a $2$-dimensional subspace of $T_pM$ containing $v_1$ and $v_2$
            and endow it with the 
            constant Riemannian metric induced by $T_pM$. We define 
            \begin{align*}
                \overline{\gamma_1} : [0, l_1] \to M_0, \quad t \mapsto (l_1-t) \widehat{v}_1, \qquad
                \overline{\gamma_2} : [0, l_2] \to M_0, \quad t \mapsto t \widehat{v}_2. 
            \end{align*}
            These are minimal geodesics in $M_0$ of length $l_1$ and $l_2$, respectively,
            and they define a comparison hinge as in \cref{thm:toponogov}(B).
            Thus all conditions of \cref{thm:toponogov}(B) 
            are satisfied and we obtain
            \begin{equation*}
                d_M(\specProjR(\para), \specProjRApprox(\para)) 
                = d_M(\exp_p(v_1), \exp_p(v_2)) 
                \leq d_{M_0}(\overline{\gamma_1}(0), \overline{\gamma_2}(l_2)) 
                = \|v_1 - v_2\|_p
                \leq C_{\mathrm{I}}.\qedhere
            \end{equation*}
\end{proof}
With the same arguments, we can also prove the following proposition for $\specProjPuR$.
\begin{proposition}[Variant II]
    \label{prop:interpolation_error_grassmann_practical}
    Let
    $M := \GrR_{\nloc}(\Hdir{\os{}})$ be the Grassmannian
    formed with the standard $H^1(\os{})$ inner product. Let $\optSpace_0\in M$.
    Assume that the image of $\specProjPuR$ is contained in
    $B_{\pi/2}(\optSpace_0)$ and let 
    $\log_{\optSpace_0} : B_{\pi/2}(\optSpace_0) \to B_{\pi/2}(0)$ denote the Riemannian logarithm.
    Then, the following results~hold:\\[1ex]
    (i) The mapping $\specProjPuRTan = \log_{\optSpace_0} \circ \specProjPuR: D_{\R}\to B_{\pi/2}(0)$ is real
    analytic and there exists $\boldsymbol{\tau}\in \R_{>0}^{\dloc}$ such that $\specProjPuRTan$ 
    admits a holomorphic extension to the complex region 
    $E(\boldsymbol{\tau})$ defined in \eqref{eq:complex_region}.\\[1ex]
    (ii)
    Let $\specProjPuRTanApprox := \cI_{w,\balpha}(\specProjPuRTan|_{\Yloc})$ denote the sparse grid interpolation of
    $\specProjPuRTan|_{\Yloc}$ and let $C_{\mathrm{II}}$ denote again the bound 
     on the sparse grid interpolation error from
     Lemma~\ref{lemma:multivariate_convergence_rate}, that is,
    $$
        \| \specProjPuRTan - \specProjPuRTanApprox \|_{C^0(\Yloc, T_{\optSpace_0} M)} \leq  C_{\mathrm{II}}.
    $$  
    Furthermore, defining $\specProjPuRApprox := \exp_{\optSpace_0} \circ \specProjPuRTanApprox$, 
    we also have
    $$
        d_M(\specProjPuR(\para), \specProjPuRApprox(\para)) \leq C_{\mathrm{II}}, \quad \forall \para \in \Yloc. 
    $$
\end{proposition}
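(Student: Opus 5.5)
The proof will mirror that of \cref{prop:interpolation_error_grassmann}, replacing $\htraceZR{\os{}}$ with the scalar product $a^{\partial}_{\para_0,\os{}}$ by the Hilbert space $\Hdir{\os{}}$ with its standard $H^1(\os{})$ scalar product. None of the Grassmannian results used there depend on the particular Hilbert space. This applies to \cref{prop:properties_grassmannian}, \cref{prop:exp_diffeo_and_radial_geodesics_minimal}, \cref{lem:grassmannian_distance_principal_angles}, the curvature bounds from \cite{biliotti2014properly}, and \cref{thm:toponogov}; they hold for an arbitrary real Hilbert space $V$.

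For (i), I will first invoke \cref{thm:real_analyticity_spectral_projection}, which gives real analyticity of $\specProjPuR: D_{\R}\to M$. Its proof relies on the analyticity of $\para\mapsto \AharmEx^0_{\para}$ from \cref{prop:partition_of_unity_holomorphic}: the frame $\AharmEx^0_{\para}\specProj_{\para}\u_i^{\partial}$ is a real analytic local frame, and its linear independence follows from injectivity of $\AharmEx^0_{\para}$ (\cref{lem:AharmEx_zero_isometry}) together with the Gram determinant argument. By assumption, the image of $\specProjPuR$ lies in $B_{\pi/2}(\optSpace_0)$. \Cref{prop:exp_diffeo_and_radial_geodesics_minimal} shows that this ball is a normal neighborhood, with star-shaped preimage $B_{\pi/2}(0)$. \Cref{prop:properties_grassmannian} then makes $\log_{\optSpace_0}$ real analytic there, so $\specProjPuRTan$ is real analytic as a composition of real analytic maps. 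The holomorphic extension follows exactly as before. \cite[Proposition 1.2.33]{zech2018sparse} gives an open $\widetilde U\supset D_{\R}$ carrying a holomorphic extension into the complexification $(T_{\optSpace_0}M)_{\C}$. Compactness of $\Yloc$ gives a positive $\infty$-distance $\rho$ to $\C^{\dloc}\setminus\widetilde U$, and choosing $\tau_i=\rho/2$ yields $E(\boldsymbol\tau)\subset\widetilde U$.

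For (ii), the tangent-space bound by $C_{\mathrm{II}}$ follows from (i) and \cref{lemma:multivariate_convergence_rate}, with $V=T_{\optSpace_0}M$ as a Banach space. For the geodesic distance, fix $\para$ and set $p=\optSpace_0$, $v_1=\specProjPuRTan(\para)$ and $v_2=\specProjPuRTanApprox(\para)$. If $v_1=0$ or $v_2=0$, the estimate $d_M(p,\exp_p(v))\le\|v\|_p$ suffices. Otherwise, I will build the hinge at $p$ from the geodesic $\gamma_1(t)=\exp_p((l_1-t)\widehat v_1)$, which is minimal by \cref{prop:exp_diffeo_and_radial_geodesics_minimal} since $\|v_1\|_p<\pi/2$, and $\gamma_2(t)=\exp_p(t\widehat v_2)$. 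The comparison hinge is built in a flat $2$-plane $M_0\subset T_pM$ spanned by $v_1,v_2$. By \cite{biliotti2014properly}, $M$ is Hopf--Rinow and has sectional curvature in $[0,2]$. \Cref{thm:toponogov}(B) with $H=0$ then gives $d_M(\exp_pv_1,\exp_pv_2)\le\|v_1-v_2\|_p\le C_{\mathrm{II}}$.

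The main obstacle is the frame argument for $\specProjPuR$ in (i). It requires that $\AharmEx^0_{\para}$ be real analytic as a map into $\linop{\htraceZR{\os{}}}{\Hdir{\os{}}}$, and that its range be real-valued for real $\para$. The first point is supplied by \cref{prop:partition_of_unity_holomorphic}, and the second follows because $A(\para)$ is real. Note also that $\gamma_2$ need not be minimal here, which is permitted in the hinge version (B) of \cref{thm:toponogov}.
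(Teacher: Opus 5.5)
Your proposal is correct and follows the paper's route: the paper proves this proposition by repeating the argument of \cref{prop:interpolation_error_grassmann} in $\Hdir{\os{}}$, using the real analyticity of $\specProjPuR$ from \cref{thm:real_analyticity_spectral_projection}, the analyticity of $\log_{\optSpace_0}$ on $B_{\pi/2}(\optSpace_0)$, the positive-distance choice of $E(\boldsymbol{\tau})$, and the flat-hinge comparison via \cref{thm:toponogov}(B) with $H=0$, exactly as you do. One small correction: when $v_1$ and $v_2$ are collinear, their span is one-dimensional, so take $M_0$ to be any $2$-dimensional subspace of $T_{\optSpace_0}M$ containing them, as the paper does.
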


\begin{remark}
    Our proof in \cref{prop:interpolation_error_grassmann} does not give a quantitative 
    result on the holomorphy region $E(\boldsymbol{\tau})$. This region can be smaller 
    than $\widetilde{D}$. The reason is that the Riemannian logarithm is only real analytic. 
    One could also try to consider everything in the complex setting, including the 
    Grassmannian. The arguments in \cref{thm:real_analyticity_spectral_projection} 
    would then also work to show that 
    $\optSpacePu$ is holomorphic on $\widetilde{D}$. The problem is 
    that the Riemannian logarithm for the complex Grassmannian cannot be holomorphic, see
    \cite[Theorem 10]{bochner1947curvature}, namely as stated in 
    \cite{jentsch2020riemannian} ``Riemannian normal coordinates 
    on K{\"a}hler manifolds are not complex coordinates unless the manifold is flat.'' 
    Thus, the map $\specProjRTan$ would again be only real analytic. This is why we
    restricted directly to the real domain in \cref{thm:real_analyticity_spectral_projection}
    and only have the non-quantitative result on the holomorphy region.  
\end{remark}

\subsection{Local approximation error}
Having established an approximation result for the interpolation of $\specProjR$ and 
$\specProjPuR$ on the Grassmannian, we now analyze the impact on the GFEM error. 
\subsubsection{Variant I}
For this variant, the local approximation spaces are given by the
$A(\para)$-harmonic extensions of the interpolation of $\specProjRApprox$ of
$\specProjR$ in $\GrR_{\nloc}(\htraceZR{\os{}})$ as analyzed in Proposition~\ref{prop:interpolation_error_grassmann}. Note that the interpolation is done in
the ambient space $\htraceZR{\os{}}$ and to obtain functions on the whole
subdomain, we have to compute the $A(\para)$-harmonic extensions, requiring the
solution of local problems after the interpolation of the eigenspaces. We define
\begin{equation}
    \label{def:local_approximation_spaces}
    S^{\para}_{\nloc}(\om{}) := 
    \begin{cases}
    \AharmEx_{\para}(\specProjRApprox(\para))|_{\om{}} + \R,
    &\text{if } \partial\os{}\cap\partial\Omega=\emptyset,\\[6pt]
    \AharmEx_{\para}(\specProjRApprox(\para))|_{\om{}},
    &\text{if } \partial\os{}\cap\partial\Omega\neq\emptyset.
    \end{cases}
\end{equation}
We refer to this choice of local approximation space as Variant I.

The quality of this space is measured by how well they approximate functions
in $H_{A(\para),0}(\os{})$ after applying the partition of unity~$\chi$. This
local approximation error can be controlled by the $(\nloc{}+1)$-th nearly
exponentially decaying eigenvalue $\lambda_{\nloc{}+1}(\para)$, capturing the
approximation quality of the optimal eigenspace, and the angle between the
optimal and interpolated eigenspaces, measuring the quality loss due to
interpolation. Given two finite-dimensional subspaces $V, W \subset
\htraceZR{\os{}}$, we define the angle $\beta_{\para}(V, W)$ by
\begin{equation}
    \label{def:angle_between_subspaces}
    \cos^2(\beta_{\para}(V, W)) 
    := \inf\limits_{\v \in V} \frac{a_{\os{}, \para}^{\partial}(\orthProj{W}\v, \v)}{a_{\os{}, \para}^{\partial}(\v, \v)},
\end{equation}
where $\orthProj{W}$ denotes the orthogonal projection onto $W$ with respect to
$a_{\os{}, \para}^{\partial}(\cdot, \cdot)$. We write $\beta(\para) :=
\beta_{\para}(\specProjR(\para), \specProjRApprox(\para))$. The following
theorem bounds the local approximation error using the interpolated eigenspaces
in terms of $\lambda_{\nloc{}+1}(\para)$ and $\beta(\para)$. It improves the
result in \cite{lipton2022angles} by using ideas on oblique projections in 
\cite{adcock2013beyond,xu2002method}.
\begin{theorem}[Variant I]
    \label{thm:local_approximation_error}
    Let the assumptions of \cref{prop:interpolation_error_grassmann} be satisfied.
    Let $\para\in\Yloc$ and
    suppose $s = C_A \sin(C_{\mathrm{I}}) < 1$ and $C_{\mathrm{I}} < \pi/2$, 
    where $C_{\mathrm{I}}$ is the interpolation error bound from
    \cref{prop:interpolation_error_grassmann} and 
    $$
        C_A = \|\sqrt{A_{\para} / A_{\para_0}} \|_{L^\infty(\os{})} \cdot \|\sqrt{A_{\para_0} / A_{\para}} \|_{L^\infty(\os{})}
    $$
    measures the contrast between the coefficients $A_{\para}$ and $A_{\para_0}$.
    Then, with $S_{\nloc}^{\para}(\om{})$ defined in \eqref{def:local_approximation_spaces},
    \begin{align*}
    \inf\limits_{\v\in S_{\nloc}^{\para}(\om{})}
    \| \chi(\u - \v) \|_{a, \para, \om{}}
    &\leq 
    \lambda_{\nloc{}+1}^{1/2}(\para) 
    (\cos \beta(\para))^{-1} \| \u \|_{a, \para, \os{}} \\
    &\leq  
        \lambda_{\nloc{}+1}^{1/2}(\para) 
        \bigl(1 - s^2\bigr)^{-1/2} \| \u \|_{a, \para, \os{}}
    \end{align*}
    holds for all $\u\in H_{A(\para)}(\os{})\cap H^1(\os{};\R)$ with $ \u|_{\partial\os{}\cap\partial\Omega} = 0$.
\end{theorem}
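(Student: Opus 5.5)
The plan is to transfer everything to the trace space $\htraceZR{\os{}}$ using the isometry $\AharmEx_{\para}^{0}$ from \cref{lem:AharmEx_zero_isometry}, and then to approximate with an oblique projection onto the interpolated space. Throughout, write $V:=\specProjR(\para)$ and $W:=\specProjRApprox(\para)$; both have dimension $\nloc$.

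\emph{Reduction to the trace space.} Let $\u\in H_{A(\para)}(\os{})\cap H^1(\os{};\R)$ vanish on $\partial\os{}\cap\partial\Omega$. Set $\u^{\partial}:=\gamma\u$ for a boundary domain, and $\u^{\partial}:=\gamma\u-|\partial\os{}|^{-1}\int_{\partial\os{}}\gamma\u$ for an interior domain. By the proof of \cref{lem:AharmEx_zero_isometry}, $\AharmEx_{\para}^{0}\u^{\partial}=\u-c$, where $c=\mathcal{M}_{\para}(\u)$ is a real constant for interior domains and $c=0$ for boundary domains. Since constants lie in the energy kernel, $\|\u^{\partial}\|_{a^{\partial}_{\os{},\para}}=\|\u-c\|_{a,\para,\os{}}=\|\u\|_{a,\para,\os{}}$. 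For any $\w^{\partial}\in W$, the function $\v:=\AharmEx^{0}_{\para}\w^{\partial}|_{\om{}}+c$ lies in $S^{\para}_{\nloc}(\om{})$ by \eqref{eq:equal_local_spaces_AharmEx_zero}. For this choice, $\chi(\u-\v)=\pu_{\para}(\u^{\partial}-\w^{\partial})$. It therefore suffices to find $\w^{\partial}\in W$ with
\[
\|\pu_{\para}(\u^{\partial}-\w^{\partial})\|_{a,\para,\om{}}\le\lambda_{\nloc+1}^{1/2}(\para)(\cos\beta(\para))^{-1}\|\u^{\partial}\|_{a^{\partial}_{\os{},\para}}.
\]

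\emph{Oblique projection.} Work in the real Hilbert space $(\htraceZR{\os{}},a^{\partial}_{\os{},\para})$; this is legitimate by \cref{lem:AharmEx_zero_isometry} and \cref{prop:spectral_projection_properties}(iv). Suppose $\cos\beta(\para)>0$, which is shown in the last step. Then $\orthProj{W}|_V$ is injective, so $W\cap V^{\perp}=\{0\}$, and because $\dim V=\dim W$ we get $\htraceZR{\os{}}=W\oplus V^{\perp}$. Let $Q$ be the projection onto $W$ along $V^{\perp}$ and take $\w^{\partial}:=Q\u^{\partial}$. Two facts then give the first inequality:
\begin{itemize}
\item Since $(I-Q)\u^{\partial}\in\ker Q=V^{\perp}=\mathrm{range}(I-\specProj_{\para})$, the min--max characterization of the eigenvalues of $T_{\para}=\pu_{\para}^{*}\pu_{\para}$ gives $\|\pu_{\para}(I-Q)\u^{\partial}\|\le\lambda_{\nloc+1}^{1/2}\|(I-Q)\u^{\partial}\|$.
\item By the Xu--Zikatanov identity \cite{xu2002method}, $\|I-Q\|=\|Q\|$. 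I will check that $\|Q\|=(\cos\beta_{\para}(V,W))^{-1}$ by computing $Q$ through the inverse of $\orthProj{V}|_{W}:W\to V$. The angle defined in \eqref{def:angle_between_subspaces} uses $\orthProj{W}|_V$, and the equal dimensions make the two directions give the same extremal cosine. This identity is the step I expect to need the most care.
\end{itemize}

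\emph{Bounding the angle.} For the second inequality I need $\sin\beta(\para)\le s$. Here $\sin\beta_{\para}(V,W)=\sup_{\v\in V}\mathrm{dist}_{\para}(\v,W)/\|\v\|_{\para}$, with distances and norms taken in $a^{\partial}_{\os{},\para}$. The same quantity measured in the reference inner product $a^{\partial}_{\os{},\para_0}$ equals $\sin\theta_{\max}$, the largest principal angle. By \cref{lem:grassmannian_distance_principal_angles} and \cref{prop:interpolation_error_grassmann}(ii), $\theta_{\max}\le d_M(V,W)\le C_{\mathrm{I}}<\pi/2$.

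To compare the two inner products, use that the harmonic extension minimizes energy among all extensions with the given trace. Testing the $A(\para)$-energy with the competitor $\AharmEx_{\para_0}$, and vice versa, gives
\[
a^{\partial}_{\para}\le\|\sqrt{A_{\para}/A_{\para_0}}\|_\infty^2\,a^{\partial}_{\para_0},\qquad a^{\partial}_{\para_0}\le\|\sqrt{A_{\para_0}/A_{\para}}\|_\infty^2\,a^{\partial}_{\para}.
\]
Dividing the first distance bound by the second norm bound yields $\sin\beta(\para)\le C_A\sin\theta_{\max}\le C_A\sin C_{\mathrm{I}}=s<1$. This also confirms $\cos\beta(\para)>0$, and $(\cos\beta)^{-1}\le(1-s^2)^{-1/2}$ follows.
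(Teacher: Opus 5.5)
Your proposal is correct and follows essentially the paper's proof. You use the same reduction to the trace space via the isometry $\AharmEx^{0}_{\para}$, absorbing the constant on interior domains, and the same oblique projection onto $W$ along $V^{\perp}$ (the paper's $U=\orthProj{W}(V)$ equals $W$ here since the dimensions agree); you also use the same energy-minimization comparison of $a^{\partial}_{\os{},\para}$ and $a^{\partial}_{\os{},\para_0}$ to get $\sin\beta(\para)\le C_A\sin C_{\mathrm{I}}$, differing only in citing $\|I-Q\|=\|Q\|$ from \cite{xu2002method} instead of \cite{adcock2013beyond} and in changing the inner product directly for the largest angle rather than through the general proposition on principal angles under isomorphisms.
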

For the proof, which will be given at the end of this section, we need some lemmas.
\begin{remark}
    The constant $C_A$ is always at least one and equals one if and only if
    the ratio $A_{\para}(x)/A_{\para_0}(x)$ is constant in $x$ on $\os{}$.
    It thus does not necessarily depend on the absolute contrast of $A_{\para}$, but on how
    much $A$ changes between $\para_0$ and $\para$.
\end{remark}
Combining this local result with the 
global-by-local estimate in \cref{thm:global_error_gfem} and choosing 
$S^{\para}_{\nloc}(\om{}) = \AharmEx_{\para}(\specProjRApprox(\para))|_{\om{}}$ on each 
subdomain, we obtain a 
global error bound in terms of the interpolation errors $C_{\mathrm{I}}$ and the eigenvalues 
$\lambda_{\nloc{}+1}(\para)$. This assumes that the local particular functions and 
$A(\para)$-harmonic extensions are computed exactly.
\begin{remark}
    \label{rem:practical_method}
    We note that \cref{thm:local_approximation_error} can't be applied if we interpolated the original
    MS-GFEM eigenspaces $\specProjPuR: \Yloc \to \GrR_{\nloc}(\Hdir{\os{}})$ instead of 
    $\specProjR:\Yloc \to \GrR_{\nloc}(\htraceZR{\os{}})$.
    This is because interpolation in $\Hdir{\os{}}$ does not guarantee that at a 
    parameter $\para$, the interpolated space is still a subspace of $H_{A(\para), 0}(\os{})$,
    which is required for the proof of \cref{thm:local_approximation_error}. 
    From a practical perspective, using the local approximation spaces in 
   ~\eqref{def:local_approximation_spaces} requires the operator harmonic extension and therefore that
    local problems are solved for each parameter $\para$ after the interpolation. 
    The method using an interpolation $\specProjPuRApprox$ of
    $\specProjPuR$ for the local approximation spaces and without solving local problems
    for each parameter $\para$ after interpolation will be referred to as Variant II.
\end{remark}
We begin by establishing a bound similar to 
\cite[Theorem~3.8]{lipton2022angles}.
\begin{lemma}
  \label{lem:range_approximation_error_angle}
  For a compact linear operator \(L\in\linop{X}{Y}\) between Hilbert spaces \(X\)
and $Y$, let \(\{\v_i\}_{i\geq 1}\subset X\) be a complete orthonormal
system of eigenvectors of \(L^*L\)
corresponding to eigenvalues $\lambda_1\geq\lambda_2\geq\cdots\geq 0$
enumerated 
with multiplicity. 
Let \(V=\operatorname{span}\{\v_1,\ldots,\v_k\}\) for some
\(k<\dim X\) and consider a finite-dimensional space
    \(W\subset X\) with orthogonal
    projection \(\orthProj{W}\) such that
    \[
        \cos^2\bigl(\beta(V,W)\bigr)
        :=
        \inf_{\v\in V}
        \frac{(\orthProj{W}\v,\v)_X}{(\v,\v)_X}
        >0.
    \]
    Then, for all \(\u\in X\), there exists \(\w\in W\) such that
    \[
        \|\w\|_X
        \leq
        \bigl(\cos\beta\bigr)^{-1}\|\u\|_X
    \qquad
    \text{and}
    \qquad
        \|L(\u-\w)\|_Y
        \leq
        \lambda_{k+1}^{1/2}
        \bigl(\cos\beta\bigr)^{-1}\|\u\|_X.
    \]
\end{lemma}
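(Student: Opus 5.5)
The plan is to build $\w$ as an oblique projection of $\u$ onto $W$ along $V^\perp$, following \cite{adcock2013beyond,xu2002method}. Because $\cos\beta>0$, the restriction $\orthProj{W}|_V : V\to W$ is injective. Indeed, for $\v\in V$ we have $\|\orthProj{W}\v\|_X^2=(\orthProj{W}\v,\v)_X\geq\cos^2\beta\,\|\v\|_X^2$. Set $W_0:=\orthProj{W}V\subset W$; then $\dim W_0=k=\dim V$.

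The first step is to check that $X = W_0\oplus V^\perp$ with oblique projection $Q$ onto $W_0$ along $V^\perp$, and that $\|Q\|\le(\cos\beta)^{-1}$. For $\u\in X$, the defining condition is that $Q\u\in W_0$ satisfies $(Q\u,\v)_X=(\u,\v)_X$ for all $\v\in V$. Write $Q\u=\orthProj{W}\z$ with $\z\in V$. The condition then becomes $(\orthProj{W}\z,\v)_X=(\orthProj{V}\u,\v)_X$ for $\v\in V$. The form $(\orthProj{W}\z,\v)_X=(\orthProj{W}\z,\orthProj{W}\v)_X$ is coercive on $V$ with constant $\cos^2\beta$, so Lax--Milgram in finite dimensions gives a unique $\z$.

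For the norm bound, test with $\v=\z$. This gives $\|\orthProj{W}\z\|^2=(\orthProj{V}\u,\z)\le\|\u\|\,\|\z\|$. Combining with $\|\z\|\le(\cos\beta)^{-1}\|\orthProj{W}\z\|$ yields $\|Q\u\|=\|\orthProj{W}\z\|\le(\cos\beta)^{-1}\|\u\|$. We then set $\w:=Q\u\in W$, which gives the first estimate.

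For the second estimate, note that $\u-\w\in V^\perp$ by construction. Since $V$ is spanned by the first $k$ eigenvectors of $L^*L$, for any $\boldsymbol{e}\in V^\perp$ we have $\|L\boldsymbol{e}\|_Y^2=(L^*L\boldsymbol{e},\boldsymbol{e})_X=\sum_{i>k}\lambda_i|(\boldsymbol{e},\v_i)|^2\le\lambda_{k+1}\|\boldsymbol{e}\|_X^2$. This uses completeness of $\{\v_i\}$; in a possible kernel direction the eigenvalue is $0\le\lambda_{k+1}$. It remains to bound $\|\u-\w\|=\|(I-Q)\u\|$. Since $Q$ is a nontrivial idempotent on a Hilbert space, $\|I-Q\|=\|Q\|$ (the Kato/Szyld identity), so $\|\u-\w\|\le(\cos\beta)^{-1}\|\u\|$. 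Hence $\|L(\u-\w)\|_Y\le\lambda_{k+1}^{1/2}(\cos\beta)^{-1}\|\u\|_X$.

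The main obstacle is the step $\|I-Q\|=\|Q\|$. If $Q$ is $0$ or $I$ the bound is trivial. Otherwise the identity is the standard result for idempotents, and I would cite \cite{xu2002method}, which proves exactly this in the Hilbert space setting. A less elegant fallback is a direct estimate, $\|\u-\w\|\le\|\u\|+\|\w\|$, which only gives the weaker factor $1+(\cos\beta)^{-1}$. The sharp constant therefore genuinely relies on the idempotent identity.
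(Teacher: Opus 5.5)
Your proof is correct and follows essentially the same route as the paper. Your $Q$ is exactly the paper's oblique projection $\mathcal{W}_{U,V^\perp}=(\orthProj{V}|_U)^{-1}\orthProj{V}$ onto $U=\orthProj{W}(V)$ along $V^\perp$, and the bound $\|I-Q\|\leq(\cos\beta)^{-1}$ you obtain from the identity $\|I-Q\|=\|Q\|$ is precisely what the paper extracts from \cite[Theorem~3.4]{adcock2013beyond}; the only cosmetic difference is that you bound $\|Q\|$ by a coercivity argument, whereas the paper uses the adjoint relation $\sigma_{\min}(\orthProj{V}|_U)=\sigma_{\min}(\orthProj{W}|_V)=\cos\beta$.
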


\begin{proof}
    In order to apply \cite[Theorem 3.4]{adcock2013beyond}, we
    define the space
    $
        U:=\orthProj{W}(V)
    $
    and, using the notation of \cite{adcock2013beyond}, consider
    \[
        \mathcal{W} := \mathcal{W}_{U,V^\perp}\colon X\longrightarrow X,
        \qquad
        \mathcal{W}_{U,V^\perp}(\u)
        :=
        \bigl(\orthProj{V}|_U\bigr)^{-1}\orthProj{V}\u.
    \]
    For this to be well-defined, we need to show that
    $
        \orthProj{V}|_U\colon U\longrightarrow V
    $
    is invertible. 
    Since \(\orthProj{W}\) is an orthogonal projection,
    \((\orthProj{W}\v,\v)_X=\|\orthProj{W}\v\|_X^2\) for all \(\v\in X\), hence
    \begin{equation*}
        \cos\bigl(\beta(V,W)\bigr)
        =
        \inf_{\v\in V}
        \frac{\|\orthProj{W}\v\|_X}{\|\v\|_X}
        >0.
    \end{equation*}
    In particular \(\orthProj{W}|_V\) is injective, so
    \(U=\orthProj{W}(V)\subset W\) satisfies \(\dim U=\dim V=k\).
    Next, we show that
    $
        \orthProj{W}|_V\colon V\longrightarrow U
    $
    is the adjoint of \(\orthProj{V}|_U\). For \(\u\in U\), \(\v\in V\), it holds
    \[
        (\orthProj{V}|_U\u,\v)_X
        =
        (\u,\v)_X
        =
        (\u,\orthProj{W}|_V \v)_X,
    \]
    since \(U\subset W\). Denoting the smallest singular value of an
    operator by \(\sigma_{\min}(\cdot)\), we conclude
    \begin{equation}
        \label{eq:orth_proj_singular_value}
        \sigma_{\min}(\orthProj{V}|_U)
        =
        \sigma_{\min}(\orthProj{W}|_V)
        =
        \inf_{\v\in V}
        \frac{\|\orthProj{W}\v\|_X}{\|\v\|_X}
        =
        \cos\bigl(\beta(V,W)\bigr)
        >0.
    \end{equation}
    Hence
    \(\orthProj{V}|_U\) is bijective.
    It follows that \(\mathcal{W}\) is an oblique
    projection, that is
    \begin{align*}
        \mathcal{W}^2
        =
        \bigl(\orthProj{V}|_U\bigr)^{-1}\orthProj{V}
        \bigl(\orthProj{V}|_U\bigr)^{-1}\orthProj{V} 
        =
        \bigl(\orthProj{V}|_U\bigr)^{-1}\orthProj{V}
        =
        \mathcal{W}.
    \end{align*}
    Moreover, we have
    $
        \ker\mathcal{W} 
        =
        \ker \orthProj{V}
        =
        V^\perp 
    $
    and     
    $    
        \operatorname{range}\mathcal{W}
        =
        \operatorname{range}\bigl(\orthProj{V}|_U\bigr)^{-1}
        =
        U.
    $
    Furthermore, the identity
    $
        \orthProj{V}\mathcal{W}=\orthProj{V},
    $
    implies
    $
        \operatorname{range}
        \bigl(\operatorname{id}_X-\mathcal{W}\bigr)
        \subset V^\perp.
    $
    The spaces \(U\) and \(V^\perp\) are closed, because \(U\) is
    finite-dimensional and \(V^\perp\) is an orthogonal complement. Moreover, we
    have $X = U \oplus V^\perp$, because every \(\u\in X\) decomposes as
    \(\u=\mathcal{W}\u+(\operatorname{id}_X-\mathcal{W})\u\in U+V^\perp\) and
    the sum is direct, since for every \(\u\in U\cap V^\perp\) there exists
    \(\v\in V\) such that \(\u=\orthProj{W}\v\) and  
    $
        (\u,\u)_X
        = (\orthProj{W}\v,\v)_X
        = 0.
    $ 
    Now, let \(\u\in X\) and set
    $
        \w=\mathcal{W}(\u).
    $
    We apply \cite[Theorem
    3.4]{adcock2013beyond} (with $V^{\perp}$ instead of $V$) to conclude
    \begin{align*}
        \|L(\u-\w)\|_Y
        &=
        \bigl\|
            L\bigl(\operatorname{id}_X-\mathcal{W}\bigr)\u
        \bigr\|_Y 
        \leq
        \|L\|_{\linop{V^\perp}{Y}}
        \bigl\|
            \operatorname{id}_X-\mathcal{W}
        \bigr\|_X
        \|\u\|_X \\
        &\leq
        \lambda_{k+1}^{1/2}
        \left(
            \inf_{\v\in U}
            \frac{\|\orthProj{V}\v\|}{\|\v\|}
        \right)^{-1}
        \|\u\|_X 
        =
        \lambda_{k+1}^{1/2}
        \sigma_{\min}(\orthProj{V}|_U)^{-1}
        \|\u\|_X 
        =
        \lambda_{k+1}^{1/2}
        \cos(\beta)^{-1}
        \|\u\|_X.
    \end{align*}
    Finally, using~\eqref{eq:orth_proj_singular_value}, we obtain
    \begin{equation*}
        \|\w\|_X
        \leq
        \bigl\|\bigl(\orthProj{V}|_U\bigr)^{-1}\bigr\|
        \|\orthProj{V}\u\| 
        \leq
        \frac{1}{\sigma_{\min}(\orthProj{V}|_U)}
        \|\u\|_X
        =
        \cos\bigl(\beta(V,W)\bigr)^{-1}\|\u\|_X. \qedhere
    \end{equation*}
\end{proof}
\begin{corollary}
    \label{cor:lipton_sinz_trace_space_coefficient_dependent}
    Let $\para\in\Yloc$ and suppose that $\cos(\beta(\para)) > 0$. Then
    $$
        \sup_{\u^{\partial}\in \htraceZR{\os{}}}\;\inf_{\v^{\partial}\in \specProjRApprox(\para)}
        \frac{\| \pu{\para}(\u^{\partial} - \v^{\partial}) \|_{a, \para, \om{}}}{\| \u^{\partial} \|_{a^{\partial}, \para, \os{}}}
        \leq \lambda_{\nloc{}+1}^{1/2}(\para) (\cos \beta(\para))^{-1}.
    $$
\end{corollary}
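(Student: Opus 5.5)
The plan is to apply \cref{lem:range_approximation_error_angle} with
\[
X = \bigl(\htraceZR{\os{}},a_{\os{},\para}^{\partial}\bigr),\qquad
Y = \bigl(\HzR{\om{}},a_{\om{},\para}\bigr),\qquad
L = \pu{\para}|_{X},\qquad
V = \specProjR(\para),\qquad
W = \specProjRApprox(\para).
\]
Here $\para\in\Yloc$ is real, and in this case both sesquilinear forms restrict to real inner products by \cref{prop:inner_product_for_real}. Moreover $\pu{\para}$ maps real traces to real functions, as in the proof of \cref{prop:spectral_projection_properties}(iv).

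First I check the hypotheses of the lemma.
\begin{itemize}
\item[(a)] $L$ is compact. This follows from \cref{lem:compactness_pu}, since restricting to the closed real subspace preserves compactness.
\item[(b)] $L^*L$ is the restriction of $T_{\para}$ to $X$. By \cref{prop:spectral_projection_properties}(iv), $T_{\para}$ maps $X$ into itself. Its eigenfunctions can be chosen real-valued and form a complete orthonormal system of $X$ (spectral theorem for compact self-adjoint operators, completed by an orthonormal basis of the kernel). The eigenvalues are ordered decreasingly as in \cref{assumption:spectral_gap}.
\item[(c)] $V=\operatorname{span}\{\v_1,\dots,\v_{\nloc}\}$. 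This holds by \cref{prop:spectral_projection_properties}(iii) together with the definition of $\specProjR$. We have $\nloc<\dim X$ because $X$ is infinite dimensional.
\item[(d)] $W$ is finite dimensional, namely $\nloc$-dimensional as a point of the Grassmannian.
\item[(e)] The positivity of $\cos^2\beta$ required in the lemma is exactly the assumption $\cos\beta(\para)>0$. The definition \eqref{def:angle_between_subspaces} coincides with the one in the lemma, since $\orthProj{W}$ is taken with respect to $a^{\partial}_{\os{},\para}$, which is the inner product of $X$.
\end{itemize}

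Given $\u^{\partial}\in X$, the lemma then produces $\w\in W$ with
\[
\|\pu{\para}(\u^{\partial}-\w)\|_{a,\para,\om{}}\le\lambda_{\nloc+1}^{1/2}(\para)(\cos\beta(\para))^{-1}\|\u^{\partial}\|_{a^{\partial},\para,\os{}}.
\]
Taking $\v^{\partial}=\w$ bounds the infimum. Dividing by the norm and taking the supremum over $\u^{\partial}\neq0$ gives the claim; the case $\u^{\partial}=0$ is trivial.

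The only subtle point is the identification in (b) and (c). The eigenvalues $\lambda_k(\para)$ in the statement are those of the complex operator $T_{\para}$, while the lemma needs the eigenvalues of the real restriction. Since $T_{\para}$ commutes with complex conjugation, a real orthonormal eigenbasis of $X$ is also a complex orthonormal eigenbasis of $\htraceZC{\os{}}$, and it has the same eigenvalues with the same multiplicities. Hence $\lambda_{\nloc+1}$ and the dominant space agree in both settings. This also shows that $V$ is well defined, thanks to the gap in \cref{assumption:spectral_gap}.
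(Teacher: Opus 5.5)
Your proposal is correct and follows the paper's route: the corollary is stated there as an immediate consequence of \cref{lem:range_approximation_error_angle}, applied with $L=\pu{\para}$ on $(\htraceZR{\os{}},a^{\partial}_{\os{},\para})$, $V=\specProjR(\para)$ and $W=\specProjRApprox(\para)$. Your additional checks go beyond what the paper writes out but are consistent with it: compactness of the real restriction, the adjoint via \cref{prop:spectral_projection_properties}(iv), and the agreement of the real and complex eigenvalues and dominant spaces under the spectral gap.
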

It remains to relate the angle $\beta(\para)$ to the Grassmannian 
interpolation error~$C_{\mathrm{I}}$. The angle $\beta(\para) = \beta_{\para}(\specProjR(\para), \specProjRApprox(\para))$ 
is defined with respect to the parameter-dependent inner product 
$a_{\os{}, \para}^{\partial}(\cdot, \cdot)$. 
In contrast, the Grassmannian distance 
$d_M(\specProjR(\para), \specProjRApprox(\para))$ in 
\cref{prop:interpolation_error_grassmann} is measured with respect to the fixed 
reference inner product $a_{\os{}, \para_0}^{\partial}(\cdot, \cdot)$.
The following proposition connects~$\beta(\para)$ to the principal angles 
with respect to the reference inner product.
For $i = 1, \ldots, \nloc{}$ and $\para, \para^{\prime} \in \Yloc$, we denote by
$\theta_{\para^\prime, i}(\para)$ the $i$-th principal angle between 
$\specProjR(\para)$ and $\specProjRApprox(\para)$ with respect to 
$a_{\os{}, \para^\prime}^{\partial}(\cdot, \cdot)$, see \cref{lem:grassmannian_distance_principal_angles}. 
\begin{lemma}
    \label{prop:beta_angle_is_maximal_principle_angle_coefficient_dependent}
    It holds that
    $\beta(\para) = \theta_{\para, \nloc{}}(\para)$.
\end{lemma}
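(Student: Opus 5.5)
We prove the identity directly from the variational characterization of the smallest singular value, working throughout in the Hilbert space $X=(\htraceZR{\os{}},a^{\partial}_{\os{},\para}(\cdot,\cdot))$. Here $\para$ is fixed and both the angle $\beta_{\para}$ and the principal angles $\theta_{\para,i}(\para)$ are taken with respect to the same inner product $a^{\partial}_{\os{},\para}$.

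Write $V=\specProjR(\para)$ and $W=\specProjRApprox(\para)$. Both are $\nloc$-dimensional, since $\exp_{\optSpace_0^{\partial}}$ maps into $\GrR_{\nloc}$. Choose $a^{\partial}_{\os{},\para}$-orthonormal bases $\{v_1,\dots,v_{\nloc}\}$ of $V$ and $\{w_1,\dots,w_{\nloc}\}$ of $W$. Form the matrix $G\in\R^{\nloc\times\nloc}$ with entries $G_{ij}=a^{\partial}_{\os{},\para}(v_i,w_j)$. By definition, the singular values $\sigma_1\ge\dots\ge\sigma_{\nloc}$ of $G$ satisfy $\cos\theta_{\para,i}(\para)=\sigma_i$.

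Next, take $\v=\sum_i c_i v_i\in V$ with coefficient vector $c\in\R^{\nloc}$. Then $\orthProj{W}\v=\sum_j (G^{T}c)_j w_j$. Since $\orthProj{W}$ is an orthogonal projection, $a^{\partial}_{\os{},\para}(\orthProj{W}\v,\v)=\|\orthProj{W}\v\|^2=|G^{T}c|_2^2$. Orthonormality gives $a^{\partial}_{\os{},\para}(\v,\v)=|c|_2^2$. Hence, by \eqref{def:angle_between_subspaces},
\begin{equation*}
\cos^2\beta(\para)=\inf_{c\neq0}\frac{|G^{T}c|_2^2}{|c|_2^2}=\sigma_{\min}(G^{T})^2=\sigma_{\nloc}^2,
\end{equation*}
using the Courant--Fischer characterization for the square matrix $G^{T}$, whose singular values coincide with those of $G$. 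This gives $\cos\beta(\para)=\cos\theta_{\para,\nloc}(\para)$.

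It remains to pass from cosines to angles. Both $\beta(\para)$ and $\theta_{\para,\nloc}(\para)$ lie in $[0,\pi/2]$: $\cos^2\beta\in[0,1]$, and principal angles are taken in $[0,\pi/2]$ because singular values lie in $[0,1]$. Cosine is injective on $[0,\pi/2]$, so the two angles are equal. The only points requiring care are conventions: the inner products in the two definitions must be identical, which holds since both use the index $\para$; the dimensions of $V$ and $W$ must agree so that $G$ is square; and real-valuedness of the spaces must hold, which follows from \cref{prop:spectral_projection_properties}(iv) and from interpolation taking place in the real Grassmannian. None of these should pose a real obstacle.
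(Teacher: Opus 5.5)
Your proof is correct. It differs from the paper's only in how the principal angles are connected to the projection $\orthProj{W}$. The paper invokes the variational characterization \cite[Theorem~4.3]{knyazev2002principal},
$\sin^2\theta_{\para,\nloc}(\para)=\max_{\u\in V,\ \|\u\|=1}\|(I-\orthProj{W})\u\|^2$.
It then uses $\|(I-\orthProj{W})\u\|^2=1-a^{\partial}_{\os{},\para}(\orthProj{W}\u,\u)$ to arrive at $1-\cos^2\beta(\para)$.

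You instead start from the paper's own definition of principal angles as singular values of the Gram matrix $G$. You show directly that $\cos^2\beta(\para)$ is the minimal Rayleigh quotient $\sigma_{\min}(G^{T})^2$, which in effect reproves the special case of that characterization that is needed here.

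Your route is self-contained and stays close to the definition actually stated in the paper. It also makes explicit the $[0,\pi/2]$ convention needed to pass from equality of cosines to equality of angles, which the paper's argument uses implicitly when passing from $\sin^2$ to the angles.

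The paper's route is shorter and phrased in terms of sines. That is the form in which the angle is estimated right afterwards via \cref{prop:principal_angles_linear_isomorphism}, and the same characterization is reused in the Variant~II analysis.
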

\begin{proof}
    Let $\orthProj{\specProjRApprox(\para)}$ denote the orthogonal projection 
    onto $\specProjRApprox(\para)$ with respect to 
    $a_{\os{}, \para}^{\partial}(\cdot, \cdot)$. 
    By the variational characterization of principal angles 
    \cite[Theorem~4.3]{knyazev2002principal}, the largest principal angle satisfies
    \begin{align*}
        \sin^2(\theta_{\para, \nloc{}}(\para))
        &= \max_{\substack{\u\in \specProjR(\para)\\ \|\u\|_{a^{\partial}, \para, \os{}} = 1}} 
        \| (I - \orthProj{\specProjRApprox(\para)})\u \|_{a^{\partial}, \para, \os{}}^2 
        = \max_{\substack{\u\in \specProjR(\para)\\ \|\u\|_{a^{\partial}, \para, \os{}} = 1}} 
        1 - a_{\os{}, \para}^{\partial}(\orthProj{\specProjRApprox(\para)}\u, \u) \\
        &= 1 - \min_{\substack{\u\in \specProjR(\para)\\ \|\u\|_{a^{\partial}, \para, \os{}} = 1}} 
        a_{\os{}, \para}^{\partial}(\orthProj{\specProjRApprox(\para)}\u, \u) 
        = 1 - \cos^2(\beta(\para)),
    \end{align*}
    where the last equality is the definition of $\beta(\para)$.
\end{proof}
\begin{proof}[Proof of \cref{thm:local_approximation_error}]
The first inequality follows from \cref{cor:lipton_sinz_trace_space_coefficient_dependent},
since every $\u \in H_{A(\para),0}(\os{})$ can be written as 
$\u = \AharmEx_{\para}^{0}(\u^\partial)$ for a unique
$\u^\partial \in \htraceZR{\os{}}$, and similarly every 
$\v \in \AharmEx_{\para}^{0}(\specProjRApprox(\para))$ takes the form
$\v = \AharmEx_{\para}^{0}(\v^\partial)$ with
$\v^\partial \in \specProjRApprox(\para)$. For an interior domain, these
functions belong to $S^{\para}_{\nloc}(\om{})$ by
\eqref{eq:equal_local_spaces_AharmEx_zero} and for a boundary domain this follows
from $\AharmEx_{\para}^{0}=\AharmEx_{\para}$. We note that for interior
subdomains, the first inequality follows for all $\u \in H_{A(\para)}(\os{})$, since the constant
functions are added to the local approximation space and the energy norm of 
constant functions vanishes, cf. the proof of \cite[Theorem 5.4]{alber2025optimal}.

It remains to show $(\cos \beta(\para))^{-1} \leq (1 - C_A^2 \sin^2(C_{\mathrm{I}}))^{-1/2}$.
Writing $M = \GrR_{\nloc{}}(\htraceZR{\os{}})$ and using 
~\eqref{eq:grassmannian_distance_principal_angles} together with 
\cref{prop:interpolation_error_grassmann}, we obtain
\begin{equation}
    \label{proof:principal_angle_bound_by_interpolation_error}
    \theta_{\para_0, \nloc{}}(\para)
    \leq \Bigl( \sum_{i=1}^{\nloc{}} \theta_{\para_0, i}(\para)^2 \Bigr)^{1/2} 
    = d_M(\specProjR(\para), \specProjRApprox(\para)) \leq C_{\mathrm{I}}.
\end{equation}
To change from the reference to the parameter-dependent inner product, we note that the
identity operator $\mathrm{Id}: (\htraceZR{\os{}}, a^{\partial}_{\os{}, \para_0}) 
\to (\htraceZR{\os{}}, a^{\partial}_{\os{}, \para})$ is a linear isomorphism with 
condition number $\kappa(\mathrm{Id}) \leq C_A$.
Indeed, for any $\u^\partial \in \htraceZR{\os{}}$,
\begin{align*}
    a_{\os{}, \para}^{\partial}(\u^\partial, \u^\partial)
    &= a_{\os{}, \para}(\AharmEx_{\para}\u^\partial, \AharmEx_{\para}\u^\partial) 
    = \inf \bigl\{ \|\u\|_{a, \para, \os{}}^2 : \u\in H^1(\os{}),\; \trace(\u) = \u^\partial \bigr\}\\
    &\leq \|A_{\para} / A_{\para_0} \|_{L^\infty(\os{})} 
    \inf \bigl\{ \|\u\|_{a, \para_0, \os{}}^2 : \u\in H^1(\os{}),\; \trace(\u) = \u^\partial \bigr\} \\
    &= \|A_{\para} / A_{\para_0} \|_{L^\infty(\os{})} \, 
    a_{\os{}, \para_0}^{\partial}(\u^\partial, \u^\partial).
\end{align*}
By symmetry, the reverse bound holds with $\|A_{\para_0} / A_{\para}\|_{L^\infty(\os{})}$, 
establishing $\kappa(\mathrm{Id}) \leq C_A$.
Applying \cref{prop:principal_angles_linear_isomorphism} and~\eqref{proof:principal_angle_bound_by_interpolation_error} yields
\begin{equation}
    \label{eq:sin_angle_change}
    \sin(\theta_{\para, \nloc{}}(\para)) 
    \leq C_A \sin(\theta_{\para_0, \nloc{}}(\para))
    \leq C_A \sin(C_{\mathrm{I}}).
\end{equation}
By \cref{prop:beta_angle_is_maximal_principle_angle_coefficient_dependent}, 
$\beta(\para) = \theta_{\para, \nloc{}}(\para)$. From~\eqref{eq:sin_angle_change}, 
\begin{equation*}
    \cos(\beta(\para)) 
    = \sqrt{1 - \sin^2(\beta(\para))}
    \geq \sqrt{1 - (C_A \sin(C_{\mathrm{I}}))^2} 
    > 0,
\end{equation*}
where the last inequality uses the assumption $C_A \sin(C_{\mathrm{I}}) < 1$.
Taking reciprocals yields
\begin{equation*}
    (\cos \beta(\para))^{-1} \leq \bigl(1 - C_A^2 \sin^2(C_{\mathrm{I}})\bigr)^{-1/2}. \qedhere
\end{equation*}
\end{proof}

\subsubsection{Variant II}

We now analyze the method mentioned in \cref{rem:practical_method}, which uses
the interpolation $\specProjPuRApprox$ of the original MS-GFEM eigenspaces
$\specProjPuR$ in $\GrR_{\nloc}(H_D^1(\os{}; \R))$ for the local
approximation spaces as analyzed in \cref{prop:interpolation_error_grassmann_practical}.
In the GFEM, we thus use the local approximation spaces
\begin{equation}
    \label{def:local_approximation_spaces_practical}
    S^{\para}_{\nloc}(\om{}) = 
    \begin{cases}
    \specProjPuRApprox(\para)|_{\om{}} + \R,
    &\text{if } \partial\os{}\cap\partial\Omega=\emptyset,\\[6pt]
    \specProjPuRApprox(\para)|_{\om{}},
    &\text{if } \partial\os{}\cap\partial\Omega\neq\emptyset.
    \end{cases}
\end{equation}
In contrast to Variant I, in Variant II there are no $A(\para)$-harmonic extensions and thus no
local problems need to be solved for each parameter $\para$ after interpolation.
\begin{lemma}
    \label{lem:constraint_poincare}
    There is a constant $C_F>0$, independent of $\para\in\Yloc$, such that
    \begin{equation}
        \label{eq:constraint_poincare}
        \|\u\|_{L^2(\os{})}
        \leq C_F\|\nabla\u\|_{L^2(\os{})},
        \qquad
        \u\in H_{A(\para),0}(\os{})\cap H^1(\os{};\R).
    \end{equation}
    Consequently, for all $\u\in H_{A(\para),0}(\os{})\cap H^1(\os{};\R)$
    \begin{equation}
        \label{eq:H1_energy_norm_equivalence}
        \frac{\Amin^{1/2}}{\sqrt{1+C_F^2}}
        \|\u\|_{H^1(\os{})}
        \leq \|\u\|_{a,\para,\os{}}
        \leq \Amax^{1/2}\|\u\|_{H^1(\os{})}.
    \end{equation}
\end{lemma}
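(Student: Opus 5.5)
The plan is to prove a Poincaré-type inequality with a constant that does not depend on $\para$, by treating the two cases in the definition \eqref{def:harmonic_space} of $H_{A(\para),0}(\os{})$ separately. Once \eqref{eq:constraint_poincare} is available, the norm equivalence \eqref{eq:H1_energy_norm_equivalence} follows at once. For real $\u$ uniform ellipticity gives $\Amin\|\nabla\u\|^2_{L^2(\os{})}\le \|\u\|^2_{a,\para,\os{}}\le \Amax\|\nabla \u\|^2_{L^2(\os{})}\le\Amax\|\u\|^2_{H^1(\os{})}$. Combining this with $\|\u\|_{H^1}^2\le(1+C_F^2)\|\nabla\u\|^2_{L^2}$ yields the lower bound.

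\emph{Boundary domain.} Here $\u$ vanishes on $\partial\os{}\cap\partial\Omega$, which has positive surface measure by assumption. The classical Friedrichs inequality on the connected Lipschitz domain $\os{}$ for functions with zero trace on a boundary part of positive measure gives $C_F$. This constant depends only on $\os{}$ and that boundary part, so it is independent of $\para$, and no harmonicity is needed.

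\emph{Interior domain.} Here the constraint is $a_{\om{},\para}(\chi\u,\chi)=0$, which depends on $\para$; this is the main obstacle. I would argue by contradiction together with compactness. Suppose there are $\para_j\in\Yloc$ and real $\u_j\in H_{A(\para_j),0}(\os{})$ with $\|\u_j\|_{L^2(\os{})}=1$ and $\|\nabla\u_j\|_{L^2(\os{})}\to0$. By Rellich, after passing to a subsequence, $\u_j\to c$ in $L^2$ and in fact in $H^1$, where $c$ is a constant with $|c|\,|\os{}|^{1/2}=1$; by compactness of $\Yloc$ we may also assume $\para_j\to\para^*$.

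Next expand the constraint as $0=a_{\om{},\para_j}(\chi\u_j,\chi)=\int_{\om{}} A_{\para_j}(\chi\nabla\u_j+\u_j\nabla\chi)\cdot\nabla\chi$. The term containing $\nabla\u_j$ tends to zero because $A$ is bounded by $\Amax$ and $\chi\in W^{1,\infty}$. The remaining term is bounded below using $A_{\para_j}\ge\Amin$: write $\u_j=c+(\u_j-c)$, so that it equals $c\int A_{\para_j}|\nabla\chi|^2+o(1)$, and this satisfies $\bigl|c\int A_{\para_j}|\nabla\chi|^2\bigr|\ge |c|\,\Amin\|\nabla\chi\|^2_{L^2(\om{})}>0$, since $\chi$ is nonzero and supported in $\overline{\om{}}$. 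This contradicts the constraint. Note that no continuity of $A$ in $\para$ is needed and the limit $\para^*$ is in fact unused; only the uniform bounds $\Amin,\Amax$ enter.

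Alternatively, the same reasoning can be made quantitative. Set $m:=|\os{}|^{-1}\int_{\os{}}\u$. Testing the constraint shows $|m|\,\Amin\|\nabla\chi\|^2\le C\|\nabla\u\|_{L^2}$, with $C$ depending on $\Amax$ and $\|\chi\|_{W^{1,\infty}}$ together with the mean-zero Poincaré constant. Then $\|\u\|_{L^2}\le\|\u-m\|_{L^2}+|m||\os{}|^{1/2}$ gives an explicit $C_F$, depending only on $\Amin$, $\Amax$, $\chi$ and $\os{}$. I would present this explicit version, since it makes the $\para$-independence transparent.
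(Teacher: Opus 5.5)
Your proposal is correct, and the explicit version you plan to present is the paper's own argument. The paper uses the Friedrichs/Poincar\'e inequality for boundary domains. For interior domains it bounds the mean $\overline{\u}$ through the constraint $a_{\om{},\para}(\chi\u,\chi)=0$, the zero-mean Poincar\'e inequality and the product rule, and then concludes with $\|\u\|_{L^2}\le\|\u-\overline\u\|_{L^2}+|\os{}|^{1/2}|\overline\u|$. The only cosmetic difference is that the paper applies Cauchy--Schwarz in the $a_{\om{},\para}$ semi-inner product, which gives the factor $(\Amax/\Amin)^{1/2}$ in $C_F$ instead of your $\Amax/\Amin$; your compactness argument is a valid but non-quantitative alternative.
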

\begin{proof}
    Let the domain be interior and set
    $\overline\u:=|\os{}|^{-1}\int_{\os{}}\u\,\mathrm{d}x$. The constraint
    $a_{\om{},\para}(\chi\u,\chi)=0$ gives
    \begin{equation*}
        |\overline\u|a_{\om{},\para}(\chi,\chi)^{1/2}
        \leq
        \|\chi(\u-\overline\u)\|_{a,\para,\om{}}.
    \end{equation*}
    The Poincar\'{e} inequality for $\u-\overline\u$, the product rule, and
    $a_{\om{},\para}(\chi,\chi)
    \geq\Amin\|\nabla\chi\|_{L^2(\om{})}^2$ imply
    \begin{equation*}
        |\overline\u|
        \leq
        \frac{\Amax^{1/2}
        (\|\chi\|_{L^\infty(\om{})}
        +C_{F,0}\|\nabla\chi\|_{L^\infty(\om{})})}
        {\Amin^{1/2}\|\nabla\chi\|_{L^2(\om{})}}
        \|\nabla\u\|_{L^2(\os{})},
    \end{equation*}
    where $C_{F,0}$ is the Poincar\'{e} constant on functions with zero mean
    on $\os{}$. Hence, in this case,~\eqref{eq:constraint_poincare} follows with
    \begin{equation}
        \label{eq:def_C_F}
        C_F
        :=C_{F,0}+|\os{}|^{1/2}
        \frac{\Amax^{1/2}
        (\|\chi\|_{L^\infty(\om{})}
        +C_{F,0}\|\nabla\chi\|_{L^\infty(\om{})})}
        {\Amin^{1/2}\|\nabla\chi\|_{L^2(\om{})}}.
    \end{equation}
    using $\|\u\|_{L^2(\os{})}\leq\|\u-\overline\u\|_{L^2(\os{})} + |\os{}|^{1/2}|\overline\u|$. 
    For a boundary domain,~\eqref{eq:constraint_poincare} follows from the
    Poincar\'{e} inequality on $\Hdir{\os{}}$, with $C_F$ denoting the
    corresponding Poincar\'{e} constant. The norm equivalence
    \eqref{eq:H1_energy_norm_equivalence} now follows from
    \eqref{eq:constraint_poincare} and uniform ellipticity.
\end{proof}

    The interpolated eigenspaces
    $\specProjPuRApprox(\para)$ are not contained in $H_{A(\para),0}(\os{})$,
    but this is required for
    \cref{lem:range_approximation_error_angle}. We thus orthogonally project the
    interpolated eigenspaces onto $A(\para)$-harmonic functions, apply the
    result for the projected space, and then bound the error introduced by the
    projection. Let $\orthProj{A(\para)}$ denote the $H^1(\os{})$-orthogonal
    projection from $\Hdir{\os{}}$ onto $H_{A(\para),0}(\os{})\cap
    H^1(\os{};\R)$. We fix
    $\para$ and let $W := \specProjPuRApprox(\para)$ and $W^H := \orthProj{A(\para)}
    W$. The angle $\beta^H_{\para}$ is defined by
    \begin{equation*}
    \cos^2(\beta^H_{\para}) 
    := \inf\limits_{\v \in \specProjPuR(\para)} 
    \frac{a_{\os{}, \para}(\orthProj{W^H}^a\v, \v)}{a_{\os{}, \para}(\v, \v)},
    \end{equation*}
    where $\orthProj{W^H}^a$ is the orthogonal projection from
    $\Hdir{\os{}}$ to $W^H$ with respect to
    $a_{\os{}, \para}$. Moreover, let
    \begin{equation}
        \label{eq:def_C_1_A}
        C_{1,A}
        :=\sqrt{\Amax/\Amin}\sqrt{1+C_F^2},
        \qquad
        C_{\chi, A}
        :=C_{1,A}\left(1+\|\nabla\chi\|_{L^\infty(\om{})}\right).
    \end{equation}
\begin{theorem}[Variant II]
    \label{thm:local_approximation_error_practical}
    Let the assumptions of \cref{prop:interpolation_error_grassmann_practical} be satisfied.
    Let $\para\in\Yloc$ and
    suppose $s = C_{1,A} \sin(C_{\mathrm{II}}) < 1$ and $C_{\mathrm{II}} < \pi/2$,
    where $C_{\mathrm{II}}$ is the interpolation error bound from
    \cref{prop:interpolation_error_grassmann_practical}.
    Then, with $S_{\nloc}^{\para}(\om{})$ defined in~\eqref{def:local_approximation_spaces_practical},
    \begin{align*}
    \inf\limits_{\v\in S_{\nloc}^{\para}(\om{})}
    \| \chi(\u - \v) \|_{a, \para, \om{}}
    &\leq (\cos(\beta^H_{\para}))^{-1}
    \left(\lambda_{\nloc{}+1}^{1/2}(\para)
    + C_{\chi, A} \tan(\theta_{\para})\right) \| \u \|_{a, \para, \os{}}\\
    &\leq
    \Bigl[
    \lambda_{\nloc{}+1}^{1/2}(\para)
    \bigl(1 - s^2\bigr)^{-1/2}
    + C_{\chi, A}\, \frac{s}{1 - s^2}
    \Bigr] \| \u \|_{a, \para, \os{}}
    \end{align*}
    holds for all $\u\in H_{A(\para)}(\os{})\cap H^1(\os{};\R)$ with $ \u|_{\partial\os{}\cap\partial\Omega} = 0$.
    Here $\theta_{\para}$ denotes
    the largest principal angle between $\specProjPuR(\para)$ and $W$ with
    respect to the $H^1(\os{})$ inner product.
\end{theorem}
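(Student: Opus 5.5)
The plan is to reduce Variant~II to the situation of Variant~I by passing through the projected space $W^H=\orthProj{A(\para)}W$. This space consists of $A(\para)$-harmonic functions in $H_{A(\para),0}(\os{})\cap H^1(\os{};\R)$, so \cref{lem:range_approximation_error_angle} applies to it. The proof then bounds separately the error from replacing $W$ by $W^H$.

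First, fix $\u$ as in the statement. For interior domains the constants in $S^{\para}_{\nloc}(\om{})$ allow us to assume $\u\in H_{A(\para),0}(\os{})$, exactly as in the proof of \cref{thm:local_approximation_error}. Via \cref{lem:AharmEx_zero_isometry}, we apply \cref{lem:range_approximation_error_angle} with $X=(H_{A(\para),0}(\os{})\cap H^1(\os{};\R),a_{\os{},\para})$, $L=\puOrig_{\para}$, $V=\specProjPuR(\para)$ and the space $W^H$. This gives $\w^H\in W^H$ with $\|\w^H\|_{a,\para,\os{}}\le(\cos\beta^H_\para)^{-1}\|\u\|_{a,\para,\os{}}$ and $\|\chi(\u-\w^H)\|_{a,\para,\om{}}\le\lambda_{\nloc+1}^{1/2}(\cos\beta^H_\para)^{-1}\|\u\|_{a,\para,\os{}}$.

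Next, write $\w^H=\orthProj{A(\para)}\w$ with $\w\in W$, and take $\v=\w|_{\om{}}$. The remaining term is $\|\chi(\w^H-\w)\|_{a,\para,\om{}}\le \Amax^{1/2}(1+\|\nabla\chi\|_{L^\infty})\|(I-\orthProj{A(\para)})\w\|_{H^1(\os{})}$, using the product rule. To bound $(I-\orthProj{A(\para)})\w$ relative to $\w^H$, I would use that $\specProjPuR(\para)\subset H_{A(\para),0}$. Then $\|(I-\orthProj{A(\para)})\w\|_{H^1}\le\|(I-\orthProj{\specProjPuR(\para)})\w\|_{H^1}\le\sin\theta'\|\w\|_{H^1}$, where $\theta'$ is the largest principal angle from $W$ to $\specProjPuR(\para)$. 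This equals $\theta_\para$, since the two spaces have equal dimension. Since $\|\w^H\|^2=\|\w\|^2-\|(I-\orthProj{A})\w\|^2\ge\cos^2\theta_\para\|\w\|^2$ in $H^1$, we get $\|(I-\orthProj{A})\w\|_{H^1}\le\tan\theta_\para\|\w^H\|_{H^1}$. Converting $\|\w^H\|_{H^1}$ to the energy norm by \eqref{eq:H1_energy_norm_equivalence} produces the factor $\sqrt{1+C_F^2}/\Amin^{1/2}$. Multiplying by $\Amax^{1/2}(1+\|\nabla\chi\|_\infty)$ gives $C_{\chi,A}\tan\theta_\para(\cos\beta^H_\para)^{-1}\|\u\|_{a,\para,\os{}}$. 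Combined with the first step, this yields the first inequality.

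For the second inequality, \cref{lem:grassmannian_distance_principal_angles} and \cref{prop:interpolation_error_grassmann_practical} give $\theta_\para\le C_{\mathrm{II}}<\pi/2$.
\begin{itemize}
\item To bound $\beta^H_\para$, I would first pass from $W$ to $W^H$ in $H^1$. Projecting $W$ onto the harmonic space does not increase its angle to $V\subset H_{A(\para),0}$, since $\orthProj{V}=\orthProj{V}\orthProj{A(\para)}$ and $\|\orthProj{A}\w\|\le\|\w\|$. Hence the $H^1$ principal angle between $V$ and $W^H$ is at most $\theta_\para$.
\item Then I would switch from the $H^1$ inner product to $a_{\os{},\para}$ on the harmonic space using \cref{prop:principal_angles_linear_isomorphism}. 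The condition number is at most $C_{1,A}$ by \eqref{eq:H1_energy_norm_equivalence}. This gives $\sin\beta^H_\para\le C_{1,A}\sin C_{\mathrm{II}}=s$, and the identification of $\beta^H$ with the largest angle is as in \cref{prop:beta_angle_is_maximal_principle_angle_coefficient_dependent}. Hence $(\cos\beta^H_\para)^{-1}\le(1-s^2)^{-1/2}$.
\item Finally, $\tan\theta_\para\le \sin C_{\mathrm{II}}/\cos C_{\mathrm{II}}$. Since $C_{1,A}\ge1$, we have $\sin C_{\mathrm{II}}\le s$ and $\cos C_{\mathrm{II}}\ge(1-s^2)^{1/2}$. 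Together with the previous bound this gives $(\cos\beta^H)^{-1}\tan\theta_\para\le s/(1-s^2)$.
\end{itemize}

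The main obstacle I expect is the bookkeeping of the two inner products. The interpolation and $\theta_\para$ live in $H^1$, while \cref{lem:range_approximation_error_angle} needs $a_{\os{},\para}$ on the harmonic space. The point needing care is the monotonicity of the angle under $\orthProj{A(\para)}$, together with $\dim W^H=\nloc$. The latter follows because $\cos\theta_\para>0$ makes $\orthProj{A}|_W$ injective, by the inequality $\|\w^H\|\ge\cos\theta_\para\|\w\|$.
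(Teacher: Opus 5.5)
Your proposal is correct and follows the paper's proof essentially step for step. You project $W$ onto the $A(\para)$-harmonic space, apply \cref{lem:range_approximation_error_angle} to $W^H$, bound $\|\chi(\w^H-\w)\|_{a,\para,\om{}}$ through $\tan\theta_{\para}$ and the norm equivalence, and pass from $\theta^H_{\para}\le\theta_{\para}$ to $\sin\beta^H_{\para}\le C_{1,A}\sin\theta_{\para}$ via \cref{prop:principal_angles_linear_isomorphism}. The differences are minor. Your proof of $\theta^H_{\para}\le\theta_{\para}$ uses $\orthProj{\specProjPuR(\para)}=\orthProj{\specProjPuR(\para)}\orthProj{A(\para)}$ from the $W^H$ side, a slightly slicker variant of the paper's max--min chain. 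You should also establish $\cos\beta^H_{\para}\ge(1-s^2)^{1/2}>0$ before invoking the lemma rather than afterwards, since positivity is one of its hypotheses.
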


\begin{proof}
    For interior subdomains, it suffices to prove the bound for $\u \in
    H_{A(\para),0}(\os{})$. Indeed, any $\u \in H_{A(\para)}(\os{})$ can be
    written as $\u = \u_0 + \mathcal{M}_{\para}(\u)$ with $\u_0 := \u -
    \mathcal{M}_{\para}(\u) \in H_{A(\para),0}(\os{})$, the constants are
    contained in $S^{\para}_{\nloc}(\om{})$, and the energy norm of constants
    vanishes. Let $\u \in H_{A(\para), 0}(\os{})$.
    By construction, we
    have $W^H\subset H_{A(\para), 0}(\os{})$. Thus, if $\cos{\beta^H_{\para}} >
    0$, we can use \cref{lem:range_approximation_error_angle} to infer the
    existence of $\w^H\in W^H$ such that
    \begin{equation}
        \label{eq:practical_method_local_approximation_error_projection}
        \| \chi(\u - \w^H) \|_{a, \para, \om{}}
        \leq
        \lambda_{\nloc{}+1}^{1/2}(\para)
        (\cos(\beta^H_{\para}))^{-1}
         \| \u \|_{a, \para, \os{}}
     \text{  and  } 
        \|\w^H\|_{a, \para, \os{}} \leq (\cos(\beta^H_{\para}))^{-1} \|\u\|_{a, \para, \os{}}. 
    \end{equation}
    Thus we now show that $\cos{\beta^H_{\para}} > 0$. Then
    \cref{lem:grassmannian_distance_principal_angles,prop:interpolation_error_grassmann_practical}
    give
    \begin{equation*}
        \sin(\theta_{\para}) \leq \sin(C_{\mathrm{II}}) \leq s.
    \end{equation*}
    We use that $\specProjPuR(\para) \subset H_{A(\para), 0}(\os{})$ and \cite[Theorem 4.3]{knyazev2002principal} 
    to infer
    \begin{equation}
        \label{eq:norm_w_minus_orth_proj}
        \|(I - \orthProj{A(\para)}) \w \|_{H^1(\os{})}
        \leq \|(I - \orthProj{\specProjPuR(\para)}) \w \|_{H^1(\os{})}
        \leq \sin(\theta_{\para}) \|\w \|_{H^1(\os{})},
        \quad \forall \w\in W.  
    \end{equation}
    Using orthogonality, we conclude 
    \begin{equation}
        \label{eq:lower_bound_orth_proj}
        \|\orthProj{A(\para)} \w \|_{H^1(\os{})}
        = \sqrt{\|\w \|^2_{H^1(\os{})} - \|(I - \orthProj{A(\para)}) \w \|^2_{H^1(\os{})}}
        \geq \cos(\theta_{\para})\|\w \|_{H^1(\os{})},
        \quad \forall \w\in W.  
    \end{equation}
    This shows that $\orthProj{A(\para)}|_{W}$ is injective and thus
    $\dim W^H=\dim W = \nloc$. Let $\theta_{\para}^H$ be the largest principal
    angle between $\specProjPuR(\para)$ and $W^H$ with respect to the
    $H^1(\os{})$ inner product. We have
    \begin{align*}
        \sin^2(\theta_{\para}^H)
        &= \max_{\substack{\v\in \specProjPuR(\para)\\ \|\v\|_{H^1(\os{})} = 1}}
        \| (I - \orthProj{W^H})\v \|_{H^1(\os{})}^2
        = \max_{\substack{\v\in \specProjPuR(\para)\\ \|\v\|_{H^1(\os{})} = 1}}
        \min_{\substack{\w^H\in W^H}} \| \v - \w^H \|_{H^1(\os{})}^2\\
        &= \max_{\substack{\v\in \specProjPuR(\para)\\ \|\v\|_{H^1(\os{})} = 1}}
        \min_{\substack{\w\in W}} \| \v - \orthProj{A(\para)}(\w) \|_{H^1(\os{})}^2
        \leq \max_{\substack{\v\in \specProjPuR(\para)\\ \|\v\|_{H^1(\os{})} = 1}}
        \| \orthProj{A(\para)}(\v) - \orthProj{A(\para)}(\orthProj{W}(\v)) \|_{H^1(\os{})}^2\\
        &\leq \max_{\substack{\v\in \specProjPuR(\para)\\ \|\v\|_{H^1(\os{})} = 1}}
        \| \v - \orthProj{W}(\v) \|_{H^1(\os{})}^2
        = \sin^2(\theta_{\para}) \leq \sin^2(C_{\mathrm{II}}),
    \end{align*}
    the last equality follows from \cite[Theorem 4.3]{knyazev2002principal}.
    By the same variational characterization, $\beta^H_{\para}$ is the
    largest principal angle between $\specProjPuR(\para)$ and $W^H$ with
    respect to the energy inner product. Hence, by
    \cref{prop:principal_angles_linear_isomorphism} and
    \eqref{eq:H1_energy_norm_equivalence}, whose two inequalities give a
    condition number bounded by $C_{1,A}$,
    \begin{equation*}
        \sin(\beta^H_{\para}) \leq C_{1,A}\sin(\theta_{\para}^H)
        \leq C_{1,A}\sin(C_{\mathrm{II}}) = s.
    \end{equation*}
    Thus, we have
    \begin{equation}
        \label{eq:lower_bound_cos_beta}
        \cos(\beta^H_{\para}) \geq \sqrt{1 - s^2} > 0
    \end{equation}
    and there exists a $\w^H \in W^H$ that satisfies~\eqref{eq:practical_method_local_approximation_error_projection}. 
    Let $\w \in W$ be such that $\w^H = \orthProj{A(\para)} \w$. Then, by the triangle inequality 
    \begin{equation}
        \label{eq:triangle_inequality_practical_method}
        \| \chi(\u - \w) \|_{a, \para, \om{}}
        \leq \| \chi(\u - \w^H) \|_{a, \para, \om{}}
        + \| \chi(\w^H - \w) \|_{a, \para, \om{}}
    \end{equation}
    and we only have to bound the second term. By the product rule,
   ~\eqref{eq:norm_w_minus_orth_proj},~\eqref{eq:lower_bound_orth_proj},
   ~\eqref{eq:H1_energy_norm_equivalence}, and
   ~\eqref{eq:practical_method_local_approximation_error_projection},
    we have 
    \begin{align*}
        \| \chi(\w^H - \w) \|_{a, \para, \om{}}
        &\leq \Amax^{1/2}
        \left(1 + \|\nabla\chi\|_{L^\infty(\om{})}\right)
        \|(I - \orthProj{A(\para)})\w\|_{H^1(\os{})}\\
        &\leq \Amax^{1/2}
        \left(1 + \|\nabla\chi\|_{L^\infty(\om{})}\right)
        \sin(\theta_{\para}) \|\w\|_{H^1(\os{})}\\
        &\leq \Amax^{1/2}
        \left(1 + \|\nabla\chi\|_{L^\infty(\om{})}\right)
        \tan(\theta_{\para}) \|\w^H\|_{H^1(\os{})}\\
        &\leq C_{\chi, A} \tan(\theta_{\para}) \|\w^H\|_{a,\para,\os{}}\\
        &\leq C_{\chi, A} \tan(\theta_{\para})
        (\cos(\beta^H_{\para}))^{-1} \|\u\|_{a,\para,\os{}}\\
        &\leq C_{\chi, A} \frac{s}{1-s^2} \|\u\|_{a,\para,\os{}}.
    \end{align*}
    Inserting this and~\eqref{eq:practical_method_local_approximation_error_projection} 
    into~\eqref{eq:triangle_inequality_practical_method} and using~\eqref{eq:lower_bound_cos_beta} yields the desired result.
\end{proof}

\begin{remark}
    The theoretical error bound for Variant I of the method in
    \cref{thm:local_approximation_error} is more robust than the bound for
    Variant II in \cref{thm:local_approximation_error_practical} in the
    following sense. Assume we have a sequence of integers $\nloc_k \to \infty$
    such that \cref{thm:local_approximation_error} and
    \cref{thm:local_approximation_error_practical} hold for each $\nloc_k$ with
    constants $C_{\mathrm{I},k}$ and $C_{\mathrm{II},k}$, and set
    $s_{\mathrm{I},k} = C_A \sin(C_{\mathrm{I},k})$ and $s_{\mathrm{II},k} =
    C_{1,A} \sin(C_{\mathrm{II},k})$. For Variant I, let the interpolation be
    chosen such that $s_{\mathrm{I},k} \leq 1/2$ for all $k$. Then, the bound in
    \cref{thm:local_approximation_error} is 
    \begin{align}
        \label{eq:local_approximation_error_variant_I_robustness}
    \inf\limits_{\v\in S_{\nloc_k}^{\para}(\om{})}
    \| \chi(\u - \v) \|_{a, \para, \om{}}
    \leq  
        \lambda_{\nloc_k+1}^{1/2}(\para) 
        2/\sqrt{3} \| \u \|_{a, \para, \os{}},
    \end{align}
    that is, for fixed bounded interpolation error, the approximation error is
    bounded by the usual approximation error of the optimal eigenspace times a
    constant independent of the interpolation error. In contrast, for Variant
    II, the additional term $C_{\chi, A}\, s_{\mathrm{II},k} / (1 -
    s_{\mathrm{II},k}^2)$ in the bound of
    \cref{thm:local_approximation_error_practical} is not multiplied by the
    eigenvalue $\lambda_{\nloc_k+1}^{1/2}(\para)$ and thus this bound does not
    yield an estimate
    like~\eqref{eq:local_approximation_error_variant_I_robustness}. To achieve a
    bound for Variant II
    like~\eqref{eq:local_approximation_error_variant_I_robustness}, one would
    have to choose the interpolation error $s_{\mathrm{II},k}$ such that it
    balances the decay of the eigenvalue $\lambda_{\nloc_k+1}^{1/2}(\para)$,
    that is $s_{\mathrm{II},k} \sim \lambda_{\nloc_k+1}^{1/2}(\para)\lesssim
    \exp(-c \nloc_k^{1/\spatDim})$ and hence $s_{\mathrm{II},k}$ would have to
    decay very rapidly to zero as $k \to \infty$.
\end{remark}

\section{Numerical experiments}
\label{sec:numerical_experiments}

In this section, we present numerical experiments to illustrate the performance
of the proposed Grassmannian interpolation method for constructing local
approximation spaces. In all numerical experiments, we consider the spatial
domain $\Omega = [0,1]^2$ with a uniform square mesh with mesh size $h$. We use
bilinear finite elements and partition $\Omega$ into $\numDomPerDir$ subdomains
in each direction for the MS-GFEM. The overlapping and oversampling regions
$\omi$ and $\osi$ are constructed by extending the subdomains by a fixed number of
fine element layers. We construct the sparse-grid interpolant using the Sparse
Grids Matlab package \cite{PiazzolaTamelliniSGK:24}, employing Clenshaw--Curtis
points in $[-1,1]$ as interpolation nodes in all experiments. The test nodes
are specified separately for each experiment below. Note that for the theory, the
Grassmannians for Variant I and Variant II used fixed reference inner products,
$a^{\partial}_{\para_0, \os{}}$ and the $H^1(\os{})$ inner product,
respectively. However, in the numerical experiments, we use the $l^2$ inner
product of nodal coefficient vectors of the boundary trace vectors and the full
local finite-element vectors for Variant I and Variant II, respectively, to form
the Grassmannians. Coefficients are treated as piecewise constant on the fine
mesh. 

To measure the local quality of the interpolated local approximation spaces for
Variant I, we use~\eqref{def:angle_between_subspaces} to introduce the angles
$\beta_k = \beta(\optSpace_{k, \R}^\partial(\para), \specProjRApprox(\para))$
for $k=1,\ldots,\nloc$, measuring how well the space $\optSpace_{k,
\R}^\partial(\para)$, spanned by the first $k$ eigenfunctions, is approximated
by the interpolated local approximation space $\specProjRApprox(\para)$. Thus, a
small $\beta_k$ indicates good approximation of the first $k$ eigenfunctions.
For $k = \nloc$, this is the quantity that appears in the local approximation
error bound for the GFEM solution with interpolated local approximation spaces,
see \cref{thm:local_approximation_error}. For the Variant II, we consider the
relevant angles $\beta^H$ and $\theta_{\para}$ appearing in the error bound of 
\cref{thm:local_approximation_error_practical}

To measure the global quality of the interpolated local approximation spaces, we
construct them on each subdomain and compute the corresponding GFEM solution
$\widehat{\u}^{\mathrm{G}}(\para)$. Note that we construct the particular
functions and solve the global system with a sparse direct solver, to isolate
the error due to the local approximation spaces. This means that the particular
functions are recomputed for each test parameter. The GFEM solution with the
optimal local approximation spaces is denoted by $\u^{\mathrm{G}}(\para)$ and
the reference solution computed on the fine mesh by $\u^{\mathrm{e}}(\para)$.
The relative energy error for $\u(\para)\in\{ \u^{\mathrm{G}}(\para),
\widehat{\u}^{\mathrm{G}}(\para) \}$ is given by
\begin{equation*}
    \mathbf{err} :=
        \frac{\|\u(\para) - \u^{\mathrm{e}}(\para)\|_{a, \para}}{\|\u^{\mathrm{e}}(\para)\|_{a, \para}}.
\end{equation*}
For all experiments the source term is chosen as 
$f(x) = 10^3 \exp(-10((x_1-0.15)^2+(x_2-0.55)^2))$ 
and boundary conditions are homogeneous Dirichlet on the left and right boundaries,
and homogeneous Neumann on the top and bottom boundaries.
\subsection{Eigenvalues and Gaps}
The goal of this section is to confirm the theoretical results on the quality
of the interpolated local approximation spaces.
To this end, we study how the eigenvalues and their gaps vary with the parameter, first
restricting to a single parameter ($\dglob = 1$) with $z = z_1 \in [-1,1]$.

\graphicspath{{figures/}}

\subsubsection{Experiments for one parameter}
In this experiment, we only consider Variant I. We consider a $2\times 2$
checkerboard coefficient with one parameter $\para \in [-1,1]$ that controls the
contrast between the two diagonal pairs of quadrants.
The coefficient is given by
$$
A(x,\para) = A_0(x) + \para A_1(x),
$$
where $A_0(x) = 1$ and
$$
A_1(x) = \begin{cases} \phantom{-}c & \text{if } (x_1 - 0.5)(x_2 - 0.5) > 0, \\ -c & \text{if } (x_1 - 0.5)(x_2 - 0.5) \leq 0. \end{cases}
$$
where $c=199/201$ is chosen such that the contrast $\Amax/\Amin = 200$. The
meshsize is chosen as $h=1/90$. Here we use $\numDomPerDir = 3$ subdomains in
each direction, so that the middle subdomain contains the cross-point of the
checkerboard. Both the oversampling layers and the overlap consist of $2$ fine
element layers. The basepoint for the Grassmann interpolation is chosen as
$\specProjR(-1)$. For interpolation, we use $33$ Clenshaw--Curtis points, while testing uses
a higher-level Clenshaw--Curtis grid. 

This problem poses two challenges. First, at high contrast, the
operator-harmonic space contains functions with very low spatial regularity. For
a coefficient contrast $R$, the checkerboard coefficient admits Kellogg's
singular function \cite{kellogg1972higher} in polar coordinates $(r,\vartheta)$
centered at the cross-point,
$\u_{\mathrm{sing}}(r,\vartheta)=r^\alpha\Phi(\vartheta)$, where
$\alpha=(4/\pi)\arctan(R^{-1/2})$ and $\Phi$ is given in \cite[Eq.~(4.7),
Example~1]{petzoldt2001regularity}. At $\para=1$, the contrast attains its
maximal value $R=\Amax/\Amin=200$, and $\alpha\approx0.09$.
The singular function $\u_{\mathrm{sing}}$ belongs to the operator-harmonic
space $H_{A(1)}(\os{})$, where $\os{}$ is the middle subdomain's oversampling
domain containing the cross-point. Moreover,
$\u_{\mathrm{sing}}\in H^{1+s}(\os{})$ only for $0\le s<\alpha$, while
$\u_{\mathrm{sing}}\notin H^{1+\alpha}(\os{})$; see \cite[p.~436 and
Lemma~3.1]{petzoldt2001regularity}.

Second, when the parameter moves from $-1$ to $1$, the
low-permeability and high-permeability regions switch, causing the
eigenvalues to cross, as shown in \cref{fig:crosspoint1d}~(top left).

We compute the angles numerically for the middle subdomain with $\nloc=7$ and
$\nloc=5$, shown in \cref{fig:crosspoint1d}~(top and bottom, respectively). We
mainly consider the angle for the largest index, $\beta_7$ for $\nloc=7$ and
$\beta_5$ for $\nloc=5$, while the smaller angles are also displayed in grey.
The spectrum over the full parameter range is shown in
\cref{fig:crosspoint1d}~(top left), and the resulting global energy error
between interpolated and optimal local approximation spaces in
\cref{fig:crosspoint1d}~(bottom left). There is a uniform spectral gap between
the $7$th and $8$th eigenvalues across all sampled parameters. Consequently, all
$\beta_k$ for $k \leq \nloc$ remain smaller than $10^{-5}$. However, when $\nloc
= 5$, there is no gap at $\para=0$ between the $5$th and $6$th eigenvalues,
causing several $\beta_k$ to be large, even the smaller indices $k < 5$ exhibit
visible oscillations $>0.1$. For the computation of the MS-GFEM solution with
interpolated local approximation spaces, we construct for each subdomain a
separate interpolated local approximation space of dimension $6$ and $8$
corresponding to the choice $\nloc = 5$ and $\nloc = 7$ on the interior domain,
respectively. The resulting global energy errors are shown in
\cref{fig:crosspoint1d}~(bottom left). Note that the global energy error is
significantly larger for $\nloc = 5$ than for $\nloc = 7$, confirming the
theoretical results on the local approximation error. The curve using the opimal
spaces is visually the same as for the interpolated spaces for $\nloc = 7$, but
not for $\nloc = 5$.

\begin{figure}[t]
    \includegraphics[width=0.45\textwidth]{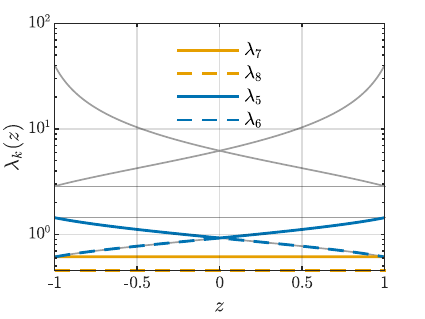}
    \quad
    \includegraphics[width=0.45\textwidth]{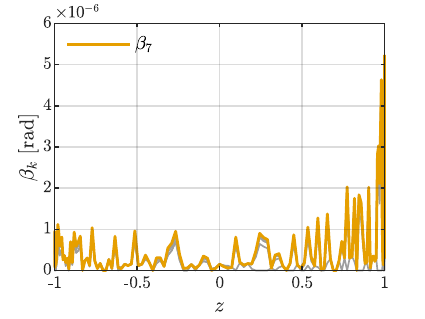}
    \\
    \includegraphics[width=0.45\textwidth]{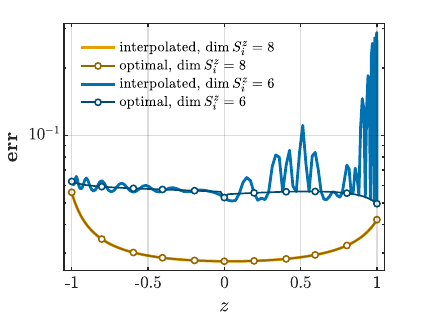}
    \quad
    \includegraphics[width=0.45\textwidth]{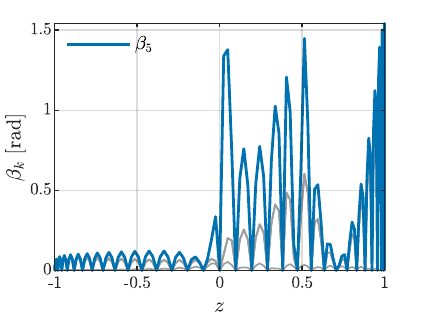}
    \caption{
        Local and global quality of MS-GFEM spaces with interpolated local approximation
        spaces of Variant I for the one-dimensional cross-point problem.
        Top left: spectrum over the parameter range for the middle subdomain.
        Top right: $\beta_7$ angles for $\nloc = 7$ (uniform gap between eigenvalues $7$ and $8$).
        Bottom right: $\beta_5$ angles for $\nloc = 5$ (no uniform gap).
        Smaller angles are shown in grey.
        Bottom left: relative energy error with interpolated MS-GFEM spaces
        and optimal spaces.
    }
    \label{fig:crosspoint1d}
\end{figure}

\subsubsection{Experiments for two parameters}

In this experiment, we investigate Variant II and again consider a checkerboard
coefficient, but now with two parameters $\para = (z_1, z_2) \in [-1,1]^2$
controlling the coefficient in the top right and bottom left quadrants,
respectively. We choose again $h = 1/90$, $\numDomPerDir = 3$,
and both the oversampling layers and the overlap consist of $2$ fine element
layers. The coefficient is given by
\begin{equation*}
    A(x, \para) = A_0(x) + z_1 A_1(x) + z_2 A_2(x),
\end{equation*}
where
\begin{equation*}
    A_0(x) = 1, \quad
    A_1(x) = c \chi_{\{x_1>0.5,\; x_2>0.5\}}(x),\quad
    A_2(x) = c \chi_{\{x_1 < 0.5,\; x_2 < 0.5\}}(x),
\end{equation*}
and $c>0$ is chosen such that the contrast satisfies $\Amax/\Amin = 10000$. We
choose the dimension of the local spaces to be $56$,
for which the MS-GFEM error is very small, so that the observed convergence
reflects only the interpolation error. In \cref{fig:crosspoint2d} (left), we
show the distribution of the relative energy error as a function of the number of
interpolation points. In \cref{fig:crosspoint2d} (right), we show the
corresponding distributions of the Variant-II angles $\beta^H_{\para}$ and
$\theta_{\para}$ for the interior subdomain. At every interpolation level, the local
and global quantities are evaluated on the same common set of off-grid test
parameters, obtained from a sparse grid constructed with equidistant
one-dimensional nodes. Both distributions shift towards smaller values as the number of
interpolation points increases, which is consistent with the theoretical results on the local
approximation error.

\begin{figure}[t]
    \includegraphics[width=0.45\textwidth]{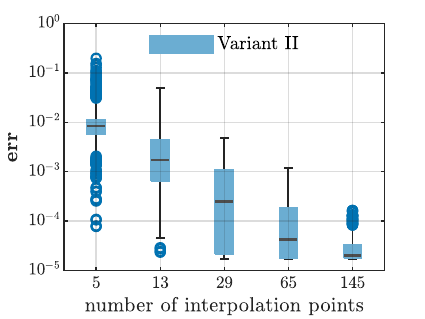}
    \quad
    \includegraphics[width=0.45\textwidth]{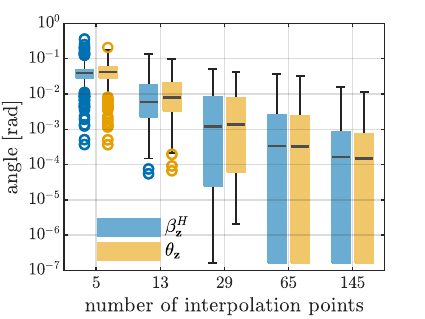}
    \caption{Two-parameter cross-point problem. Left: relative energy error.
    Right: Variant-II angles $\beta^H_{\para}$ and $\theta_{\para}$ for the
    interior subdomain. These statistics are computed after
    the $\log_{10}$ transformation displayed on the vertical axes.}
    \label{fig:crosspoint2d}
\end{figure}

\subsection{Affine case with many parameters}

In this subsection, we use triangular Leja points for testing and Variant II
unless otherwise stated; the final global error experiment compares
Variants I and II.

We consider an $m\times m$ checkerboard partition of $\Omega$, with $m$ even. Let
\begin{equation*}
    \Omega_{ij} := \left[\frac{i-1}{m}, \frac{i}{m}\right] \times \left[\frac{j-1}{m}, \frac{j}{m}\right],
    \qquad i,j\in\{1,\dots,m\},
\end{equation*}
and define the ``active'' cells by
\begin{equation*}
    \mathcal{K}_m := \{ \Omega_{ij} : i+j \text{ is even} \}.
\end{equation*}
Hence, the problem has $m^2/2$ parameters. In the experiments below $m=4$, and choose a fixed enumeration $\mathcal{K}_4 = \{Q_1,\dots,Q_8\}$ of
the eight ``active'' cells. The coefficient is then given by
\begin{equation*}
    A(x,\para) = 1 + c \sum_{j=1}^8 z_j \chi_{Q_j}(x), \qquad
    \para = (z_1,\dots,z_8) \in [-1,1]^8.
\end{equation*}
The coefficient in the remaining cells is fixed to the value $1$. The constant $c>0$
is chosen such that $\Amax/\Amin \approx 20000$.
\Cref{fig:affine_checkerboard_coeff} shows a sample coefficient field for the
case $m=4$. We use $h=1/100$ and a $5\times 5$ subdomain decomposition. The
overlap consists of two layers of fine elements, and the oversampling extension
comprises five layers. With this choice, each local interpolation problem
depends on at most two parameters, although the global parameter space is
eight-dimensional. In MS-GFEM we choose $16$-dimensional local approximation spaces for all subdomains. On subdomains 
not touching the global Dirichlet boundary, these spaces consist of the constant 
function and the first $15$ nonconstant eigenfunctions of the local eigenproblem.
Throughout this subsection, $n$ and $\widehat n$ denote the numbers of
nonconstant eigenfunctions retained in the optimal and interpolated local
spaces, respectively. We determine the dimensions of the interpolated approximation spaces based on the local
spectral gaps. To quantify these gaps numerically, we compute on each subdomain,
\[
    \delta_k(\para) = \frac{\lambda_k(\para) - \lambda_{k+1}(\para)}{\lambda_k(\para)},
    \qquad k=1,\dots,n,
\]
and select the number of nonconstant eigenfunctions for interpolation as
\[
\widehat{n} = \max\{k\in\{1,\dots,n\}: \delta_k(\para) \geq \tau \text{ for all } \para \in \mathcal{Z}_{\mathrm{train}}\},
\]
where $\mathcal{Z}_{\mathrm{train}}$ is a set of interpolation nodes in the parameter domain $[-1,1]^d$. The threshold is empirically set to $\tau = 0.08$ for this experiment. This choice of $\tau$ is not derived from a rigorous theoretical criterion. Establishing a robust general rule for its selection is left for future work.

\begin{figure}[t]
    \includegraphics[trim=0 3bp 0 0,clip,width=0.48\textwidth]{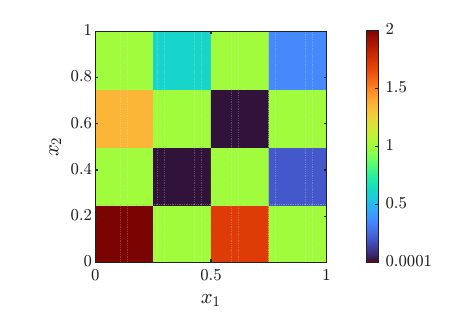}
    \caption{Sample coefficient field for the affine checkerboard problem with
    $m=4$, corresponding to $m^2/2=8$ parameters. The parameter-dependent
    cells $Q_j$ form a checkerboard pattern, while the remaining cells stay
    fixed.}
    \label{fig:affine_checkerboard_coeff}
\end{figure}

Although each local interpolation problem depends on at most two parameters, the spectral behavior can still be very different because the local coefficient geometry differs. We illustrate this by considering the interior subdomains $7$ and $13$, whose local coefficient fields are shown in \Cref{fig:affine_checkerboard_subdomains}. Both subdomains depend on exactly two active parameters. For subdomain $13$, the spectrum exhibits a visible gap between the $15$th and $16$th nonconstant eigenvalues. 
In \cref{fig:affine_checkerboard_gap13}, the values of $\delta_{15}$ shown as a
colormap are uniformly bounded away from zero over the full parameter range, which
indicates a uniform gap between these two eigenvalues. Therefore, in subdomain $13$ we retain $\widehat{n}_{13}=15$ nonconstant eigenfunctions, yielding a local interpolation space of dimension $\widehat{n}_{13}+1=16$ (including the constant function).
By contrast, as shown in \cref{fig:affine_checkerboard_gap7} for subdomain $7$, the values of $\delta_{15}$ are close to zero in a large portion of the parameter range, which means there is no visible uniform gap at the target cutoff. Therefore, in subdomain $7$, we choose the local interpolation dimension adaptively based on the spectral gap criterion.

\begin{figure}[t]
    \includegraphics[trim=0 3bp 0 0,clip,width=0.48\textwidth]{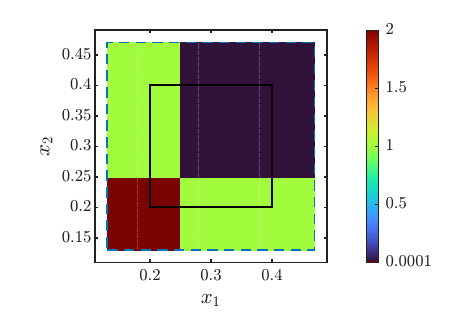}
    \includegraphics[trim=0 3bp 0 0,clip,width=0.48\textwidth]{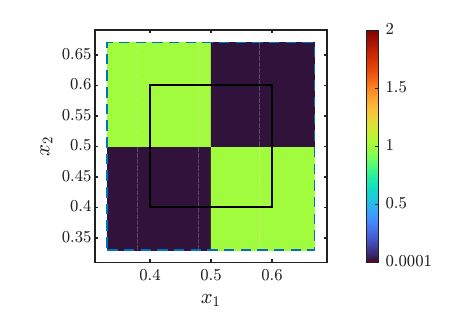}
    \caption{Sample local coefficient fields on the oversampling regions of
    the interior subdomains $7$ (left) and $13$ (right).}
    \label{fig:affine_checkerboard_subdomains}
\end{figure}

\begin{figure}[t]
    \includegraphics[trim=0 3bp 0 0,clip,width=0.9\textwidth]{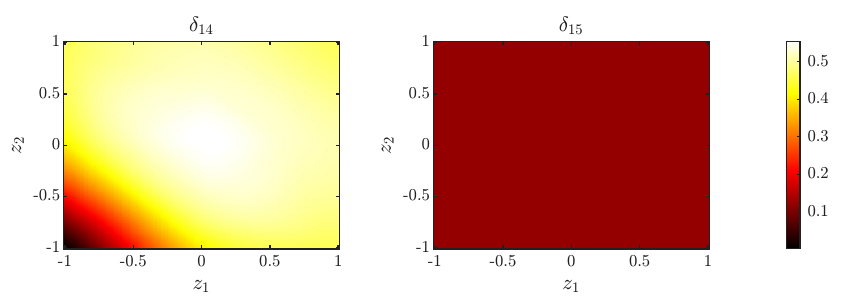}
    \caption{Relative spectral gaps $\delta_{14}$ (left) and $\delta_{15}$
    (right) for subdomain $13$, displayed with a shared color scale.}
    \label{fig:affine_checkerboard_gap13}
\end{figure}

\begin{figure}[t]
    \includegraphics[trim=0 3bp 0 0,clip,width=0.9\textwidth]{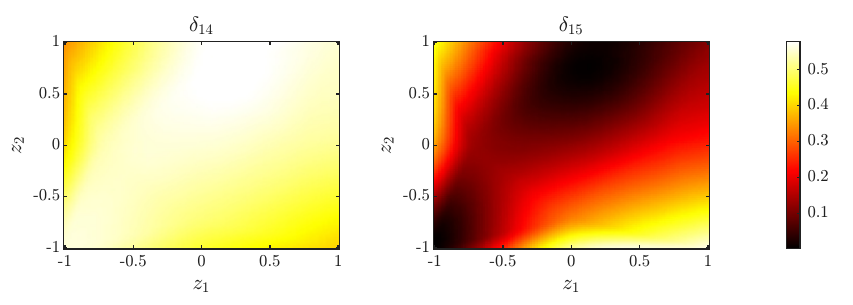}
    \caption{Relative spectral gaps $\delta_{14}$ (left) and $\delta_{15}$
    (right) for subdomain $7$, displayed with a shared color scale.}
    \label{fig:affine_checkerboard_gap7}
\end{figure}

This difference is reflected in the local interpolation error, shown in
\cref{fig:affine_checkerboard_local}.
The left plot uses the adaptive cutoff selected for subdomain $7$ from the
gap criterion, whereas the right plot uses $\widehat\nloc = 15$ on the same subdomain, giving a total local dimension of $16$ including the constant function.
Under the adaptive strategy, the boxplots of the $\beta_{\widehat\nloc}$ angles show a clear decay as the number of interpolation points increases. In contrast, the non-adaptive choice of $\widehat\nloc = 15$ yields large $\beta_{\widehat\nloc}$ angles with no visible decay, confirming that the absence of a uniform spectral gap at the target cutoff leads to poor approximation quality of the interpolated local approximation spaces. 

\begin{figure}[t]
    \includegraphics[trim=0 3bp 0 0,clip,width=0.45\textwidth]{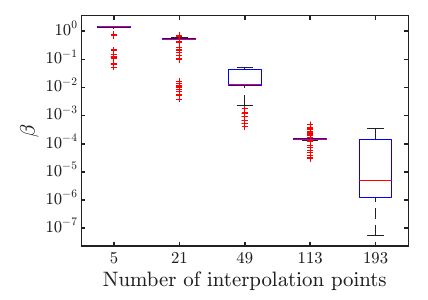}
    \quad
    \includegraphics[trim=0 3bp 0 0,clip,width=0.45\textwidth]{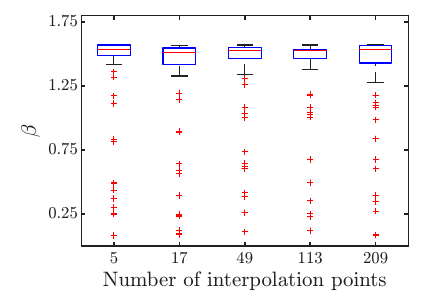}
    \caption{Local interpolation quality for subdomain $7$ using Variant II in both panels.
    Left: adaptive choice of the local interpolation dimension based on the
    spectral-gap criterion.
    Right: non-adaptive choice on the same subdomain.
    The comparison shows that using the correct number of local eigenfunctions
    is essential for obtaining good interpolation quality.}
    \label{fig:affine_checkerboard_local}
\end{figure}

Finally, we determine the dimension of the interpolated local approximation space adaptively on each subdomain and compute the resulting global energy errors, shown in \cref{fig:affine_checkerboard_global} for Variant II (left) and Variant I (right). As the number of interpolation points increases, the error decreases steadily and approaches the error obtained from the MS-GFEM. In this experiment, Variant I achieves comparable global accuracy with fewer interpolation points than Variant II. Since the adaptive selection strategy retains $\widehat{n} \leq n$ nonconstant eigenfunctions on each subdomain, with the constant function added to both spaces whenever required, it is expected that the resulting error may be larger than the MS-GFEM error even for a large number of interpolation points.

The interpolation nodes in both \cref{fig:affine_checkerboard_local} and
\cref{fig:affine_checkerboard_global} are constructed adaptively in all panels.
The adaptive/non-adaptive distinction in \cref{fig:affine_checkerboard_local}
refers only to the spectral cutoff. Since sparse-grid refinement adds nodes in batches
according to the respective interpolation problem, the prescribed point budget does not determine the final node count exactly. Thus, even with the same budget, the left and right panels may use different numbers of interpolation points.

\begin{figure}[h]
    \includegraphics[trim=0 3bp 0 0,clip,width=0.45\textwidth]{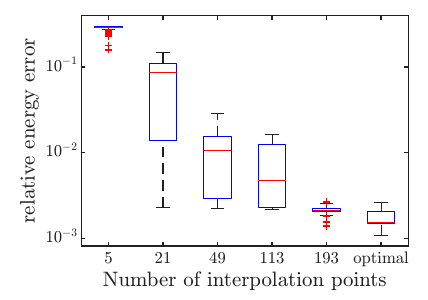}
    \quad
    \includegraphics[trim=0 3bp 0 0,clip,width=0.45\textwidth]{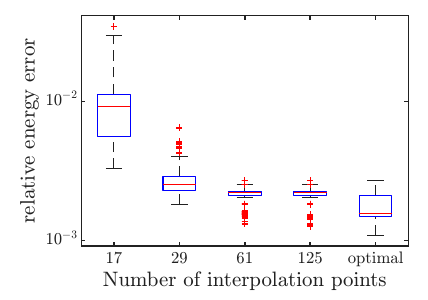}
    \caption{Global energy error for the interpolated local approximation spaces with adaptively determined dimensions: Variant II (left) and Variant I (right). The label ``optimal'' denotes the MS-GFEM solution with the optimal local approximation spaces.}
    \label{fig:affine_checkerboard_global}
\end{figure}

\section{Technical results}
\label{sec:appendix}

\begin{proof}[Proof of \cref{prop:inner_product_for_real}]
    \label{proof:inner_product_for_real}
    We only show this for $\htraceZC{\os{}}$. First, we show that $a_{\os{},\para}^{\partial}(\cdot,
    \cdot)$ is an inner product. Sesquilinearity 
    follows from linearity of $\AharmEx_{\para}$ and sesquilinearity of 
    $a_{\os{},\para}$. 
    Hermiticity follows from the hermiticity of $a_{\os{},\para}$ (because by assumption $A(\para)$ is real-valued) 
    and the fact that
    $\AharmEx_{\compconj{\para}} = \AharmEx_{\para}$ for real $\para$. 
    For positive definiteness, we note that for all $\u^{\partial} \in \htraceZC{\os{}}$
    with $a_{\os{},\para}^{\partial}(\u^{\partial}, \u^{\partial}) = 0$, it follows that
    $\AharmEx_{\para}(\u^{\partial}) = 0$. Taking the trace, we get $\u^{\partial} = 0$. 

    Next, we show completeness. We only consider interior domains where $\htraceZC{\os{}}$ consists of functions with zero mean. 
    The argument for boundary domains is similar.
    Let $(\u_n^{\partial})_{n \in \mathbb{N}}$ be a Cauchy sequence in $\htraceZC{\os{}}$
    with respect to the norm induced by $a_{\os{},\para}^{\partial}(\cdot, \cdot)$. 
    Let us define the linear bounded functional $\mathcal{M}: H^1(\os{};\C)\to \C$ by $\mathcal{M}(\u) = \int_{\partial\os{}} \trace(\u)\,\mathrm{d}s$.
    Then, by definition of the inner product, we have that
    $(\AharmEx_{\para}(\u_n^{\partial}))_{n \in \mathbb{N}}$ is a Cauchy sequence in
    $\mathrm{ker}(\mathcal{M})$ with respect to the energy norm $\|\cdot\|_{a,\para,\os{}}$. 
    By the extended Poincaré–Steklov inequality \cite[Lemma 3.30]{ern2021finite}, there exists a constant $C>0$ such that 
    \begin{equation*}
        \|\u\|_{H^1(\os{})} \leq C \|\u\|_{a,\para,\os{}}, \quad \forall \u \in \mathrm{ker}(\mathcal{M}).
    \end{equation*}
    Therefore, $(\AharmEx_{\para}(\u_n^{\partial}))_{n \in \mathbb{N}}$ is a Cauchy sequence
    in $H^1(\os{};\C)$ and hence converges to some $\u \in H^1(\os{};\C)$. We 
    define $\u^{\partial} := \trace(\u)$ as the trace of $\u$. Since $\u$ is the limit of
    $A(\para)$-harmonic functions, it is also $A(\para)$-harmonic. Thus, $\u$ and
    $\AharmEx_{\para}\u^{\partial}$ are both $A(\para)$-harmonic functions with the same
    trace, which implies that $\u = \AharmEx_{\para}\u^{\partial}$. 
    Moreover, we have $\u^{\partial}\in \htraceZC{\os{}}$, because the trace
    operator is bounded and $(\AharmEx_{\para}(\u_n^{\partial}))_{n \in \mathbb{N}}$
    converges to $\u = \AharmEx_{\para} \u^{\partial}$ in $H^1(\os{};\C)$.
\end{proof}

\subsection{Holomorphy of $T$}
\label{sec:proof_holomorphy_T}
We will several times use the
following product rule for holomorphic operator-valued functions. Let $V,W,X,Y$
be complex Banach spaces and $L:V\to \linop{W}{X}$ and $P:V\to \linop{X}{Y}$ be
holomorphic. Then, by \cite[Proposition 4.11]{zeidler1986_nonlinear}, the
operator $PL:V \to \linop{W}{Y}, (PL)(\v) := P(\v)\circ L(\v)$ is also
holomorphic. 

Since the operator $T$ is defined as the composition of the partition of unity operator
and its Hilbert adjoint, we need to show the holomorphy of both operators. 
\begin{lemma}
\label{prop:partition_of_unity_holomorphic}
    The $A(\para)$-harmonic extension operators $\AharmEx,\AharmEx^{0}: D \to \linop{\htraceZC{\os{}}}{H^1(\os{};\C)}$ and the partition of unity operator
    $\pu: D \to \linop{\htraceZC{\os{}}}{\HzC{\om{}}}$ are
    holomorphic.
\end{lemma}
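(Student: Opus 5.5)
We plan to reduce holomorphy of $\AharmEx$ to holomorphy of the inverse of a holomorphic family of isomorphisms. First, fix a bounded right inverse $R:\htraceZC{\os{}}\to H^1(\os{};\C)$ of the trace operator, independent of $\para$; in the boundary case we take it to map into functions vanishing on $\partial\os{}\cap\partial\Omega$. Writing $\AharmEx_{\para}\u^{\partial}=R\u^{\partial}-\w_{\para}$ with $\w_{\para}\in\HzC{\os{}}$, the correction is determined by
\[
a_{\os{},\para}(\w_{\para},\v)=a_{\os{},\para}(R\u^{\partial},\v),\qquad \v\in\HzC{\os{}}.
\]
Let $\mathcal{A}_{\para}:\HzC{\os{}}\to \HzC{\os{}}^{*}$ denote the operator associated with $a_{\os{},\para}$, where the antidual is used because the form is sesquilinear. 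Let also $\mathcal{B}_{\para}:H^1(\os{};\C)\to\HzC{\os{}}^{*}$ be the analogous operator on $H^1$. Then
\[
\AharmEx_{\para}=R-\mathcal{A}_{\para}^{-1}\mathcal{B}_{\para}R .
\]

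The key steps are as follows. (1) Since $A:D\to\Linf{\Omega;\C}$ is holomorphic and the map $A\mapsto\mathcal{B}$ (multiplication of gradients) is bounded linear from $L^\infty$ into $\linop{H^1}{(H_0^1)^*}$, the maps $\para\mapsto\mathcal{A}_{\para},\mathcal{B}_{\para}$ are holomorphic. (2) Uniform ellipticity, via $\mathrm{Re}\,a_{\os{},\para}(\v,\v)\ge\Amin\|\nabla\v\|^2$, and Lax--Milgram give invertibility of $\mathcal{A}_{\para}$ for every $\para\in D$. (3) Inversion is analytic on the open set of invertible operators (Neumann series), so $\para\mapsto\mathcal{A}_{\para}^{-1}$ is holomorphic. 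Then the product rule quoted above gives holomorphy of $\AharmEx$. (4) For $\AharmEx^{0}$ in the interior case, $\mathcal{M}_{\para}(\u)=a_{\om{},\para}(\chi\u,\chi)/a_{\om{},\para}(\chi,\chi)$. Here the numerator is a holomorphic family of bounded functionals on $H^1$, and the denominator is a holomorphic scalar function that does not vanish, since its real part is at least $\Amin\|\nabla\chi\|^2>0$. Hence $\para\mapsto\mathcal{M}_{\para}\circ\AharmEx_{\para}$ is holomorphic into $\linop{\htraceZC{\os{}}}{\C}$. Composing with the embedding of constants gives holomorphy of $\AharmEx^{0}$; in the boundary case this is trivial because $\AharmEx^{0}=\AharmEx$. (5) Finally, $\pu_{\para}=\chi\,\AharmEx^{0}_{\para}(\cdot)$, where multiplication by $\chi$ followed by restriction is a fixed bounded linear map $H^1(\os{};\C)\to\HzC{\om{}}$ because $\chi\in W^{1,\infty}$ is supported in $\overline{\om{}}$. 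Composition with a fixed bounded operator preserves holomorphy.

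The main obstacle is essentially technical care in step (1). One must check that $\para\mapsto\mathcal{B}_{\para}$ is holomorphic as an operator-valued map, rather than merely pointwise, and that the sesquilinear convention causes no problem. The convention is harmless because conjugation acts only on the test function, so $a_{\os{},\para}$ depends $\C$-linearly on $A(\para)$ through the integrand $A\nabla\u\cdot\overline{\nabla\v}$. Consequently $\mathcal{B}$ is the composition of the holomorphic map $A$ with a bounded $\C$-linear map, which is holomorphic. We also need to confirm that $R$ can be chosen to map into $H^1$ functions whose trace lies in the prescribed subspace, and that the zero-mean constraint is preserved. Both follow from the standard trace theorem on Lipschitz domains.
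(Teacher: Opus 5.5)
Your proof is correct and follows essentially the same route as the paper. You lift the boundary data by a fixed, parameter-independent extension: the paper uses the Laplace-harmonic extension $E_{\Delta}$, which is just one particular bounded right inverse of the trace, where you take a generic $R$. You then write $\AharmEx_{\para}$ as this lift plus the solution of a homogeneous problem whose solution operator depends holomorphically on $A(\para)$, and treat $\AharmEx^{0}$ and $\pu$ exactly as the paper does; the only cosmetic difference is that you justify holomorphy of $\para\mapsto\mathcal{A}_{\para}^{-1}$ directly via Lax--Milgram and analyticity of operator inversion, whereas the paper cites it from \cite{cohen2015approximation}.
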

\begin{proof}
    The idea is to transfer the inhomogeneous boundary condition to the 
    right-hand side. Let $\AharmEx_{\Delta} : \htraceZC{\os{}} \to \H{\os{}}$ 
    denote the harmonic extension operator for the Laplace operator, that is, 
    $\AharmEx_{\Delta}(\u^{\partial})$ is the unique solution of 
    $$
        -\Delta\AharmEx_{\Delta}(\u^{\partial}) = 0, \quad 
        \AharmEx_{\Delta}(\u^{\partial})|_{\partial\os{}} = \u^{\partial}.
    $$
    Writing $\AharmEx_{\para}(\u^{\partial}) = w + \AharmEx_{\Delta}(\u^{\partial})$, 
    $w \in \HzC{\os{}}$ solves the homogeneous boundary value problem
    $$
        -\di(A_{\para} \nabla w) = \di(A_{\para} \nabla \AharmEx_{\Delta}(\u^{\partial})),
        \quad w|_{\partial\os{}} = 0.
    $$
    Due to this representation of $\AharmEx$ as the sum of a solution of a homogeneous boundary value problem 
    and a parameter-independent harmonic extension, we only need to show that $w$ depends holomorphically on $\para$.
    The holomorphy of such solution operators is the content of 
    \cite{cohen2015approximation}. Following \cite[(2.21)]{cohen2015approximation}, we consider the spaces
     $(X,V,W) = (L^{\infty}(\os{}; \C), 
    \HzC{\os{}}, H^{-1}(\os{}; \C))$ for the coefficient, solution, and right-hand side, respectively.
    For $K\in X$ we
    define $\mathcal{B}(K) \in \linop{V}{W}$ by $\mathcal{B}(K)u = -\di( K \nabla u)$. 
    The map $L: X \to \linop{W}{V}$, $K \mapsto \mathcal{B}(K)^{-1}$, is holomorphic 
    at every uniformly elliptic $K \in X$. 
    Define also $Q: X \to \linop{\htraceZC{\os{}}}{W}$ by 
    $Q(K)(\u^{\partial}) = \di(K \nabla \AharmEx_{\Delta}(\u^{\partial}))$, 
    which is linear (hence holomorphic) in $K$. By 
    the product rule, the composition 
    $LQ: X \to \linop{\htraceZC{\os{}}}{V}$ is holomorphic.
    Since $A: D \to X$ is holomorphic and
    $w = (LQ) \circ A(\para)(\u^{\partial})$, we obtain
    $$
        \AharmEx_{\para}(\u^{\partial}) 
        = (LQ) \circ A(\para)(\u^{\partial}) + \AharmEx_{\Delta}(\u^{\partial}),
    $$
    thus $\AharmEx$ is holomorphic in $\para$ as the sum of a holomorphic and a constant map. 

    For an interior domain, uniform ellipticity gives
    \begin{equation*}
        \operatorname{Re}a_{\om{},\para}(\chi,\chi)
        \geq \Amin\|\nabla\chi\|_{L^2(\om{})}^2>0.
    \end{equation*}
    Hence $\para\mapsto a_{\om{},\para}(\chi,\chi)^{-1}$ is holomorphic. The map
    \begin{equation*}
        \para\mapsto
        a_{\om{},\para}(\chi\AharmEx_{\para}\,\cdot,\chi)
        \in\linop{\htraceZC{\os{}}}{\C}
    \end{equation*}
    is also holomorphic, so~\eqref{eq:def_AharmEx_zero} shows that
    $\AharmEx^{0}$ is holomorphic. For a boundary domain this follows from
    $\AharmEx_{\para}^{0}=\AharmEx_{\para}$. 
    
    Finally, the partition of unity
    operator $\pu{\para} = \chi\AharmEx_{\para}^{0}$ is
    holomorphic, because it is the composition of $\AharmEx^{0}$ and
    multiplication by $\chi$, which is a fixed bounded linear operator. 
\end{proof}
From now on, we assume that the domain $D$ is symmetric with respect 
to the real axis. This is not a restriction, since we can always shrink $D$. 
Next, we show that
also the pointwise defined Hilbert adjoint of a holomorphic 
operator-valued map is holomorphic, if the argument is complex-conjugated.
We will need a similar result for the Banach space adjoint, hence 
we treat both in the following lemma.
\begin{lemma}
    \label{lem:antidual_transpose_holomorphic}
    Let $V$ and $W$ be complex Banach spaces with anti-duals $V^*$ and $W^*$.
    For $L \in \linop{V}{W}$, define the Banach adjoint
    $L^{\top} \in \linop{W^*}{V^*}$ by
    \begin{equation*}
        (L^{\top} \phi)(v) := \phi(Lv), \quad \phi \in W^*,\; v \in V.
    \end{equation*}
    Suppose $L: D \to \linop{V}{W}$ is holomorphic. Then, the map
    $D \to \linop{W^*}{V^*}$ given by 
    $\para \mapsto L(\compconj{\para})^{\top}$ is holomorphic.
    If $V$ and $W$ are Hilbert spaces, then the map $D\to\linop{W}{V}$ given by
    $\para \mapsto L(\compconj{\para})^{\ast}$, is holomorphic, where
    $L(\compconj{\para})^{\ast}$ is the Hilbert adjoint of
    $L(\compconj{\para})$.
\end{lemma}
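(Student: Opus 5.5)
The plan is to reduce holomorphy of $\para\mapsto L(\compconj{\para})^{\top}$ to a local power series expansion, transport that expansion through the adjoint, and then obtain the Hilbert case from the Riesz isomorphism.

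First, fix $\para_0\in D$. Since $D$ is symmetric with respect to the real axis, $\compconj{\para_0}\in D$. Holomorphy of $L$ at $\compconj{\para_0}$ gives a power series
\begin{equation*}
L(\para)=\sum_{\nu}L_\nu(\para-\compconj{\para_0})^\nu, \qquad L_\nu\in\linop{V}{W},
\end{equation*}
converging absolutely in operator norm on a polydisc around $\compconj{\para_0}$. Equivalently, we may use separate holomorphy together with local boundedness and Hartogs' theorem, as in the proof of \cref{prop:holomorphy_spectral_projection}. Substituting $\compconj{\para}$ gives
\begin{equation*}
L(\compconj{\para})=\sum_\nu L_\nu\,\compconj{(\para-\para_0)}^{\nu}.
\end{equation*}

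The key step is computing how the Banach adjoint treats scalars when the duals are \emph{anti}-duals. For $\phi\in W^*$ antilinear and $c\in\C$, we have $((cL)^{\top}\phi)(v)=\phi(cLv)=\compconj{c}\,\phi(Lv)$. Hence $(cL)^{\top}=\compconj{c}\,L^{\top}$, so the map $L\mapsto L^{\top}$ is conjugate-linear. It is also isometric, since $\|L^{\top}\|=\|L\|$ by Hahn--Banach applied to anti-duals, i.e.\ to conjugates of linear functionals. Therefore
\begin{equation*}
L(\compconj{\para})^{\top}=\sum_\nu L_\nu^{\top}(\para-\para_0)^\nu .
\end{equation*}
Because of the isometry, this series converges absolutely in $\linop{W^*}{V^*}$ on the same polydisc. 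It is therefore a convergent power series in $\para$, and the map is holomorphic at $\para_0$.

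For the Hilbert case, we write $L^{\ast}=R_V^{-1}L^{\top}R_W$, where $R_H:H\to H^*$, $x\mapsto(\cdot,x)$, are the Riesz maps into the anti-duals. With the sesquilinear convention that is linear in the first slot, these maps are $\C$-\emph{linear} isometric isomorphisms. Then $\para\mapsto L(\compconj{\para})^{\ast}$ is the composition of the holomorphic map from the first step with fixed bounded linear operators, so it is holomorphic by the product rule quoted above. Alternatively, one can see directly that $L\mapsto L^{\ast}$ is a conjugate-linear isometry and repeat the series argument.

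The main obstacle I expect is checking this linearity convention carefully. If the Riesz map or the adjoint picked up the wrong conjugation, the result would be antiholomorphic rather than holomorphic. The series manipulation itself is routine once we have the absolute convergence of the operator-valued expansion.
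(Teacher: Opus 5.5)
Your proof is correct and is essentially the paper's argument: both use that $L\mapsto L^{\top}$ is conjugate-linear with $\|L^{\top}\|\le\|L\|$, and both use the factorization $L^{\ast}=R_V^{-1}L^{\top}R_W$; the only difference is that you transport a local power series through the transpose, whereas the paper checks directly that $\hb\mapsto(d_{\compconj{\para}}L(\compconj{\hb}))^{\top}$ is a $\C$-linear Fréchet derivative, which avoids invoking the power-series characterization of holomorphy. There is one slip, at exactly the point you flagged: with the inner product linear in the first slot, the $\C$-linear Riesz map into the anti-dual is $x\mapsto(x,\cdot)$, because $(\cdot,x)$ is a linear (not antilinear) functional and $x\mapsto(\cdot,x)$ is conjugate-linear.
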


\begin{proof}
    \label{proof:antidual_transpose_holomorphic}
    Since $\phi \in W^*$ is anti-linear, $((\lambda L)^{\top}\phi)(v) = \phi(\lambda Lv)
    = \compconj{\lambda}\,\phi(Lv) = \compconj{\lambda}\,(L^{\top}\phi)(v)$ for all
    $\lambda\in\C$, $L\in\linop{V}{W}$, $\phi\in W^*$, $v\in V$,
    so $L\mapsto L^{\top}$ is $\C$-anti-linear.
    From the definition, $|(L^{\top}\phi)(v)| = |\phi(Lv)| \leq \|\phi\|_{W^*}\|L\|\|v\|_V$,
    giving $\|L^{\top}\|_{\linop{W^*}{V^*}} \leq \|L\|_{\linop{V}{W}}$.
    
    We denote the map given by $\para \mapsto L(\compconj{\para})^{\top}$ by $\widetilde{L}$.
    We claim that $d_{\para}\widetilde{L}(\hb) := (d_{\compconj{\para}}L(\compconj{\hb}))^{\top}$
    is the Fréchet derivative of $\widetilde{L}$ at $\para$,
    where $d_{\compconj{\para}}L$ is the (complex-linear) derivative of $L$ at $\compconj{\para}$.
    For $\alpha\in\C$, using $\C$-linearity of $d_{\compconj{\para}}L$ 
    and $\C$-anti-linearity of taking Banach adjoints, we get
    \begin{equation*}
        d_{\para}\widetilde{L}(\alpha \hb) 
        = \bigl(\compconj{\alpha}\,d_{\compconj{\para}}L(\compconj{\hb})\bigr)^{\top}
        = \alpha\,\bigl(d_{\compconj{\para}}L(\compconj{\hb})\bigr)^{\top}
        = \alpha\,d_{\para}\widetilde{L}(\hb),
    \end{equation*}
    so $d_{\para}\widetilde{L}$ is $\C$-linear, and it is bounded since
    $\|d_{\para}\widetilde{L}(\hb)\|_{\linop{W^*}{V^*}} \leq \|d_{\compconj{\para}}L(\compconj{\hb})\|_{\linop{V}{W}} \leq C\|\hb\|_{\C^{\dloc}}$.
    Using $\|T^{\top}\|_{\linop{W^*}{V^*}}\leq\|T\|_{\linop{V}{W}}$ we obtain
    \begin{align*}
          \|\widetilde{L}(\para+\hb) - \widetilde{L}(\para) - d_{\para}\widetilde{L}(\hb)\|_{\linop{W^*}{V^*}}
          &= \bigl\|\bigl(L(\compconj{\para}+\compconj{\hb}) - L(\compconj{\para}) 
              - d_{\compconj{\para}}L(\compconj{\hb})\bigr)^{\top}\bigr\|_{\linop{W^*}{V^*}} \\
          &\leq \bigl\|L(\compconj{\para}+\compconj{\hb}) - L(\compconj{\para}) 
              - d_{\compconj{\para}}L(\compconj{\hb})\bigr\|_{\linop{V}{W}}
          = o(\|\hb\|_{\C^{\dloc}}),
    \end{align*}
    where the last step uses holomorphy of $L$ at $\compconj{\para}$.
    Hence $\widetilde{L}$ is holomorphic.

    Now let $V$ and $W$ be Hilbert spaces with Riesz isomorphisms
    $R_V\in\linop{V}{V^*}$ and $R_W\in\linop{W}{W^*}$, respectively. Then, a
    direct computation shows that $L(\compconj{\para})^{\ast} = R_V^{-1}
    L(\compconj{\para})^{\top} R_W$. Since $R_V^{-1}$ and $R_W$ are constant in
    $\para$ and hence holomorphic and $\para \mapsto L(\compconj{\para})^{\top}$
    is holomorphic, we conclude that $\para \mapsto L(\compconj{\para})^{\ast}$
    is holomorphic as well.
\end{proof}
Our goal is to show that the operator $T$, initially defined for real 
parameters via~\eqref{eq:def_operator_T}, admits a holomorphic extension to 
a complex domain~$D$. Since the definition of $T_{\para}$ for real $\para$ 
involves the Hilbert adjoint with respect to parameter-dependent inner products, 
which are not available for complex $\para$, we first reformulate $T_{\para}$ 
using a fixed reference inner product. This reformulation will then serve 
as the definition of $T_{\para}$ for complex~$\para$.

\begin{definition}[Operators Encoding the Change of Inner Product]
    \label{def:inner_product_change_operators}
    Let $\para_0 \in \Yloc$ be a fixed reference parameter. 
    For $\para \in D$, the operators $G^{\partial}_{\para}: \htraceZC{\os{}} \to \htraceZC{\os{}}$ and $G_{\para}: \HzC{\om{}} \to \HzC{\om{}}$ are defined by 
    \begin{align*}
        a_{\os{},\para_0}^{\partial}(G^{\partial}_{\para} \u^{\partial}, \v^{\partial}) &= a_{\os{},\para}^{\partial}(\u^{\partial}, \v^{\partial}) \quad \text{for all } \u^{\partial}, \v^{\partial} \in \htraceZC{\os{}}, \\
        a_{\om{},\para_0}(G_{\para} \u, \v) &= a_{\om{},\para}(\u, \v) \quad \text{for all } \u, \v \in \HzC{\om{}}.
    \end{align*}
\end{definition}
These operators are well-defined by the 
    Lax--Milgram theorem \cite[Theorem 2.1]{cohen2015approximation} applied to the fixed reference inner product.
We now establish the holomorphy of $G^{\partial}$ and $G$.

\begin{lemma}
    \label{prop:holomorphy_G_V}
    The operator-valued functions $G^{\partial}:D \to \linop{\htraceZC{\os{}}}$ and $G:
    D\to\linop{\HzC{\om{}}}$ are holomorphic.
    Moreover, there exists an open neighborhood $\widetilde{D} \subset D$ of $\Yloc$ such that
    $G^{\partial}_{\para}$ is invertible for all $\para \in \widetilde{D}$, and
    $(G^{\partial}_{\para})^{-1}$ is also holomorphic on $\widetilde{D}$.
\end{lemma}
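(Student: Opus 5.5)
The plan is to write $G^{\partial}$ and $G$ as compositions of holomorphic operator-valued maps with fixed Riesz isomorphisms. Then the product rule for holomorphic operator-valued functions and Lemma~\ref{lem:antidual_transpose_holomorphic} do most of the work, and invertibility near $\Yloc$ follows from an openness argument.

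\textbf{The operator $G$.} Let $R_0:\HzC{\om{}}\to \HzC{\om{}}^*$ be the Riesz (anti-dual) isomorphism for the fixed inner product $a_{\om{},\para_0}$. Let $B_{\para}:\HzC{\om{}}\to\HzC{\om{}}^*$ be defined by $(B_{\para}\u)(\v):=a_{\om{},\para}(\u,\v)$. Then $G_{\para}=R_0^{-1}B_{\para}$. The map $\para\mapsto B_{\para}$ is the composition of the holomorphic map $A:D\to\Linf{\om{};\C}$ with the bounded $\C$-linear map $K\mapsto(\u\mapsto\int K\nabla\u\cdot\overline{\nabla\cdot})$. This form is anti-linear in $\v$ and linear in $K$, so $B$ is holomorphic. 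Hence $G$ is holomorphic.

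\textbf{The operator $G^{\partial}$.} By definition,
\[
a^{\partial}_{\os{},\para}(\u^\partial,\v^\partial)=\bigl(\mathcal{A}_{\para}\AharmEx_{\para}\u^\partial\bigr)\bigl(\AharmEx_{\compconj{\para}}\v^\partial\bigr),
\]
where $\mathcal{A}_{\para}:H^1(\os{};\C)\to H^1(\os{};\C)^*$ is the energy-form operator, holomorphic as above. Written with the Banach adjoint, the associated operator $\htraceZC{\os{}}\to\htraceZC{\os{}}^*$ is
\[
B^\partial_{\para}=(\AharmEx_{\compconj{\para}})^{\top}\,\mathcal{A}_{\para}\,\AharmEx_{\para}.
\]
By Lemma~\ref{prop:partition_of_unity_holomorphic}, $\AharmEx$ is holomorphic. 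By Lemma~\ref{lem:antidual_transpose_holomorphic}, $\para\mapsto(\AharmEx_{\compconj{\para}})^{\top}$ is holomorphic. This is exactly where the conjugation in the definition~\eqref{eq:def_sesquilinear_forms} is needed. The product rule then gives holomorphy of $B^\partial$, and $G^{\partial}_{\para}=(R_0^\partial)^{-1}B^\partial_{\para}$, with $R_0^\partial$ the Riesz map of $a^\partial_{\os{},\para_0}$. This uses Lemma~\ref{prop:inner_product_for_real}: since $\para_0$ is real, $a^\partial_{\os{},\para_0}$ is a Hilbert inner product, so $R_0^\partial$ is an isomorphism.

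\textbf{Invertibility.} For real $\para\in\Yloc$, $a^\partial_{\os{},\para}$ is an inner product equivalent to $a^\partial_{\os{},\para_0}$, with the constants from the infimum characterisation used in the proof of Theorem~\ref{thm:local_approximation_error}, bounded by $\Amax/\Amin$. Hence $G^\partial_{\para}$ is coercive and invertible by Lax--Milgram. The set of invertible operators in $\linop{\htraceZC{\os{}}}$ is open, and $G^\partial$ is continuous. So for each real $\para$ there is a complex ball on which $G^\partial$ stays invertible. The union of these balls is an open neighbourhood $\widetilde D$ of $\Yloc$, which can be taken connected and symmetric. Inversion is holomorphic on the group of invertibles, via its Neumann series. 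So $(G^\partial)^{-1}$, being the composition of a holomorphic map with inversion, is holomorphic on $\widetilde D$.

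The main obstacle is the correct bookkeeping of linearity versus anti-linearity in $B^\partial$, so that it is genuinely $\C$-linear in $\para$. One needs $\v^\partial\mapsto a^\partial_{\os{},\para}(\u^\partial,\v^\partial)$ anti-linear, and the dependence on $\para$ through $\AharmEx_{\compconj{\para}}$ holomorphic rather than antiholomorphic. Lemma~\ref{lem:antidual_transpose_holomorphic} handles this, but it must be checked that the composite coincides with the sesquilinear form pointwise.
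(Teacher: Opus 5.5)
Your proposal is correct and follows essentially the paper's route. The shared steps are:
\begin{itemize}
\item the factorisation $(\AharmEx_{\compconj{\para}})^{\top}\mathcal{A}_{\para}\AharmEx_{\para}$, made holomorphic via Lemma~\ref{lem:antidual_transpose_holomorphic};
\item composition with the inverse of the reference operator, where your $R_0^\partial$ is exactly the paper's $\mathcal{A}^{\partial}_{\para_0}$;
\item openness of the invertible operators together with holomorphy of inversion.
\end{itemize}
The only difference is that you check invertibility at real parameters directly for $G^{\partial}_{\para}$, through explicit coercivity with constant $\Amin/\Amax$. The paper instead invokes Lax--Milgram for $\mathcal{A}^{\partial}_{\para}$, so your version makes the required norm equivalence slightly more explicit.
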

\begin{proof}
    We only show this for $G_{\partial}$. The argument for $G$ is simpler, since the
    operator-harmonic extension is not involved in the definition of $G$.
    Define the operator $\mathcal{A}^{\partial}_{\para}: \htraceZC{\os{}} \to (\htraceZC{\os{}})^*$ by 
    \begin{equation*}
        (\mathcal{A}^{\partial}_{\para} \u^{\partial})(\v^{\partial}) 
        = a_{\os{}, \para}^{\partial}(\u^{\partial}, \v^{\partial})
        = a_{\os{}, \para}(\AharmEx_{\para} \u^{\partial}, \AharmEx_{\compconj{\para}} \v^{\partial}).
    \end{equation*}
    Here $(\htraceZC{\os{}})^*$ is the anti-dual of $\htraceZC{\os{}}$ with respect to the fixed reference inner product.
    We decompose this using the Banach space adjoint. Let $\mathcal{A}_{\para}: H^1(\os{}) \to (H^1(\os{}))^*$ 
    be the operator induced by the sesquilinear form $a_{\os{}, \para}$ given by
    \begin{equation*}
        (\mathcal{A}_{\para} \u)(\v) = a_{\os{}, \para}(\u, \v).
    \end{equation*}
    Since $A$ is holomorphic, the operator $\mathcal{A}_{\para}$ is 
    holomorphic in $\para$.
    Let $\AharmEx_{\compconj{\para}}^{\top}: (H^1(\os{}))^* \to (\htraceZC{\os{}})^*$ denote the 
    Banach adjoint of $\AharmEx_{\compconj{\para}}$ (see \cref{lem:antidual_transpose_holomorphic}).
    Applying \cref{lem:antidual_transpose_holomorphic} with $L = \AharmEx$ 
    (which is holomorphic by \cref{prop:partition_of_unity_holomorphic}), 
    we infer that the map $\para \mapsto \AharmEx_{\compconj{\para}}^{\top}$ is holomorphic.
    The operator $\mathcal{A}^{\partial}_{\para}$ can therefore be written as the composition
    $
        \mathcal{A}^{\partial}_{\para} = \AharmEx_{\compconj{\para}}^{\top} \mathcal{A}_{\para} \AharmEx_{\para}
    $
    of holomorphic operator-valued functions, hence $\mathcal{A}^{\partial}_{\para}$ is holomorphic.
    For the invertibility of $\mathcal{A}^{\partial}_{\para}$, we first treat real parameters $\para \in \Yloc$.
    For real $\para$, $\mathcal{A}^{\partial}_{\para}$ is defined via the inner product
    $a^{\partial}_{\os{},\para}(\cdot,  \cdot)$, hence invertibility of $\mathcal{A}^{\partial}_{\para}$ for real $\para$ follows from the complex
    Lax--Milgram theorem \cite[Theorem 2.1]{cohen2015approximation}.
     Since the set of boundedly 
    invertible operators is open in $\linop{\htraceZC{\os{}}}{(\htraceZC{\os{}})^*}$
    (\cite[Remark 1.2.36]{zech2018sparse}) and operator inversion is 
    holomorphic (\cite[Lemma 1.2.35]{zech2018sparse}), we conclude that there 
    exists an open neighborhood $\widetilde{D} \subset D$ of $\Yloc$ such that 
    $\mathcal{A}^{\partial}_{\para}$  is invertible for all $\para \in \widetilde{D}$ and 
    $(\mathcal{A}^{\partial}_{\para})^{-1}$ depends holomorphically on $\para$.
    Since $G^{\partial}_{\para}$ can be expressed as
    \begin{equation*}
        G^{\partial}_{\para} = (\mathcal{A}^{\partial}_{\para_0})^{-1} \mathcal{A}^{\partial}_{\para},
    \end{equation*}
    holomorphy of $G^{\partial}$ follows from the holomorphy of $\mathcal{A}^{\partial}_{\para}$
     and invertibility of $G^{\partial}_{\para}$ for $\para \in \widetilde{D}$  from
     invertibility of $\mathcal{A}^{\partial}_{\para}$.
    Finally, since $(G^{\partial}_{\para})^{-1} = (\mathcal{A}^{\partial}_{\para})^{-1} \mathcal{A}^{\partial}_{\para_0}$, the holomorphy of $(G^{\partial}_{\para})^{-1}$ also follows.
\end{proof}

\begin{lemma}
    \label{lem:adjoint_relation}
    Let $\para \in \Yloc$ and $L: \htraceZC{\os{}} \to \HzC{\om{}}$ be a bounded linear operator. Let $L^{\ast}_{0}$ denote the Hilbert adjoint 
    with respect to the fixed reference inner products $a_{\os{},\para_0}^{\partial}(\cdot, \cdot)$ and $a_{\om{},\para_0}(\cdot, \cdot)$
    , and $L^*$ denote the 
    Hilbert adjoint with respect to the $\para$-dependent inner products
    $a_{\os{},\para}^{\partial}(\cdot, \cdot)$ and $a_{\om{},\para}(\cdot, \cdot)$. Then
    \begin{equation}
        \label{eq:adjoint_relation}
        L^* = (G^{\partial}_{\para})^{-1} L^{\ast}_{0} G_{\para}.
    \end{equation}
\end{lemma}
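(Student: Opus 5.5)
The plan is to verify the defining identity of the Hilbert adjoint with respect to the $\para$-dependent inner products directly. Throughout, I use only \cref{def:inner_product_change_operators} and the invertibility of $G^{\partial}_{\para}$ from \cref{prop:holomorphy_G_V} ($\para\in\Yloc\subset\widetilde{D}$).

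\emph{Step 1: the $\para$-adjoint.} For $\u\in\HzC{\om{}}$ and $\v^{\partial}\in\htraceZC{\os{}}$, the element $L^*\u$ is characterised by
\begin{equation*}
    a^{\partial}_{\os{},\para}(L^*\u,\v^{\partial})=a_{\om{},\para}(\u,L\v^{\partial})
    \quad\text{for all } \v^{\partial}\in\htraceZC{\os{}}.
\end{equation*}
It therefore suffices to show that $\w^{\partial}:=(G^{\partial}_{\para})^{-1}L^{\ast}_{0}G_{\para}\u$ satisfies the same identity. Uniqueness of this element then follows because $a^{\partial}_{\os{},\para}$ is an inner product for real $\para$ by \cref{prop:inner_product_for_real}.

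\emph{Step 2: the computation.} Apply the definition of $G^{\partial}_{\para}$ with first argument $\w^{\partial}$:
\begin{equation*}
    a^{\partial}_{\os{},\para}(\w^{\partial},\v^{\partial})
    =a^{\partial}_{\os{},\para_0}(G^{\partial}_{\para}\w^{\partial},\v^{\partial})
    =a^{\partial}_{\os{},\para_0}(L^{\ast}_{0}G_{\para}\u,\v^{\partial}).
\end{equation*}
Next use the definition of the reference adjoint $L^{\ast}_{0}$, which turns this into $a_{\om{},\para_0}(G_{\para}\u,L\v^{\partial})$. Finally, the definition of $G_{\para}$ gives $a_{\om{},\para}(\u,L\v^{\partial})$. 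This is exactly the characterising identity from Step 1, so $L^*\u=\w^{\partial}$ for every $\u$, which proves \eqref{eq:adjoint_relation}.

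\emph{Main obstacle.} The only delicate point is the order of arguments in the sesquilinear forms. \cref{def:inner_product_change_operators} places $G$ in the first (linear) slot, and the adjoint identity must also be used with the operator in the first slot. The chain above is arranged so that each step uses these definitions exactly as stated, with no conjugation needed.

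A secondary issue is well-definedness: $(G^{\partial}_{\para})^{-1}$ must exist as a bounded operator. This is supplied by \cref{prop:holomorphy_G_V}. Alternatively, for real $\para$ it follows from the norm equivalence of the two inner products, as used in the proof of \cref{thm:local_approximation_error}, together with Lax--Milgram.
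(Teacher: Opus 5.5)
Your proof is correct. It uses the same basic strategy as the paper, direct verification of the defining identity of the adjoint, but arranges the computation more economically.

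The paper writes the adjoint identity with $L^*$ in the second slot and evaluates $a_{\om{},\para}(L\u^{\partial},\v)$ in two ways. It therefore has to move $G_{\para}$ and $G^{\partial}_{\para}$ from the first into the second argument. This is why it first shows that both operators are self-adjoint with respect to the reference inner products for real $\para$, using Hermiticity of the forms. At the end it needs surjectivity of $G^{\partial}_{\para}$ to cancel it.

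You instead write the characterising identity with the adjoint in the first slot. This is a harmless conjugation, valid because both pairs of forms are inner products for real $\para$ and $\para_0$. You then test the explicit candidate $(G^{\partial}_{\para})^{-1}L^{\ast}_{0}G_{\para}\u$. As a result, the definitions of $G^{\partial}_{\para}$, $L^{\ast}_{0}$ and $G_{\para}$ are each used exactly as stated. Only non-degeneracy of $a^{\partial}_{\os{},\para}$ is needed to conclude, and invertibility of $G^{\partial}_{\para}$ is needed only to define the candidate.

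Your route avoids the self-adjointness of $G_{\para}$ and $G^{\partial}_{\para}$ altogether. The paper's route establishes that self-adjointness along the way, but the lemma itself does not require it.
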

 
\begin{proof}
    For any $\u^{\partial} \in \htraceZC{\os{}}$ and $\v \in \HzC{\om{}}$, we have 
    \begin{equation}
        \label{eq:adjoint_relation_proof_1}
        a_{\om{},\para}(L \u^{\partial}, \v)
        = a_{\os{},\para}^{\partial}(\u^{\partial}, L^* \v)
        = a_{\os{},\para_0}^{\partial}(G^{\partial}_{\para} \u^{\partial}, L^* \v).
    \end{equation}
    Moreover, using the definition of the operator $G(\para)$, we obtain
    \begin{equation}
        \label{eq:adjoint_relation_proof_2}
        a_{\om{},\para}(L \u^{\partial}, \v)
        = a_{\om{},\para_0}(G_{\para} L \u^{\partial}, \v)
        = a_{\om{},\para_0}(L \u^{\partial}, G_{\para} \v)
        = a_{\os{},\para_0}^{\partial}(\u^{\partial}, L^{\ast}_{0} G_{\para} \v),
    \end{equation}
    where the second equality uses that $G_{\para}$ is self-adjoint with 
    respect to $a_{\om{},\para_0}(\cdot, \cdot)$ for real $\para$, since for 
    $\u,\v \in \HzC{\om{}}$, we have
    \begin{equation}
        \label{eq:adjoint_relation_proof_3}
        a_{\om{},\para_0}(G_{\para} \u, \v)
        = a_{\om{},\para}(\u, \v)
        = \compconj{a_{\om{},\para}(\v, \u)}
        = \compconj{a_{\om{},\para_0}(G_{\para} \v, \u)}
        = a_{\om{},\para_0}(\u, G_{\para} \v).
    \end{equation}
    Combining~\eqref{eq:adjoint_relation_proof_1},~\eqref{eq:adjoint_relation_proof_2} and using that $G^{\partial}_{\para}$ is self-adjoint with respect 
    to $a_{\os{},\para_0}^{\partial}(\cdot, \cdot)$, which follows analogously to~\eqref{eq:adjoint_relation_proof_3}, we infer
    \begin{equation*}
        a_{\os{},\para_0}^{\partial}(G^{\partial}_{\para} \u^{\partial}, L^* \v) = a_{\os{},\para_0}^{\partial}(\u^{\partial}, L^{\ast}_{0} G_{\para} \v) = a_{\os{},\para_0}^{\partial}(G^{\partial}_{\para} \u^{\partial}, (G^{\partial}_{\para})^{-1} L^{\ast}_{0} G_{\para} \v).
    \end{equation*}
    Since this holds for all $\u^{\partial} \in \htraceZC{\os{}}$ and 
    $G^{\partial}_{\para}$ is invertible, we conclude~\eqref{eq:adjoint_relation}.
\end{proof}
 
For real $\para$, applying \cref{lem:adjoint_relation} with $L = \pu{\para}$ to the 
definition~\eqref{eq:def_operator_T} yields
\begin{equation}
    \label{eq:T_reformulated}
    T_{\para} = (G^{\partial}_{\para})^{-1} (\pu{\para})^{\ast}_{0} G_{\para} \pu_{\para},
    \qquad \para \in \Yloc.
\end{equation}
where $(\pu{\para})^{\ast}_{0}$ denotes the Hilbert adjoint of $\pu{\para}$ with 
respect to the fixed reference inner products.
We therefore define the 
holomorphic extension of $T$ by
\begin{equation}
    \label{eq:def_T_complex}
    T_{\para} := (G^{\partial}_{\para})^{-1} (\pu{\compconj{\para}})^{\ast}_{0} G_{\para} \pu_{\para},
    \qquad \para \in \widetilde{D},
\end{equation}
where $\widetilde{D}$ is the neighborhood of $\Yloc$ from \cref{prop:holomorphy_G_V}.
\begin{remark}
    The appearance of $\compconj{\para}$ in the adjoint factor is necessary 
    to ensure holomorphy. The Hilbert adjoint is conjugate-linear, so $\para 
    \mapsto (\pu{\para})^{\ast}_{0}$ is anti-holomorphic. Composing with the 
    conjugation $\para \mapsto \compconj{\para}$ restores holomorphy 
    (see \cref{lem:antidual_transpose_holomorphic}).
\end{remark}

\begin{proof}[Proof of \cref{thm:T_is_holomorphic}]
    By definition~\eqref{eq:def_T_complex}, 
    $T_{\para} = (G^{\partial}_{\para})^{-1} (\pu{\compconj{\para}})^{\ast}_{0} G_{\para} \pu_{\para}$.
    Each factor is holomorphic: $\pu$ by 
    \cref{prop:partition_of_unity_holomorphic}, 
    $G$ by \cref{prop:holomorphy_G_V}, 
    $\para \mapsto (\pu{\compconj{\para}})^{\ast}_{0}$ by 
    \cref{lem:antidual_transpose_holomorphic} (applied with respect to the fixed 
    reference inner products),
    and $\para \mapsto (G^{\partial}_{\para})^{-1}$ by \cref{prop:holomorphy_G_V}.
    Repeated application of the product rule yields the 
    holomorphy of $T$.
    For real $\para$, this definition agrees with~\eqref{eq:def_operator_T} 
    by \cref{lem:adjoint_relation}, so $T$ is indeed a holomorphic extension 
    of the originally defined operator.
\end{proof}
\subsection{Principal angles under linear isomorphisms}
\begin{proposition}
    \label{prop:principal_angles_linear_isomorphism}
    Let $(V, (\cdot, \cdot)_V)$ and $(W, (\cdot, \cdot)_W)$ be two Hilbert spaces. Let $L: V\to W$ be a linear 
    isomorphism. For two $k$-dimensional subspaces $V_1, V_2 \subset V$ the principal angles 
    $\theta_j(V_1, V_2)$ between $V_1$ and $V_2$ and the principal angles
    between $L(V_1)$ and $L(V_2)$, denoted by $\theta_j(L(V_1), L(V_2))$, satisfy
    $$
        \frac{1}{\kappa(L)} \sin(\theta_j(V_1, V_2))
        \leq \sin(\theta_j(L(V_1), L(V_2)))
        \leq \kappa(L) \sin(\theta_j(V_1, V_2)),
    $$
    where $\kappa(L) := \|L\|\ \|L^{-1}\|$ is the condition number of $L$.
\end{proposition}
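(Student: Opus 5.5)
The plan is to use the characterization of the largest angle $\theta_k$ (and, via min-max, of every $\theta_j$) through distances to subspaces, and to track how these distances transform under $L$. By symmetry (apply the same bound to $L^{-1}\colon W\to V$, noting $\kappa(L^{-1})=\kappa(L)$), it suffices to prove the upper bound $\sin\theta_j(L(V_1),L(V_2))\le\kappa(L)\sin\theta_j(V_1,V_2)$. Throughout, $\orthProj{V_2}$ and $\orthProj{L(V_2)}$ denote the orthogonal projections in $V$ and $W$, respectively.

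The first step is a min-max formula for the sines. By \cite[Theorem~4.3]{knyazev2002principal}, the numbers $\sin\theta_j(V_1,V_2)$ in increasing order are the singular values of the operator $(I-\orthProj{V_2})|_{V_1}\colon V_1\to V$. Courant--Fischer then gives
\begin{equation*}
\sin\theta_j(V_1,V_2)=\min_{\substack{S\subset V_1\\ \dim S=j}}\ \max_{\substack{v\in S\\ v\neq 0}}\frac{\operatorname{dist}_V(v,V_2)}{\|v\|_V},
\end{equation*}
and the analogous formula holds in $W$ for $L(V_1),L(V_2)$. Since $L$ maps $j$-dimensional subspaces of $V_1$ bijectively onto $j$-dimensional subspaces of $L(V_1)$, it is enough to prove the pointwise estimate
\begin{equation*}
\frac{\operatorname{dist}_W(Lv,L(V_2))}{\|Lv\|_W}\le\kappa(L)\frac{\operatorname{dist}_V(v,V_2)}{\|v\|_V},\qquad v\in V_1\setminus\{0\}.
\end{equation*}

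The pointwise estimate follows from two elementary bounds. For the numerator, the vector $L\orthProj{V_2}v$ lies in $L(V_2)$, so
\begin{equation*}
\operatorname{dist}_W(Lv,L(V_2))\le\|L(v-\orthProj{V_2}v)\|_W\le\|L\|\operatorname{dist}_V(v,V_2).
\end{equation*}
For the denominator, $\|Lv\|_W\ge\|L^{-1}\|^{-1}\|v\|_V$. Dividing the first bound by the second gives the claim with constant $\|L\|\,\|L^{-1}\|=\kappa(L)$. Taking the max over $S$ and then the min over $S$ preserves the inequality, which yields the upper bound; the lower bound follows by symmetry as noted above.

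The main obstacle is justifying the min-max formula for the $j$-th angle for all $j$, rather than only for the largest one. One must make sure the ordering convention of the paper ($\sigma_1\ge\cdots\ge\sigma_k$, so $\theta_1\le\cdots\le\theta_k$) matches increasing singular values of $(I-\orthProj{V_2})|_{V_1}$. This holds because $\|(I-\orthProj{V_2})v\|^2=\|v\|^2-\|\orthProj{V_2}v\|^2$, so the singular values of this operator on $V_1$ are $\sqrt{1-\sigma_i^2}=\sin\theta_i$, and Courant--Fischer on the finite-dimensional space $V_1$ applies. In fact only the case $j=k$ is used later in the paper, and there the formula reduces to a plain maximum over $V_1$, so this step is easy in that case.
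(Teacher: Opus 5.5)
Your proposal is correct and follows essentially the same route as the paper. Both arguments use the Knyazev/Courant--Fischer min-max formula $\sin\theta_j=\min_{S}\max_{u\in S}\operatorname{dist}(u,V_2)/\|u\|$, the pointwise bound $\operatorname{dist}(Lu,L(V_2))/\|Lu\|\le\|L\|\,\|L^{-1}\|\operatorname{dist}(u,V_2)/\|u\|$, and symmetry via $L^{-1}$. The differences are minor: you work directly in $V$ and $W$ rather than first reducing to $\R^m$ with Gram matrices, and you check the ordering convention explicitly, which the paper leaves implicit.
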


\begin{proof}
    Since $V_1$ and $V_2$ are contained in a finite-dimensional subspace of 
    $V$, we can assume without loss of generality that $V$ and $W$ are equal to 
    $\R^m$ for some $m\in \mathbb{N}$ with scalar products $(\cdot, \cdot)_V$ and $(\cdot,
    \cdot)_W$ 
    represented by positive definite matrices $M_V$ and $M_W$, respectively. 
    Let $Q_1$ and $Q_2$ be matrices whose columns form orthonormal bases with respect to
    $(\cdot, \cdot)_V$ of $V_1$ and $V_2$, respectively. 
    We use the variational characterization of the sines of the 
    principal angles \cite[Theorem 4.3]{knyazev2002principal} 
    \begin{equation*}
        \sin \theta_j(V_1, V_2)
        = \sigma_j^{\uparrow} (M_V^{1/2}(1 - \orthProj{V_2}) Q_1), 
    \end{equation*}
    where $\sigma_j^{\uparrow}$ denotes the $j$-th (sorted ascending) singular value and $\orthProj{V_i}$ the orthogonal projection onto $V_i$. Using the min-max characterization of 
    the singular values of $(1 - \orthProj{V_2}) \orthProj{V_1}$, we get 
    \begin{align*}
        \sin \theta_j(V_1, V_2)
        = \min_{S\subset V_1, \dim(S) = j} \max_{u\in S, \| u\|_V = 1}
        \| (1 - \orthProj{V_2}) u \|_V
        = \min_{S\subset V_1, \dim(S) = j} \max_{u\in S, \| u\|_V = 1}
        \mathrm{dist}(u, V_2), 
    \end{align*}
    where $\mathrm{dist}(u, V_2) = \min_{v\in V_2} \| u - v\|_V$. Fixing $S\subset V_1$ with
    $\dim(S) = j$,
    $u\in S$ with $\|u\|_V = 1$ and setting $w = L u / \|Lu\|_W$, we have 
    \begin{align*}
        \mathrm{dist}(w, L(V_2))
        &= \min_{v\in V_2} \|\frac{Lu}{\|Lu\|_W} - Lv \|_W 
        = \frac{1}{\|Lu\|_W} \min_{v\in V_2} \| L(u - v) \|_W \\
        &\leq \frac{\|L\|}{\|Lu\|_W} \min_{v\in V_2} \| u - v \|_V
        \leq \|L\|\|L^{-1}\| \min_{v\in V_2} \| u - v \|_V.
    \end{align*}
    Taking the maximum over $u\in S$ and the minimum over $S\subset V_1$ with $\dim(S) = j$, we obtain
    \begin{align*}
        \sin \theta_j(L(V_1), L(V_2))
        &= \min_{S \subset L(V_1), \dim(S) = j} \max_{w\in S, \|w\|_W = 1}
        \mathrm{dist}(w, L(V_2)) \\
        &= \min_{S \subset V_1, \dim(S) = j} \max_{w\in L(S), \|w\|_W = 1}
        \mathrm{dist}(w, L(V_2)) \\
        &= \min_{S \subset V_1, \dim(S) = j} \max_{u\in S, \|u\|_V = 1}
        \mathrm{dist}(\frac{Lu}{\|Lu\|_W}, L(V_2)) \\
        &\leq \|L\|\|L^{-1}\| \min_{S \subset V_1, \dim(S) = j} \max_{u\in S, \|u\| = 1} \mathrm{dist}(u, V_2) \\
        &\leq \|L\|\|L^{-1}\| \sin \theta_j(V_1, V_2).
    \end{align*}
    Applying the same argument with $L^{-1}$ and the spaces $L(V_1)$ and $L(V_2)$, we get 
    \begin{equation*}
        \sin \theta_j(V_1, V_2)
        \leq \|L\|\|L^{-1}\| \sin \theta_j(L(V_1), L(V_2))
    \end{equation*}
    yielding both inequalities.
\end{proof}

\subsection{Analyticity of the exponential map}

\begin{proof}[Proof of \cref{prop:properties_grassmannian}]
    \label{proof:properties_grassmannian}
    To show this, we use the characterisation of the geodesic vector field in 
    \cite[VII, §6, Theorem 3]{Lang_1985} and trace where analyticity has to enter. 
    We let $M := \GrR_k(V)$ and let $E$ denote the model space of $M$, a real
    Hilbert space with inner product $(\cdot, \cdot)_E$. By
    \cref{remark:analytic_manifolds}, $M$ is a real analytic Riemannian
    manifold. Given a chart of $M$ with image $B \subset E$, we
    identify the tangent bundle over the chart domain with $B \times E$
    and, following
    \cite[Chapter~VII, \S 6]{Lang_1985}, represent the Riemannian metric
    $\langle \cdot, \cdot \rangle_x$ by
    the map $g \colon B \to \linop{E}$ with 
    $\langle y, w \rangle_x = (y, g(x) w)_E$. The map $g$ is real analytic by
    \cref{remark:analytic_manifolds}, and hence so is its Fr\'echet
    derivative $dg \colon B \to \linop{E}{\linop{E}}$ by
    \cite[p.~1078]{whittlesey1965analytic}.

    Let us denote by $F$ the geodesic vector field on $TM$. 
    By the last display in the proof of \cite[VII, §6, Theorem~3]{Lang_1985} 
    its local representation $f
    = (f_1, f_2) \colon B \times E \to E \times E$ satisfies $f_1(x,v) = v$
    and
    \begin{equation}
        \label{eq:f2_identity}
        \big( y, g(x) f_2(x,v) \big)_E
        = \tfrac{1}{2} \big( (d_x g(y))v, v \big)_E
        - \big( (d_x g(v))v, y \big)_E
        \qquad \text{for all } y \in E.
    \end{equation}

    Let $T \colon \linop{E}{\linop{E}} \times E \times E \to E$ assign to
    $(A, v, w)$ the Riesz representative of the bounded linear functional
    \begin{equation*}
        y \mapsto \tfrac{1}{2}\big((Ay)v, w\big)_E - \big((Av)w, y\big)_E,
    \end{equation*}
    so that $T$
    is bounded trilinear with $\|T(A,v,w)\|_E \leq \tfrac{3}{2}
    \|A\| \|v\|_E \|w\|_E$, and hence real analytic by
    \cite[I, §3 Proposition~14]{Lang_1985}. By
    \eqref{eq:f2_identity}, $g(x) f_2(x,v) = T(d_xg, v, v) =: \Psi(x,v)$,
    and $\Psi$ is real analytic as a composition of real analytic maps.
    
    To conclude that $f_2$ is real analytic, it remains to show that the map 
    $
        B \to \linop{E}
        , x \mapsto (g(x))^{-1} 
    $
    is real analytic.
    To see this, we apply \cite[Lemma 1.2.25]{zech2018sparse} to the analytic map
    $g$. We fix
    $x\in B$. In this
    lemma we choose $\gamma >0$, where 
    $
        \inf\limits_{v\in E} \frac{(v,g(x)v)_E}{\|v\|^2_E} \geq 1/\gamma.
    $
    Such a $\gamma$ exists since $g$ is a Riemannian metric. 
    Therefore, we have 
    \begin{align*}
        \| (g(x))^{-1} \|_{\linop{E}}
        = \frac{1}{\inf_{v\in E} \frac{\| g(x)(v) \|_{E}}{\| v \|_E}}
        = \frac{1}{\inf_{v\in E}\sup_{w\in E}\frac{|(v, g(x)w)_E|}{\|w\|_E \| v \|_E }} 
            \leq \frac{1}{\inf_{v\in E}\frac{|(v, g(x)v)_E|}{\|v\|_E^2}}
        \leq \gamma. 
    \end{align*}
    Thus, by \cite[Lemma 1.2.25]{zech2018sparse}, the inversion map
    $\mathrm{inv}: \mathcal{B}_{\gamma^{-1}}^{\linop{E}}(g(x)) \to \linop{E}$ is
    analytic, where $\mathcal{B}_{\gamma^{-1}}^{\linop{E}}(g(x))$ denotes the
    open ball of radius $\gamma^{-1}$ centered at $g(x)$ in $\linop{E}$. Now
    let $\widetilde{B}\subset B$ be a neighborhood of $x$ such that we have
    $g(y) \in \mathcal{B}_{\gamma^{-1}/2}^{\linop{E}}(g(x))$ for all $y \in
    \widetilde{B}$. Thus, $\mathrm{inv} \circ g$ is analytic on $\widetilde{B}$
    as the composition of analytic maps. We conclude that $f_2$ is real analytic
    and hence $F$ is real analytic. Now in \cite[IV, §2, Theorem
    5]{Lang_1985}, it is shown that the flow of a $C^{p}$ vector
    field is $C^{p}$, using the Local Smoothness Theorem in IV, §1, which can
    be replaced by the real analytic version \cite[Theorem
    4.D]{zeidler1986_nonlinear} to conclude that the flow $\Theta$ of $F$ is
    real analytic. By \cite[Proposition~2.4]{biliotti2014properly} $M$ is
    geodesically complete. Hence the domain of the flow $\Theta$ contains $\{1\}
    \times TM$. For $v\in T_p M$, the exponential map is given by $\exp_p(v) =
    \pi(\Theta(1,(p,v)))$ with the real analytic projection $\pi \colon TM \to
    M$. Hence $\exp_p$ is real analytic on $T_p M$.
    Real analyticity of the logarithm map follows from real analyticity of the exponential map and the differentiation rule 
    for inverse mappings \cite[Corollary 4.37]{zeidler1986_nonlinear}.
\end{proof}

\subsection{Toponogov's theorem for Hilbert manifolds}
\label{sec:toponogov}

A geodesic triangle in a Hilbert manifold $M$ consists of three unit speed geodesics
$(\gamma_1, \gamma_2, \gamma_3)$ of lengths $(l_1, l_2, l_3)$ 
such that $\gamma_i(l_i) = \gamma_{i+1}(0)$ and $l_i + l_{i+1} > l_{i+2}$ for $i=1,2,3$, where 
the indices are taken modulo $3$. The angle between $-\dot{\gamma}_{i+1}(l_{i+1})$ and 
$\dot{\gamma}_{i+2}(0)$ is denoted by $\alpha_i$.

Let $\gamma_1$ and $\gamma_2$ be two unit speed geodesics in $M$ with lengths 
$l_1$ and $l_2$ such that $\gamma_1(l_1) = \gamma_2(0)$ and $\alpha$ the angle 
between $-\dot{\gamma}_1(l_1)$ and $\dot{\gamma}_2(0)$. We call $(\gamma_1, \gamma_2, \alpha)$
a hinge. 

We call a Hilbert manifold $M$ a Hopf-Rinow manifold if it is complete and 
for every $p,q \in M$ there is at least one minimal geodesic joining 
$p$ and $q$. 
We then have \cite[Theorem 17]{biliotti2006some}: 
\begin{theorem}
    \label{thm:toponogov}
    Let $M$ be a Hopf-Rinow manifold such that its sectional curvature 
    $K^M$ satisfies $H\leq K^M \leq \Delta$ for some $H, \Delta \in \mathbb{R}$.\\
    (A) Let $(\gamma_1, \gamma_2, \gamma_3)$ be a geodesic triangle in $M$. Assume 
    $\gamma_1$ and $\gamma_3$ are minimal geodesics and if $H>0$, $l_2\leq \pi/\sqrt{H}$. 
    Then there exists a geodesic triangle 
    $(\overline{\gamma}_1, \overline{\gamma}_2, \overline{\gamma}_3)$
    in a simply connected $2$-dimensional manifold $M_H$ of constant curvature $H$ such that 
    \begin{equation*}
        l_i = \overline{l}_i, \quad i=1,2,3, \quad \overline{\alpha}_1 \leq \alpha_1, \quad \overline{\alpha}_2 \leq \alpha_2.
    \end{equation*} 
    Except for the case $H>0$ and $l_2=\pi/\sqrt{H}$, 
    the triangle $(\overline{\gamma}_1, \overline{\gamma}_2, \overline{\gamma}_3)$ is
    uniquely determined.\\
    (B) Let $(\gamma_1, \gamma_2, \alpha)$ be a hinge in $M$. 
    Let $\gamma_1$ be a minimal geodesic, and if $H>0$, $l_2\leq \pi/\sqrt{H}$.
    Let $(\overline{\gamma}_1, \overline{\gamma}_2, \overline{\alpha})$ be a hinge in $M_H$ such that 
    $l_i = \overline{l}_i, i=1,2$ and $\alpha = \overline{\alpha}$. Then
    \begin{equation*}
        d_M(\gamma_1(0), \gamma_2(l_2)) \leq d_{M_H}(\overline{\gamma}_1(0), \overline{\gamma}_2(l_2)).
    \end{equation*}
\end{theorem}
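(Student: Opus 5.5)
The plan is to transfer the classical finite-dimensional proof of Toponogov's theorem, in the form of Cheeger--Ebin or Karcher's distance-comparison argument, to a Hopf--Rinow Hilbert manifold $M$. The idea is to isolate exactly where finite dimensionality or local compactness is used and to replace each such step by a Hilbert-manifold substitute. I would prove (B) first and then derive (A) from it by the usual reduction. Given a triangle with $\gamma_1,\gamma_3$ minimal, apply (B) to the hinges at the two vertices adjacent to the minimal sides. Together with the law of cosines monotonicity in the model plane $M_H$, this yields a comparison triangle with $\bar l_i=l_i$ and $\bar\alpha_1\le\alpha_1$, $\bar\alpha_2\le\alpha_2$. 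Uniqueness, away from the case $H>0$, $l_2=\pi/\sqrt H$, is then a statement purely about $M_H$.

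For (B) the key analytic tool is the Rauch comparison theorem, specifically the Hessian comparison for distance functions. Along a geodesic without conjugate points, Jacobi fields in $M$ are compared with those in $M_H$ using only the lower bound $K^M\ge H$. The Jacobi equation is an ODE in the Hilbert space $T_pM$, so existence, uniqueness and the index-form argument carry over verbatim. I would verify that the comparison of index forms needs only the Riemannian-metric inequality $\langle R(J,\dot\gamma)\dot\gamma,J\rangle\ge H|J|^2$ and not a finite orthonormal frame. The upper bound $\Delta$ enters here: in infinite dimensions conjugate points may be only \emph{monoconjugate} or \emph{epiconjugate}, and $\exp$ can fail to be a local diffeomorphism without being singular. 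The bound $K^M\le\Delta$ guarantees, via Rauch applied in the other direction, that $d\exp_p$ is an isomorphism on the ball of radius $\pi/\sqrt\Delta$. Consequently the distance function $r=d_M(\gamma_1(0),\cdot)$ is smooth near short segments, and the comparison $\mathrm{Hess}\,r\le \mathrm{Hess}\,\bar r$ holds in the support sense along minimal geodesics.

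Next I would prove the \emph{thin} (local) hinge version. If all lengths are small compared with $\pi/\sqrt\Delta$, compare $\rho(t)=f_H(r(\gamma_2(t)))$ with its model counterpart, where $f_H$ is the model function whose Hessian is a multiple of the metric, for example $f_0(r)=r^2/2$. This gives a differential inequality $\rho''+H\rho\le 1$ in the barrier sense, with the same initial data as the model. An ODE comparison then yields $d_M(\gamma_1(0),\gamma_2(l_2))\le d_{M_H}(\bar\gamma_1(0),\bar\gamma_2(l_2))$. Finally, globalize by subdividing $\gamma_2$ into short pieces, joining the subdivision points to $\gamma_1(0)$ by minimal geodesics, and gluing the comparison triangles with Alexandrov's lemma. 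The existence of these minimal geodesics is exactly the Hopf--Rinow hypothesis; it replaces the Arzel\`a--Ascoli argument that is unavailable in infinite dimensions.

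I expect the main obstacle to be this globalization step. The classical inductive argument (Karcher's ``long triangles'' or the Grove--Petersen induction on the size of triangles) uses compactness to obtain a uniform thinness radius and to pass to limits of minimal geodesics. In a Hilbert manifold, minimal geodesics need not vary continuously and balls are not compact. I would first try to obtain a uniform radius from the curvature bounds alone, using $\pi/(2\sqrt\Delta)$ via Rauch as above. I would then run the induction only along the fixed compact curve $\gamma_2([0,l_2])$, choosing minimal geodesics pointwise and never taking limits of them. If that fails, the fallback is to reduce to finite dimensions, as the paper does elsewhere via totally geodesic finite-dimensional submanifolds, although such reductions exist for the Grassmannian but not for general $M$.
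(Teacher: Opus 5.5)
The paper gives no proof of this statement. It quotes it from \cite[Theorem 17]{biliotti2006some} and later uses only part (B) with $H=0$. Your proposal therefore has to stand on its own, and as written it does not close, precisely at the step you flag.

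\textbf{The gap.} Your local argument gives the hinge inequality only when \emph{all} lengths are small compared with $\pi/\sqrt{\Delta}$, since that is where you exclude mono- and epiconjugate points. Subdividing $\gamma_2$ and joining the subdivision points to $\gamma_1(0)$ does not produce such hinges. It produces long thin triangles, each with one short side and two minimal sides of length about $d_M(\gamma_1(0),\gamma_2(t))$, which can be far larger than $\pi/\sqrt{\Delta}$. Your thin-hinge lemma does not apply to these, so Alexandrov's lemma has nothing to glue. Handling them would require the barrier inequality for $r$ at points far from $\gamma_1(0)$, where the upper bound no longer controls conjugate points along the minimal geodesic.

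A separate point: nonsingularity of $d\exp_p$ does not make $r$ smooth near short segments. Cut points also come from non-injectivity of $\exp_p$, already on a thin flat torus, so only a support-sense statement is available.

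\textbf{The missing idea.} The inequality $\rho''+H\rho\le 1$ you wrote down holds along all of $\gamma_2$ if the barriers come from second variation rather than from support functions or subtriangles. Choosing minimal geodesics pointwise along $\gamma_2$, as in your fallback, is exactly right; they just should not be used as sides of subtriangles.

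Fix $t\in[0,l_2]$ and a minimal geodesic $\sigma$ from $p=\gamma_1(0)$ to $\gamma_2(t)$, of length $r_t$; it exists by the Hopf--Rinow hypothesis. Let $c=\langle\dot\gamma_2(t),\dot\sigma(r_t)\rangle$, and let $E$ be parallel along $\sigma$ with $E(r_t)=\dot\gamma_2(t)-c\,\dot\sigma(r_t)$. Set $V(s)=\frac{\mathrm{sn}_H(s)}{\mathrm{sn}_H(r_t)}E(s)+\frac{s}{r_t}\,c\,\dot\sigma(s)$.

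The curves $s\mapsto\exp_{\sigma(s)}(hV(s))$ join $p$ to $\gamma_2(t+h)$, so their lengths satisfy $L(h)\ge r(\gamma_2(t+h))$. The first and second variation formulas hold verbatim in Hilbert manifolds, and the boundary terms vanish because $h\mapsto\exp_{\sigma(s)}(hV(s))$ are geodesics. Together with $K^M\ge H$ they give $(f_H\circ L)''(0)\le\mathrm{cn}_H(r_t)=1-Hf_H(r_t)$, where $f_H'=\mathrm{sn}_H$ and $f_H(0)=0$. Hence $\phi=f_H\circ r\circ\gamma_2$ satisfies $\phi''+H\phi\le 1$ in the barrier sense on all of $[0,l_2]$.

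Using $\gamma_1$ itself at $t=0$ gives $\phi(0)=f_H(l_1)$ and a barrier of slope $-f_H'(l_1)\cos\alpha$. Sturm comparison with $f_H(\bar d(t))$, where $\bar d$ is the model hinge distance, then yields (B) in one step, using that $f_H$ is increasing. For $H>0$, run the argument with $H'<H$ and let $H'\uparrow H$. This keeps $\mathrm{sn}_{H'}(r_t)>0$, because minimal geodesics have length at most $\pi/\sqrt{H}$ by the same second-variation argument. The whole route uses only $K^M\ge H$ and the existence of minimal geodesics; $\Delta$ plays no role.

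\textbf{A caveat on (A).} Applying (B) at the two endpoints of $\gamma_2$ controls the angles there. With the paper's indexing ($\alpha_i$ between $-\dot\gamma_{i+1}(l_{i+1})$ and $\dot\gamma_{i+2}(0)$), these are $\alpha_3$ and $\alpha_1$, not $\alpha_2$. The angle $\alpha_2$ between the two minimal sides can genuinely violate $\bar\alpha_2\le\alpha_2$. On a flat cylinder, take $\gamma_1,\gamma_3$ short segments from a point and let $\gamma_2$ go the long way around. Then $\alpha_2$ is the Euclidean angle of a triangle whose third side is $d_M(\gamma_2(0),\gamma_2(l_2))<l_2$, so $\bar\alpha_2>\alpha_2$.

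Your reduction therefore proves (A) in its correct form, with the angles at the endpoints of $\gamma_2$. For $H>0$ it also needs the perimeter bound $l_1+l_2+l_3\le 2\pi/\sqrt{H}$ for the comparison triangle to exist, which likewise follows from (B).
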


\printbibliography
\end{document}